\title{Zero-density estimates for Epstein zeta functions \footnote{Research of the first author was partially supported by NSF grant DMS-1200582. Research of the second author was supported by the Incheon National University Research Grant in 2014.}}
\begin{document}
\newcommand\tabcaption{\def\@captype{table}\caption}
\newtheorem{thrm}{Theorem}
\newtheorem*{thmA}{Theorem A}
\newtheorem*{thmB}{Theorem B}
\newtheorem{cor}[thrm]{Corollary}
\newtheorem{lemma}{Lemma}
\newtheorem{prop}{Proposition}
\newtheorem{conj}{Conjecture}
\newtheorem{defn}{Definition}
\newtheorem*{ack}{Acknowledgement}
\newtheorem*{key}{Keywords}
\newtheorem*{msc}{Mathematics Subject Classification (2010)}
\newcommand\be{\begin{equation}}
\newcommand\ee{\end{equation}}

\numberwithin{equation}{section}

\def\k{{k_0}}
\def\lin{Lindel\"{o}f }
 \def\s{\text{\rm sign}}
\def\u{\mathbf u}
\def\v{\mathbf v}
 \def\A{\mathcal A}
 \def\B{\mathscr B}
 \def\D{\mathbf D}
  \def\R{\mathscr R}
 \renewcommand{\S}{\mathcal S}
 \newcommand{\I}{\mathcal I}
  \newcommand{\J}{\mathcal J}
  \def\sL{\mathscr L}
\def\E{\mathbb E}
\def\L{\mathbf L}
\def\P{\mathbb P}
\def\m{\mathbf m}
\def\n{\mathbf n}
\def\w{\mathbf w}
\def\z{\mathbf z}
\def\a{\mathbf a}
\def\s{\sigma}
\def\bysame{\leavevmode\hbox to3em{\hrulefill}\thinspace}

\author{Steven Gonek  and Yoonbok Lee }

\parskip=12pt

 \maketitle 
 \pagestyle{myheadings}

 \begin{abstract}
  We investigate the zeros of Epstein zeta functions associated with a positive definite quadratic form with rational coefficients in the vertical strip $ \sigma_1 < \Re s <  \sigma_2 $, where $ 1/2 < \sigma_1 < \sigma_2 < 1 $. When the class number of the quadratic form is bigger than 1, Voronin gives a lower bound and Lee gives an asymptotic formula for the number of zeros. In this paper, we improve their results by providing a new upper bound for the error term.
 \end{abstract}

 

\begin{msc}
11M26, 11M41.
\end{msc}


\section{Introduction and statement of results}


Let $ Q(m,n) = am^2 + bmn + cn^2 $ be a positive definite quadratic form with $ a,b,c \in \mathbb{Z}$ and $ D= b^2 - 4ac <0$. Let $s=\sigma+it$ be a complex variable. The Epstein zeta function associated with $Q$ is defined by 
$$ E(s,Q) = {\sum_{m,n}}'     \frac{1}{ Q(m,n)^s }  $$
for $ \sigma > 1 $, where the sum is over all integers $m,n$ not both zero. It has a meromorphic continuation to $\mathbb{C}$ with a simple pole at $s=1$ and it satisfies the functional equation
\begin{equation}\label{func eqn}
\Psi(s , Q) = \Psi (1-s, Q),
\end{equation}
where
$$ \Psi(s,Q) : = \left( \frac{  \sqrt{-D}}{2 \pi} \right)^s \Gamma (s) E(s,Q)  . $$

In this paper we study the distribution of the zeros of $E(s,Q)$ in the right half of the critical strip, $1/2<\sigma<1$. This distribution is different depending on whether 
 the   class number $h(D)$ of $ \mathbb{Q}(\sqrt{D})$ is $1$ or is greater than $1$. 
 If $h(D) = 1 $, then  
$$ E(s,Q) = w_D \zeta_K (s) ,$$
where $ w_D$ is the number of roots of unity in $K = \mathbb{Q}(\sqrt{D})$ and $ \zeta_K$ is the Dedekind zeta function of $K$. Hence, in this case we  expect  $E(s,Q)$ to satisfy the analogue of the Riemann hypothesis. However, if $h(D) >1 $, Davenport and Heilbronn \cite{DH} proved that $E(s,Q)$ has infinitely many zeros on $\sigma > 1 $.
For $ 1/2 < \sigma_1 < \sigma_2  $ set
\be\notag
N_E(\s_1, \s_2; T) =\sum_{\substack{T\leq  \gamma \leq 2T \\ \s_1<\beta\leq \s_2}}1,
\ee
where $\rho=\beta+i\gamma$ denotes a generic zero of $E(s,Q)$.
Voronin \cite{V} proved the following theorem.
 
\begin{thmA}[Voronin]\label{V thrm}
Let $ Q$ be a quadratic form with integer coefficients whose discriminant is $D<0$. Suppose that the class number $h(D)$ is greater than $1$.  Then for any $ \sigma_1 $ and $ \sigma_2$ with $ 1/2 < \sigma_1 < \sigma_2 < 1 $, 
$$
N_E(\s_1, \s_2; T) \geq cT,
$$
where $ c = c( \sigma_1, \sigma_2 , Q)>0$ is independent of $T$.
\end{thmA}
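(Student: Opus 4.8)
The plan is to write $E(s,Q)$ as a nontrivial finite linear combination of Hecke $L$-functions and then, using the joint universality of those $L$-functions, to exhibit one zero of $E(s,Q)$ inside a fixed small disc translated to the height $\tau$, for every $\tau$ in a set of heights of measure $\gg T$; counting these zeros gives the bound.

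\emph{Reduction.} Let $h=h(D)$ and let $A$ be the ideal class of $K=\mathbb{Q}(\sqrt{D})$ attached to $Q$. One has the classical identity
\[
E(s,Q)=\frac{w_D}{h}\sum_{\chi}\overline{\chi(A)}\,L(s,\chi),
\]
where $\chi$ runs over the $h$ ideal class characters of $K$ and $L(s,\chi)$ is the corresponding Hecke $L$-function (an Euler product over prime ideals). The trivial character $\chi_0$ contributes $\zeta_K(s)=\zeta(s)L(s,\chi_D)$, and because $h>1$ at least one nontrivial $\chi$ occurs in the sum; this is the only place the hypothesis $h(D)>1$ is used. Each $L(s,\chi)$ is holomorphic and of finite order in the strip $1/2<\sigma<1$, and the analytic ingredient I will use is the \emph{joint universality} of the family $\{L(s,\chi)\}_{\chi}$ there: for any closed disc $\overline{D}\subset\{1/2<\sigma<1\}$, any functions $f_\chi$ that are continuous and zero-free on $\overline{D}$ and holomorphic in its interior, and any $\varepsilon>0$, the set of $\tau\in[T,2T]$ satisfying $\max_{\chi}\max_{s\in\overline{D}}|L(s+i\tau,\chi)-f_\chi(s)|<\varepsilon$ has measure $\gg_{\varepsilon,D}T$. (This is Voronin's joint universality theorem in the form needed for ideal class characters; its proof uses the independence of the Euler factors attached to distinct $\chi$ together with the $\mathbb{Q}$-linear independence of $\{\log p\}$.)

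\emph{Producing a single zero.} Fix $s_0$ with $\Re s_0\in(\sigma_1,\sigma_2)$ and $r>0$ so small that $[\Re s_0-r,\Re s_0+r]\subset(\sigma_1,\sigma_2)$, and put $\overline{D}=\{|s-s_0|\le r\}$. I first choose zero-free holomorphic functions $f_\chi$ on $\overline{D}$ with $\frac{w_D}{h}\sum_{\chi}\overline{\chi(A)}f_\chi(s)=s-s_0$. Indeed, since at least one nontrivial character is present, one may pick constants $c_\chi$ ($\chi\ne\chi_0$) with $|K_0|>\frac{h}{w_D}r$, where $K_0:=\sum_{\chi\ne\chi_0}\overline{\chi(A)}c_\chi$; setting $f_\chi=c_\chi$ for $\chi\ne\chi_0$ and $f_{\chi_0}(s)=\frac{h}{w_D}(s-s_0)-K_0$, the function $f_{\chi_0}$ vanishes only at the point $s_0+\frac{w_D}{h}K_0$, which lies outside $\overline{D}$, and the required identity holds. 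Applying joint universality with some $\varepsilon<r/w_D$ then yields a set $\mathcal{T}\subset[T+1,2T-1]$ of measure $\gg T$ such that for $\tau\in\mathcal{T}$ one has $\max_{s\in\overline{D}}|E(s+i\tau,Q)-(s-s_0)|<w_D\varepsilon<r$; since $|s-s_0|=r$ on $\partial D$, Rouch\'e's theorem shows that $s\mapsto E(s+i\tau,Q)$ has exactly one zero in $D$, that is, $E(s,Q)$ has a zero $\rho$ with $|\rho-(s_0+i\tau)|<r$, whence $\sigma_1<\Re\rho<\sigma_2$ and $T<\Im\rho<2T$.

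\emph{Counting and the main obstacle.} As $\mathrm{meas}\,\mathcal{T}\gg T$, one may extract $\gg T$ points of $\mathcal{T}$ that are pairwise at distance $\ge 2r$; the corresponding discs $D+i\tau$ are disjoint, so the zeros obtained are distinct, and hence $N_E(\sigma_1,\sigma_2;T)\gg T$ with an implied constant depending only on $\sigma_1,\sigma_2,Q$. The entire difficulty is concentrated in the joint universality input, and more precisely in establishing it with an \emph{effective positive lower density} of admissible $\tau$ localized to the short interval $[T,2T]$ (rather than mere denseness of values); this is the substantive analytic work, and it is here that the arithmetic of $K$ enters. (An alternative, more ``quantitative'' route is to apply Littlewood's lemma to the rectangle $[\sigma_1,\sigma_2]\times[T,2T]$, which reduces Theorem~A to the strict inequality $m(\sigma_1)>m(\sigma_2)$ for $m(\sigma):=\lim_{T\to\infty}\frac1T\int_T^{2T}\log|E(\sigma+it,Q)|\,dt$. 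This mean is an affine function of $\sigma$ when $E$ is an Euler product, but not when $h>1$ and $E$ is a genuine linear combination; controlling the resulting curvature through the Bohr--Jessen limiting distribution of $E(\sigma+it,Q)$ is then the main obstacle.)
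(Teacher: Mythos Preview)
Your argument is correct and matches the route the paper attributes to Voronin (the paper does not give its own proof of Theorem~A, only this one-line summary): a joint value-distribution/universality result for the inequivalent Hecke $L$-functions in \eqref{lin com repn}, followed by Rouch\'e and a disjoint-disc count. The one refinement worth making explicit is the identity $L(s,\chi)=L(s,\overline\chi)$ noted in the paper, so joint universality is available only for the $J$ pairwise distinct functions $L_1,\dots,L_J$ and your targets must satisfy $f_\chi=f_{\overline\chi}$; this is easy to arrange (take the constants $c_\chi$ real, and note that at least one of the resulting coefficients $c_j$ with $j\ge 2$ is nonzero) and does not disturb the rest of the proof.
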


Recently, the second author \cite{Le2} improved this to an asymptotic formula.
\begin{thmB}[Lee]\label{thm: Lee}
Assume the same hypothesis as in Theorem A. Then  for
$ 1/2 < \sigma_1 < \sigma_2 $ we have
$$
N_E(\s_1, \s_2; T) = cT+o(T), 
$$ 
where  $ c = c( \sigma_1, \sigma_2, Q) \geq 0 $. If $ \sigma_1 \leq 1 $, then $ c>0$.
\end{thmB}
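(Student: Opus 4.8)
The plan is to reduce the problem, by way of Littlewood's lemma, to the asymptotic evaluation of the first moment $\int_T^{2T}\log|E(\sigma+it,Q)|\,dt$ on vertical lines in the strip, and to carry out that evaluation by the probabilistic theory of value distribution. Since $h(D)>1$, write $K=\mathbb Q(\sqrt D)$, let $\chi_1=\chi_0,\dots,\chi_h$ be the characters of the ideal class group of $K$, and suppose $Q$ lies in the class $A$; then
$$
E(s,Q)=\frac{w_D}{h}\sum_{j=1}^{h}\overline{\chi_j(A)}\,L_K(s,\chi_j)
$$
is a fixed nontrivial $\mathbb C$-linear combination of Hecke $L$-functions, all with conductor $|D|$ and the same gamma factor. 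By the functional equation \eqref{func eqn} and Phragm\'en--Lindel\"of, $E(s,Q)$ grows polynomially in $t$ in every strip $1/2<\sigma\le\sigma_0$, and is holomorphic there apart from the simple pole at $s=1$ coming from $L_K(s,\chi_0)=\zeta_K(s)$. Fix $b>1$ large enough that $E(s,Q)\ne0$ for $\Re s\ge b$; this is possible because $E(\sigma+it,Q)=r_Q(m_0)\,m_0^{-\sigma-it}\bigl(1+o(1)\bigr)$ as $\sigma\to\infty$, where $m_0$ is the least positive integer represented by $Q$ and $r_Q(m_0)$ its representation number.

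For $1/2<\sigma<b$ set $n(\sigma,T)=\#\{\rho=\beta+i\gamma:E(\rho,Q)=0,\ \beta\ge\sigma,\ T\le\gamma\le2T\}$ and $I(\sigma,T)=\int_T^{2T}\log|E(\sigma+it,Q)|\,dt$. Applying Littlewood's lemma to the rectangles $[\sigma,b]\times[T,2T]$ and $[\sigma+\delta,b]\times[T,2T]$ and subtracting gives, for any $\delta>0$,
$$
2\pi\int_\sigma^{\sigma+\delta}n(u,T)\,du=I(\sigma,T)-I(\sigma+\delta,T)+O(\log T),
$$
the $O(\log T)$ accounting for the argument of $E$ along the short horizontal segments (bounded since $E$ is holomorphic of polynomial growth near them, for $T$ large). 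Because $u\mapsto n(u,T)$ is nonincreasing, $\frac1\delta\int_\sigma^{\sigma+\delta}n(u,T)\,du\le n(\sigma,T)\le\frac1\delta\int_{\sigma-\delta}^{\sigma}n(u,T)\,du$. Thus it suffices to prove that $\frac1T I(\sigma,T)\to m(\sigma)$ as $T\to\infty$ for a function $m$ that is continuously differentiable (indeed real-analytic) on $(1/2,b)$: then the squeeze yields $n(\sigma,T)=-\frac{1}{2\pi}m'(\sigma)\,T+o(T)$, hence also $\#\{\rho:\beta=\sigma,\ T\le\gamma\le2T\}=o(T)$, and therefore $N_E(\sigma_1,\sigma_2;T)=n(\sigma_1,T)-n(\sigma_2,T)+o(T)=cT+o(T)$ with $c=\frac{1}{2\pi}\bigl(m'(\sigma_2)-m'(\sigma_1)\bigr)$. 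Necessarily $c\ge0$, since $N_E\ge0$; and if $\sigma_1\le1$ then $c>0$: for $\sigma_1<1$ this is immediate from Theorem A, since $N_E(\sigma_1,\sigma_2;T)\ge N_E(\sigma_1,\sigma';T)\gg T$ for any $\sigma'\in(\sigma_1,\min(\sigma_2,1))$, while the case $\sigma_1=1$ follows from the quantitative Davenport--Heilbronn phenomenon, which produces $\gg T$ zeros with $\beta>1$.

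It remains to evaluate $I(\sigma,T)$. The numbers $\log N\mathfrak p$ over prime ideals $\mathfrak p$ of $K$ are linearly independent over $\mathbb Q$, so as $t$ runs over $[T,2T]$ the point $(N\mathfrak p^{-it})_{\mathfrak p}$ becomes equidistributed on the infinite torus. The random Euler products $\prod_{\mathfrak p}(1-\chi_j(\mathfrak p)z_{\mathfrak p}N\mathfrak p^{-\sigma})^{-1}$, with $(z_{\mathfrak p})$ independent and uniform on the unit circle, converge almost surely for $\sigma>1/2$; hence for such $\sigma$ the law of $E(\sigma+it,Q)$, $t\in[T,2T]$, converges weakly as $T\to\infty$ to that of
$$
E(\sigma,\z):=\frac{w_D}{h}\sum_{j=1}^{h}\overline{\chi_j(A)}\prod_{\mathfrak p}\bigl(1-\chi_j(\mathfrak p)z_{\mathfrak p}N\mathfrak p^{-\sigma}\bigr)^{-1},
$$
a $\mathbb C$-valued random variable whose distribution has a bounded density near the origin (so that $\mathbb E\,\log|E(\sigma,\z)|$ is finite). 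For bounded continuous $F$ this gives $\frac1T\int_T^{2T}F\bigl(E(\sigma+it,Q)\bigr)\,dt\to\mathbb E\,F\bigl(E(\sigma,\z)\bigr)$; to pass to $F=\log|\cdot|$ one needs uniform-in-$T$ tail bounds, namely $\frac1T\int_T^{2T}|E(\sigma+it,Q)|^2\,dt\ll_\sigma1$ (controlling where $|E|$ is large) and, more delicately, $\frac1T\int_T^{2T}\bigl(\log^-|E(\sigma+it,Q)|\bigr)^2\,dt\ll_\sigma1$ (controlling where $|E|$ is small, near the zeros). These give $\frac1T I(\sigma,T)\to m(\sigma):=\mathbb E\,\log|E(\sigma,\z)|$, and the same machinery applied to the analytic random model shows that $m$ is real-analytic on $(1/2,b)$.

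The main obstacle is the $\log^-$ bound just mentioned --- equivalently, quantitative control of how often and how strongly $E(\sigma+it,Q)$ is small, i.e.\ of the clustering of its zeros near a fixed vertical line. Since $E(s,Q)$ has no Euler product and genuinely vanishes in $1/2<\sigma<1$, the classical devices for $\log|\zeta|$ are unavailable; the natural route is to dominate $\log^-|E(\sigma+it,Q)|$ by a sum over the zeros with $|\gamma-t|\le1$ and then estimate the mean square, for which a zero-counting input is needed. A priori only the crude estimate $n(\sigma,T)\ll T\log T$ (from Jensen's formula on a wide rectangle, using $|E|\ll T^{A}$) is available, and removing the spurious logarithm and the attendant circularity is the heart of the argument --- indeed it is exactly here that sharper zero-density estimates enter, and pushing them further is what yields the improved error term that is the main result of the present paper.
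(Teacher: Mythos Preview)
Your outline matches Lee's approach as the paper describes it: reduce via Littlewood's lemma to the Jensen function $\varphi(\sigma)=\lim_{T\to\infty}\frac1T\int\log|E(\sigma+it,Q)|\,dt$, identify the limit probabilistically, and differentiate. Two points, one technical and one substantive.

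First, the random model is mis-indexed. The numbers $\log N\mathfrak p$ are \emph{not} linearly independent over $\mathbb Q$: when a rational prime $p$ splits as $\mathfrak p_1\mathfrak p_2$ one has $N\mathfrak p_1=N\mathfrak p_2=p$, so $N\mathfrak p_1^{-it}=N\mathfrak p_2^{-it}$ identically. The correct model (as in the paper) attaches one uniform variable $X(p)$ to each rational prime $p$ and sets $L_j(\sigma,X)=\prod_{\mathfrak p}(1-\chi_j(\mathfrak p)X(p)N\mathfrak p^{-\sigma})^{-1}$ where $p\mid N\mathfrak p$. This is easily repaired but essential: with independent $z_{\mathfrak p}$ the limiting distribution is wrong.

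Second, and more seriously, your treatment of the $\log^-$ obstacle is a genuine gap. You propose to control $\log^-|E(\sigma+it,Q)|$ by summing over nearby zeros and then invoking a zero-count --- and you correctly note this threatens circularity, since zero-counting is exactly what we are trying to prove. The point is that no such circularity arises. The moment bound the paper uses (Lemma~\ref{lem: 2kth mmt real}), namely $\frac1T\int_T^{2T}\bigl|\log|E(\sigma+it,Q)|\bigr|^{2k}dt\ll(Ck)^{4k}$, is proved (following Proposition~2.5 of Lamzouri--Lester--Radziwi\l\l) by applying Jensen's formula on small disks of radius $\asymp 1$ centered on the line $\Re s=\sigma$; this uses only the convexity bound for $E$ and the crude fact that each such disk contains $O(\log T)$ zeros, which follows from Jensen's formula itself together with polynomial growth --- no zero-density input whatsoever. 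Equivalently, Lee's original argument in \cite{Le2} appeals to the Borchsenius--Jessen theory of Jensen functions for almost periodic functions, which handles the integrability of $\log|E|$ internally. Either way, the $\log^-$ control is an input to the zero-counting, not a consequence of it, and your sketch needs to be amended to reflect this.
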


Our main theorem provides an 
improvement of the error term. 
\begin{thrm}\label{main thrm}
Assume the same hypothesis as in Theorem A. 
If $ 1/2 < \sigma_1 < \sigma_2 $,
then there exists an absolute constant $b>0$ such that   
$$
N_E(\s_1, \s_2; T) =
cT+O(T\exp(- b \sqrt{\log \log T} )),
$$ 
 where $ c = c( \sigma_1, \sigma_2, Q) \geq 0 $. If $ \sigma_1 \leq 1 $, then $ c>0$.
\end{thrm}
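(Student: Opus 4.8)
The plan is to reduce the zero count to a mean value of $\log|E(\sigma+it,Q)|$ over $t\in[T,2T]$, to evaluate that mean value with error $\exp(-b\sqrt{\log\log T})$, and to recover the pointwise count by a monotonicity argument. Since $h(D)>1$, the first step is to decompose $E(s,Q)$ through the characters of the ideal class group: writing $\mathcal{A}$ for the class attached to $Q$, one has $E(s,Q)=\tfrac{w_D}{h}\sum_{\chi}\overline{\chi}(\mathcal{A})L(s,\chi)$, a nontrivial linear combination of the Hecke $L$-functions $L(s,\chi)$ of the class characters $\chi$ of $K=\mathbb{Q}(\sqrt D)$, all sharing the functional equation of $\zeta_K$ and, for $\chi$ non-principal, entire. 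Factoring $E(s,Q)=A\,n_1^{-s}\widetilde E(s)$ with $A\neq0$ and $n_1$ the least value represented by $Q$, so that $\widetilde E(s)\to1$ as $\Re s\to+\infty$, and applying Littlewood's lemma to $\widetilde E$ on rectangles $[\sigma,\sigma_3]\times[T,2T]$ together with the Backlund-type bound $O(\log T)$ for the argument of $\widetilde E$ on horizontal segments (after adjusting $T,2T$ by $O(1)$, which changes the count by $O(\log T)$), one obtains for $1/2<a<b$
$$2\pi\int_{a}^{b}N_E(u,\infty;T)\,du=\int_{T}^{2T}\log\bigl|E(a+it,Q)/E(b+it,Q)\bigr|\,dt+(a-b)\,T\log n_1+O(\log T),$$
where $N_E(u,\infty;T)$ counts zeros with $\beta>u$, $T\le\gamma\le2T$.

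The heart of the argument is the quantitative mean value
$$\int_{T}^{2T}\log|E(\sigma+it,Q)|\,dt=M(\sigma)\,T+O\bigl(T\exp(-b_0\sqrt{\log\log T})\bigr)\qquad(1/2<\sigma),$$
where $M(\sigma)=\int x\,d\nu_\sigma(x)$ is the mean of the Bohr--Jessen limiting distribution $\nu_\sigma$ of $\log|E(\sigma+it,Q)|$. To prove it one fixes a length $X=X(T)$ and a threshold $V\asymp\sqrt{\log\log T}$, and shows that outside an exceptional set $\mathcal E\subset[T,2T]$ of measure $\ll T\exp(-cV)$ each $\log L(\sigma+it,\chi)$ agrees up to $O(\exp(-cV))$ with the short Dirichlet polynomial $\sum_{p^k\le X}(\cdots)p^{-k(\sigma+it)}$; the exceptional set is controlled by a zero-density estimate for the $L(s,\chi)$ (bounding the measure of $t$ at which $L(\sigma+it,\chi)$ lies close to one of its zeros) together with power-moment bounds. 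Hence off $\mathcal E$ one has $\log|E(\sigma+it,Q)|=\Phi_X(t)+O(\exp(-cV))$ with
$$\Phi_X(t):=\log\Bigl|\tfrac{w_D}{h}\sum_{\chi}\overline{\chi}(\mathcal{A})\exp\Bigl(\textstyle\sum_{p^k\le X}(\cdots)p^{-k(\sigma+it)}\Bigr)\Bigr|,$$
and $\tfrac1T\int_T^{2T}\Phi_X(t)\,dt$ is computed by expanding in the variables $(p^{-it})_{p\le X}$, which equidistribute on $\mathbf T^{\pi(X)}$ with error $\ll X^{O(1)}/T$ per Fourier mode, giving $\int\Phi\,d(\mathrm{Haar})$ up to a negligible error; letting $X\to\infty$ produces $M(\sigma)$. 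The contributions of $\mathcal E$ and of the sets where $E(\sigma+it,Q)$ or the model is abnormally small are absorbed using the trivial bound $\log|E|\ll\log T$ and, via Jensen's inequality, an upper bound for the measure of $t$ with $|E(\sigma+it,Q)|$ small; the choice $V\asymp\sqrt{\log\log T}$ is what balances the exceptional contribution $\ll T\exp(-cV)\log T$ against the approximation error $\ll T\exp(-cV)$.

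Granting this mean value and the fact, from the theory of value distributions, that $M$ is convex on $(1/2,\infty)$ with Hölder-continuous derivative, the theorem follows. Combining with the Littlewood identity gives $2\pi\int_a^b N_E(u,\infty;T)\,du=T\bigl(\widetilde M(a)-\widetilde M(b)\bigr)+O(T\exp(-b_0\sqrt{\log\log T}))$ uniformly for $a,b$ in a fixed compact subinterval of $(1/2,\infty)$, where $\widetilde M(\sigma)=M(\sigma)+\sigma\log n_1$. Since $u\mapsto N_E(u,\infty;T)$ is nonincreasing, it is sandwiched between its averages over intervals of length $\eta$ to the left and right of $\sigma_j$, which combined with the displayed formula and the regularity of $\widetilde M$ yields
$$N_E(\sigma_j,\infty;T)=-\tfrac{T}{2\pi}\widetilde M'(\sigma_j)+O\bigl(\eta^{\alpha}T+\eta^{-1}T\exp(-b_0\sqrt{\log\log T})+N_E(\sigma_j-\eta,\sigma_j+\eta;T)\bigr),$$
and the thin-strip count $N_E(\sigma_j-\eta,\sigma_j+\eta;T)$ is itself bounded, by the same sandwiching and the Hölder bound for $\widetilde M'$, by $O(\eta^{\alpha}T+\eta^{-1}T\exp(-b_0\sqrt{\log\log T}))$. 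Subtracting at $\sigma_1$ and $\sigma_2$ and choosing $\eta=\exp(-\tfrac{b_0}{1+\alpha}\sqrt{\log\log T})$ gives $N_E(\sigma_1,\sigma_2;T)=cT+O(T\exp(-b\sqrt{\log\log T}))$ with $c=\tfrac1{2\pi}\bigl(M'(\sigma_2)-M'(\sigma_1)\bigr)$; convexity of $M$ forces $c\ge0$, and comparison with Theorem~B forces $c>0$ when $\sigma_1\le1$.

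The main obstacle is the mean-value estimate and, inside it, the logarithmic singularity of the test function $z\mapsto\log\bigl|\tfrac{w_D}{h}\sum_\chi\overline{\chi}(\mathcal{A})e^{z_\chi}\bigr|$ at the zeros of $E$ (and of its Dirichlet-polynomial model). One must make the Dirichlet-polynomial approximation valid off a set so small that its contribution, weighted by the worst-case size $\ll\log T$ of $\log|E|$, is still $O(T\exp(-b\sqrt{\log\log T}))$; control the measure of $t$ at which $|E(\sigma+it,Q)|$ or the model is abnormally small; and keep the equidistribution error on $\mathbf T^{\pi(X)}$ under control despite the slow decay of the Fourier coefficients of $\log|\cdot|$. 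Balancing $X$, the singularity threshold $V$, and the moment parameter against one another is the delicate calculation; it is precisely the zero-density estimates for the Hecke $L$-functions $L(s,\chi)$ that render the exceptional set manageable, which is why they appear in the title.
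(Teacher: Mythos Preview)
Your overall architecture---Littlewood's lemma, a quantitative mean value for $\frac1T\int_T^{2T}\log|E(\sigma+it,Q)|\,dt$, then sandwiching via monotonicity of the one-sided count---matches the paper exactly (the paper uses that $\mathcal M$ is twice differentiable rather than merely H\"older, but your version of the endgame would also work). The gap is in the mean-value step, where two points fail as written. First, ``the trivial bound $\log|E|\ll\log T$'' is only an upper bound; near zeros of $E$ the quantity $\log|E|$ is unbounded below, so the exceptional-set contribution cannot be absorbed as measure times $\log T$. The paper's substitute for a pointwise bound is the moment estimate $\tfrac1T\int_T^{2T}|\log|E(\sigma+it,Q)||^{2k}\,dt\ll(Ck)^{4k}$ (Lemma~6), which via H\"older controls $\int_{\text{small set}}|\log|E||$ with no pointwise lower bound needed. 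Second, your balancing ``$T\exp(-cV)\log T$ against $T\exp(-cV)$'' is vacuous (both terms decrease in $V$), so it cannot be the source of $\sqrt{\log\log T}$. In the paper the square root arises from the singular region $S_{error}(T)$: by the $(Ck)^{4k}$ moment and a Chebyshev argument its measure is $\ll T\,(C k^2/\log\log T)^{2k}$, and optimizing in $k$ gives $k\asymp\sqrt{\log\log T}$ and the bound $T\exp(-b\sqrt{\log\log T})$. The exponent $4k$ (rather than $2k$) in the moment bound is precisely what forces the square root.

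More structurally, the paper does not pass to the random model by Fourier-expanding $\Phi_X$ on a torus; the slow Fourier decay of $\log|\cdot|$ makes that route hard to close with the stated error. Instead it proves a separate discrepancy theorem (Theorem~2): the joint distribution of $\bigl(\log|c_jL_j/c_JL_J(\sigma+it)|,\ \arg c_jL_j-\arg c_JL_J\bigr)_{j\le J}$ agrees with its random model to within $(\log T)^{-\sigma}$ uniformly over axis-parallel rectangles. On each set $S_J(T)$ where $|c_JL_J|$ is maximal one writes $\log|E|=\log|c_JL_J|+\log\bigl|1+\sum_{j<J}c_jL_j/c_JL_J\bigr|$, partitions the range of the ratio vector into rectangles $R_{main}$ on which $\bigl|1+\sum_{j<J}e^{u_j+iv_j}\bigr|\ge\delta$ (so the integrand is Lipschitz of scale $\delta^{-1}$ and the discrepancy bound applies rectangle by rectangle) and a thin singular region $R_{error}$, whose $\Psi$- and $\Psi_T$-measures are controlled by the moment lemmas. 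The zero-density input you highlight enters only inside the proofs of those auxiliary lemmas (the Dirichlet-polynomial approximation and the moment bounds), not as the main balancing device.
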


The proofs of the above theorems begin with the well-known identity
\begin{equation}\label{E lin com eqn}
 E(s,Q) = \frac{w_D}{ h(D)} \sum_{\chi} \overline{\chi (\frak{a}_Q ) } L(s, \chi) , 
 \end{equation}
where the sum is over all characters of the class group, 
$\frak{a}_Q$ is any integer ideal in the ideal class corresponding to the equivalence class
of $Q$,  
 and $L(s,\chi)$ is the Hecke $L$-function defined by
$$ L(s,\chi) = \sum_{\frak{n}} \frac{ \chi( \frak{n}) }{ \frak{N}( \frak{n} )^s} = \prod_{\frak{p}} \left( 1- \frac{ \chi( \frak{p})}{\frak{N}( \frak{p})^s} \right)^{-1}   $$
for $ \s > 1 $. Here $ \frak{N}$ is the norm. Each Hecke $L$-function has a meromorphic continuation to $\mathbb{C}$, and it has a simple pole at $s=1$ only when the character $\chi$ is trivial. It also satisfies the   functional equation  \eqref{func eqn}
except that this time 
$$ 
\Psi(s,\chi):= \left( \frac{  \sqrt{-D}}{2 \pi} \right)^s \Gamma (s) L(s,\chi)  . 
$$
The $L$-functions in the sum \eqref{E lin com eqn} are not distinct.   For each rational prime $p$, a principal ideal $ (p)$ is a prime ideal $ \frak p$ or a product of two prime ideals  $ \frak p_1 \frak p_2 $. If $ (p) = \frak p $, then $ \chi( \frak p ) = 1 $ and 
$$  \prod_{\frak{p} | p } \left( 1- \frac{ \chi( \frak{p})}{\frak{N}( \frak{p})^s} \right)^{-1} =  \left( 1- \frac{1}{p^{2s}} \right)^{-1} .  $$
If $ (p) = \frak p_1 \frak p_2 $,  then $ \chi( \frak p_1 ) \chi( \frak p_2 ) = 1 $. Thus $ \chi ( \frak p_1 ) = \overline{ \chi ( \frak p_2 ) } $ and 
$$  \prod_{\frak{p} | p } \left( 1- \frac{ \chi( \frak{p})}{\frak{N}( \frak{p})^s} \right)^{-1} =  \left( 1- \frac{ \chi ( \frak p_1 ) + \chi( \frak p_2 )  }{p^s}   + \frac{1}{ p^{2s}}   \right)^{-1} = \left( 1- \frac{2 \Re \chi ( \frak p_1 )    }{p^s}   + \frac{1}{ p^{2s}}   \right)^{-1} .  $$
It follows that $ L(s,\chi ) = L ( s, \overline{\chi}) $.
We now let $ J $ be the number of real characters plus one-half the number of complex characters, and list the characters as $\chi_1 ,\dots, \chi_J $ in such a way that $ \chi_j \neq \chi_k $ and $ \chi_j \neq \overline \chi_k $ for $ j \neq k$. Then, writing
$$
L_j(s) =L(s, \chi_j),
$$
we may   rewrite   \eqref{E lin com eqn} as
\begin{equation}\label{lin com repn}
 E(s,Q) =  \sum_{j=1}^J  c_j  L_j (s) , 
 \end{equation}
 where  
 $$ c_j =  \frac{w_D}{ h(D)} \chi_j ( \frak{a}_Q ) $$
 for real characters $\chi_j$, and
 $$ c_j = \frac{w_D}{ h(D)} 2 \Re \chi_j ( \frak{a}_Q ) $$
 for complex characters $ \chi_j$. Note that $ J>1$ if and only if $ h(D) > 1 $.

 Voronin~\cite{V} deduced Theorem~\ref{V thrm} from a joint distribution result for  the inequivalent Hecke $L$-functions  $L_1 (s) , \dots, L_J (s)$ in \eqref{lin com repn}. 
 On the other hand, Lee's proof of Theorem~\ref{thm: Lee} in \cite{Le2} proceeded via a study of the Jensen function
$$ \varphi (\sigma ) = \lim_{T \to \infty} \frac 1T \int_1^T \log | E( \sigma + it ,Q) | dt .
$$ 
Lee    showed that $\varphi (\sigma )$  is twice differentiable and that the density of zeros of $E(s)$ in the strip $ \sigma_1 < \s < \sigma_2 $ equals   
$$ \varphi'(\sigma_2 ) - \varphi'(\sigma_1 ) = \int_{\sigma_1}^{\sigma_2 } \varphi''(\sigma) d \sigma . 
$$
Our proof of Theorem~\ref{main thrm}  also  proceeds through the estimation of the  integral 
$$\frac1T  \int_T^{2T} \log | E( \sigma + it , Q ) | dt = \frac1T  \int_T^{2T}
 \log \bigg| \sum_{j \leq J } c_j  L_j ( \sigma  + it)  \bigg|  dt  $$
 but, in addition,  incorporates   recent ideas of Lamzouri, Lester, and Radziwi\l\l~\cite{LLR} 
in their study of the distribution of $a$-points of the Riemann zeta function.

It is worth noting that when  $ J =2$, 
\begin{align*}
 \frac1T  \int_T^{2T} \log | E( \sigma + it , Q ) | dt 
 & = \frac1T \int_T^{2T} \log | c_1 L_1 ( \sigma+it)   | dt  + \frac1T \int_T^{2T} \log \left|  \frac{ c_2 L_2 ( \sigma+it)}{c_1 L_1 ( \sigma + it) } + 1 \right| dt  .
 \end{align*}
It is not difficult to show that the first term here equals  $\log |c_1 |$ plus a  small   error term. We can also estimate the second term by a   straightforward   adaptation of the method in \cite{LLR}. However,  when $ J > 2 $   this approach   no longer   works. In what follows, therefore,  we are mostly  interested in the case $ J \geq 3 $.

 Corresponding to the Hecke $L$-functions
$$ 
L_j(s)= L(s,\chi_j) = \prod_{\frak{p}} \left( 1- \frac{ \chi_j( \frak{p})}{\frak{N}( \frak{p})^s} \right)^{-1}   \qquad (1\leq j\leq J)
$$
we define  the random models  
$$ 
L_j(\sigma, X) :=   \prod_{\frak{p}} \left( 1- \frac{ \chi_j( \frak{p})X( {p})}{\frak{N}( \frak{p})^\sigma} \right)^{-1}  \qquad (1\leq j\leq J),  
$$
where $p$ is the unique rational prime dividing $\frak N(\frak p)$, and the $X(p)$ are uniformly and independently distributed on the unit circle $\mathbb T$. Note that these products converge almost surely on $\mathbb T^\infty$  for $\s> 1/2$.
We define 
\be\notag
\log L_j(\sigma, X):= \sum_{\frak p} \sum_{k=1}^{\infty}  \frac{\chi_j( \frak{p}^k)\  
X( {p})^k}{k\ \frak{N}( \frak{p})^{k\sigma} }
\ee
and define $\log |L_j(\sigma, X)|$ and $\arg L_j(\sigma, X)$ as its real and imaginary parts, 
respectively.
These too converge almost surely on $\mathbb T^\infty$ for $\s> 1/2$.

Let
 \begin{align*}
  \L(s)  :=  
  \big(\log & |{c_1 L_1}/{c_JL_J} (s)|,   \dots ,  
 \log  | {c_{J-1}L_{J-1}}/{c_J L_J} (s)  |, \log  | c_J L_J (s)  |   ;  \\
&  \arg  {c_1L_1}(s)-\arg{c_J L_J} ( s) , \dots ,  
  \arg  {c_{J-1} L_{J-1}}(s) - \arg{c_J L_J} (s), \arg c_J L_J ( s) \big) 
 \end{align*}
     and
     \begin{align*}
 \L  ( \sigma, X )  := \big(    \log &|{c_1 L_1}/{c_J L_J}  ( \sigma,X)| ,\dots,  
  \log  | c_{J-1} L_{J-1} /{c_JL_J}(\sigma , X)  | ,   \log  |  {c_JL_J} (\sigma , X) | ; \\
&     \arg   c_1 L_1( \sigma, X )   -\arg c_JL_J( \sigma, X ) , \dots , 
\arg  c_{J-1} L_{J-1}( \sigma, X ) -\arg c_J L_J( \sigma, X ) , \\
&\quad \arg c_J L_J ( \sigma , X)  \big) .  
\end{align*}
For  a Borel set $  \B$ in $\mathbb R^{2J}$ and for~$1/2<\sigma<1$ fixed, we define
\be\label{Psi_T}
\Psi_T(\B)
 := \frac1T \mathrm{meas}  \{ t\in[T, 2T] :  \L (s)\in \B  \}  
\ee
and 
 \be\label{Psi}
\Psi(\B):=
 \mathbb{P}(\L(\sigma, X) \in \B)
 =  \mathrm{meas}  \{ X  \in \mathbb{T}^\infty :  \L(\sigma, X) \in \B  \}  .
 \ee
  We define the \emph{discrepancy} between these two distributions as
$$
 \D_T(\B):= \Psi_T(\B)-\Psi(\B).
$$ 
The key ingredient in our proof of Theorem~\ref{main thrm} is the  following bound  for
$ \D_T(\B)$, which is an analogue of Theorem 1.1 of \cite{LLR}   and is interesting 
in its own right.

\begin{thrm}\label{lem: discrepancy} 
Let $\tfrac12< \sigma<1$ be fixed. Then
$$
 \sup_{\mathscr R} |\D_T(\mathscr R)| \ll \frac{1}{(\log T)^{\sigma}},
$$
where $\mathscr R$ runs over all rectangular regions (possibly unbounded) with sides parallel to the coordinate axes.
\end{thrm}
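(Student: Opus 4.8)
The plan is to follow the Fourier-analytic strategy of \cite{LLR}, but carried out simultaneously in all $2J$ coordinates of $\L$ --- this is forced on us since, as remarked above, for $J\ge3$ one cannot isolate a single $L_j$. The constants $c_j$ merely shift $\L(\sigma+it)$ and $\L(\sigma,X)$ by one and the same fixed vector, so they contribute a common factor $e^{\,i\,\u\cdot(\mathrm{const})}$ to both characteristic functions, which cancels in $\D_T$; hence we work with $\log|L_j|$ and $\arg L_j$ directly.

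\emph{Step 1: reduction to characteristic functions.} For $\u\in\mathbb R^{2J}$ write
$$
\widehat\Psi_T(\u)=\frac1T\int_T^{2T}e^{\,i\,\u\cdot\L(\sigma+it)}\,dt,\qquad
\widehat\Psi(\u)=\E\big[e^{\,i\,\u\cdot\L(\sigma,X)}\big].
$$
By a quantitative multidimensional Erd\H{o}s--Tur\'an (Beurling--Selberg) inequality, exactly as in \cite{LLR}, $\sup_{\mathscr R}|\D_T(\mathscr R)|$ is bounded by $1/\mathcal T$ plus a suitably weighted average of $|\widehat\Psi_T(\u)-\widehat\Psi(\u)|$ over $\|\u\|_\infty\le\mathcal T$, with the cutoff $\mathcal T$ at our disposal. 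So it suffices to bound $\widehat\Psi_T(\u)-\widehat\Psi(\u)$ uniformly for $\|\u\|_\infty\le\mathcal T$, with $\mathcal T$ eventually a fixed power of $\log T$.

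\emph{Step 2: Dirichlet polynomial models.} Fix $Y=(\log T)^{A}$ with $A=A(\sigma)$ large. Set $P_j(s)=\sum_{\mathfrak N(\mathfrak p)\le Y}\sum_{k\ge1}\chi_j(\mathfrak p^k)/(k\,\mathfrak N(\mathfrak p)^{ks})$, let $P_j(\sigma,X)$ be the analogous truncation of $\log L_j(\sigma,X)$ (with $\mathfrak N(\mathfrak p)^{-ks}$ replaced by $X(p)^k\mathfrak N(\mathfrak p)^{-k\sigma}$), and let $\mathbf P(s),\mathbf P(\sigma,X)$ be the $2J$-vectors obtained from $\L$ by substituting the real and imaginary parts of $P_j$ for $\log|L_j|$ and $\arg L_j$. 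We need three inputs. \emph{(i)} Since $\sigma>1/2$, the tail $\log L_j(\sigma,X)-P_j(\sigma,X)$ has $L^2$-norm $\ll Y^{\frac12-\sigma}$, with matching control of higher moments, so $\widehat\Psi(\u)=\E[e^{\,i\,\u\cdot\mathbf P(\sigma,X)}]+O(\|\u\|\,Y^{\frac12-\sigma})$. \emph{(ii)} The standard approximation of $\log L_j$ and $\arg L_j$ on $\tfrac12<\sigma<1$ by the truncated Euler product --- which rests on a zero-density estimate $N_{L_j}(\alpha,T)\ll T^{c(1-\alpha)}(\log T)^{C}$ for each Hecke $L$-function $L_j$ (these being the $L$-functions of ring-class characters of $\mathbb Q(\sqrt D)$, for which such estimates are classical) together with a contour shift --- gives $\L(\sigma+it)=\mathbf P(\sigma+it)+(\text{small})$ outside a set $\mathcal E\subset[T,2T]$ of measure $\ll T(\log T)^{-B}$ for any fixed $B$, as in the corresponding steps of \cite{Le2,LLR}. \emph{(iii)} The mean value theorem for Dirichlet polynomials of length $Y$, applied to the even moments of $\mathbf P(\sigma+it)$ (valid while $Y^{k}\le T^{1-\varepsilon}$), gives sub-Gaussian concentration of $\|\mathbf P(\sigma+it)\|$ matching that of $\|\mathbf P(\sigma,X)\|$, which is what lets us truncate the exponential series below.

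\emph{Step 3: joint moments, and the main obstacle.} By (i)--(iii) it remains to compare $\frac1T\int_T^{2T}e^{\,i\,\u\cdot\mathbf P(\sigma+it)}\,dt$ with $\E[e^{\,i\,\u\cdot\mathbf P(\sigma,X)}]$ for $\|\u\|_\infty\le\mathcal T$, which by expanding the exponential reduces to comparing the moments $\frac1T\int_T^{2T}(\u\cdot\mathbf P(\sigma+it))^r\,dt$ and $\E[(\u\cdot\mathbf P(\sigma,X))^r]$ for $r$ up to a suitable order (the higher-order tail being absorbed by the concentration in (iii)). After multiplying out the logarithmic series these are finite sums whose terms agree \emph{coefficient by coefficient}, except that the $t$-integral carries a factor $\frac1T\int_T^{2T}\prod_p p^{\,e_p it}\,dt$ in place of $\prod_p\E[X(p)^{e_p}]$, where $e_p$ is the signed total exponent accrued at the rational prime $p$ (the residue degrees of the $\mathfrak p\mid p$ only rescaling the $t$-exponent by $1$ or $2$, so it vanishes exactly when the $X(p)$-exponent does). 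By unique factorization in $\mathbb Z$ the $t$-integral keeps precisely the terms with every $e_p=0$, up to an off-diagonal error $\ll T^{-\delta}$ coming from the spacing $\gg T^{-1+\varepsilon}$ of distinct integers $\le Y^r\ll T^{1-\varepsilon}$; by independence of the $X(p)$ the expectation keeps precisely the same terms. Hence $\widehat\Psi_T(\u)-\widehat\Psi(\u)\ll\|\u\|\,Y^{\frac12-\sigma}+T^{-\delta}$, and inserting this into Step 1 and balancing the parameters as in \cite{LLR} yields the stated $\ll(\log T)^{-\sigma}$. The real difficulty is the one already flagged: for $J\ge3$ the comparison must be done jointly in $2J$ dimensions, so $\arg L_1,\dots,\arg L_J$ must be approximated \emph{simultaneously} off a single exceptional set --- this is exactly where the zero-density estimates for the individual $L_j$ are indispensable --- and one must verify that the diagonal surviving the $t$-average coincides with the one surviving the $X$-average for the joint random model, using that the $X(p)$ are shared across all the $\chi_j$ but independent over $p$. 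The delicate point is then to extract the sharp exponent $(\log T)^{-\sigma}$, rather than merely $o(1)$, from the interplay between the truncation error $Y^{\frac12-\sigma}$, the exceptional set, the off-diagonal terms and the Fourier cutoff $\mathcal T$, which we carry out following \cite{LLR}.
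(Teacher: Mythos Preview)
Your outline matches the paper's strategy in all essentials: the paper also reduces to bounding $\widehat\Psi_T-\widehat\Psi$ on a Fourier cutoff $\|\u\|_\infty\le b(\log T)^\sigma$, replaces each $\log L_j$ by a Dirichlet polynomial $R_{j,Y}$ with $Y=(\log T)^{B}$ off a small exceptional set coming from zero-density bounds, Taylor-expands the exponential to length $N\asymp\log T/\log\log T$, and matches the $t$-moments to the $X$-moments term by term. Two points, however, are not visible in your write-up and are where the actual work and the sharp exponent live.

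First, the ``$1/\mathcal T$'' term in your Step~1 is not automatic. In the paper the Beurling--Selberg (Tsang) approximation of ${\bf 1}_R$ produces, besides the smoothed main term, error terms of the shape $\big(\sin\pi\eta(u_j-\alpha_j)/\pi\eta(u_j-\alpha_j)\big)^2$. Their $t$-average (and $X$-average) rewrites as $\frac{1}{\eta^2}\int_0^{2\pi\eta}(2\pi\eta-y)\,\widehat\Psi(y,0,\dots,0)\,dy$ up to a phase, and with only the trivial bound $|\widehat\Psi|\le1$ this is $O(1)$, not $O(1/\eta)$. The paper proves a separate decay estimate
\[
|\widehat\Psi(y,0,\dots,0)|\le\exp\!\Big(-c_\sigma\,\frac{y^{1/\sigma}}{\log y}\Big)\qquad(y\to\infty),
\]
by locating an ideal class on which $a_1(p)-a_J(p)\neq0$ (this is exactly where $\chi_i\neq\chi_j,\overline{\chi_j}$ is used) and applying a Bessel-function/$J_0$ bound over the primes in a suitable range. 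Only with this input does the smoothing error become $\ll1/\eta$. Your appeal to a generic multidimensional Erd\H os--Tur\'an hides this step; you should either prove the analogue of this decay (for each coordinate direction of $\L$) or, equivalently, show that $\Psi$ has a bounded density, which amounts to the same thing.

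Second, the reason the cutoff can be pushed to $\mathcal T=b(\log T)^\sigma$ --- and hence the exponent in the theorem is $\sigma$ --- is the moment bound
\[
\frac1T\int_T^{2T}|R_{j,Y}(\sigma+it)|^{2k}\,dt\ \ll\ \Big(\frac{Ck^{1-\sigma}}{(\log k)^\sigma}\Big)^{2k},
\]
which the paper isolates as a lemma. This is what makes the Taylor truncation at $N\asymp b\,\log T/\log\log T$ work when $|u_j|\le b(\log T)^\sigma$, and what makes the contribution of the set where $|R_{j,Y}|$ is large negligible. Your ``sub-Gaussian concentration'' in (iii) is the right idea, but the precise growth $k^{1-\sigma}$ (not $k^{1/2}$) is what produces the $\sigma$ in the final bound; it is worth stating and proving explicitly rather than folding it into ``balancing the parameters as in \cite{LLR}''.
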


The letters   $A, B$ and $C$  denote positive constants throughout that are not necessarily  the same at each occurrence. Boldfaced letters denote vectors whose components may be functions.
We also write
$$
\sL =\log\log T.
$$

\section{Basic lemmas}
In this section we provide several  of the  technical lemmas we shall need later.
 
 \begin{lemma}\label{lem: log 2k dv}
Let $ a\geq b>0$. There exists an absolute positive constant $C$ such that for 
any positive integer $k$ we have
$$ \frac{1}{ 2 \pi} \int_0^{2 \pi} \big(   \log  |a - be^{iv}  | \big)^{2k} dv 
\ll   (C\  |\log a|)^{2k} + ( Ck)^{2k} . $$

\end{lemma}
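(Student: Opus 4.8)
The plan is to split the integral over $v\in[0,2\pi]$ according to whether the point $be^{iv}$ is close to or far from $a$, since the logarithm $\log|a-be^{iv}|$ only becomes large (negatively) when $be^{iv}$ is near $a$, which forces $b$ to be close to $a$ as well. Write $f(v)=\log|a-be^{iv}|$. First I would record the elementary bound $|a-be^{iv}|\le a+b\le 2a$, so that $f(v)\le \log(2a)$, giving an upper bound $|\log a|+\log 2$ on the positive part of $f$; this already handles the "easy'' direction and contributes a term of size $(C|\log a|)^{2k}$ after raising to the $2k$-th power (using $(x+y)^{2k}\ll x^{2k}+y^{2k}$). The real work is to control the negative part, i.e. the set where $|a-be^{iv}|$ is small.

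On that set, first I would observe that $|a-be^{iv}|^2 = a^2+b^2-2ab\cos v \ge (a-b)^2 + 2ab(1-\cos v)\gg (a-b)^2 + ab\,v^2$ for $v$ in a fixed neighborhood of $0$ (and symmetrically near $2\pi$), using $1-\cos v\gg v^2$ there. Since $a\ge b>0$ we have $ab\ge b^2$, but more to the point, when $|a-be^{iv}|$ is small we must have $a\asymp b$; so on the relevant range $|a-be^{iv}|\gg |v|\cdot\sqrt{ab}\gg |v|\, a \ge |v|$ up to the contribution of $(a-b)^2$, and in particular $|a-be^{iv}|\gg \min(1,|v| a)$ once $a\asymp b$. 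A cleaner route: after dividing through, it suffices (by the substitution and scaling $w = be^{iv}/a$, or by directly using $|\log|a-be^{iv}||\le |\log a| + |\log|1-(b/a)e^{iv}||$) to reduce to the model integral $\frac{1}{2\pi}\int_0^{2\pi}\big(\log|1-re^{iv}|\big)^{2k}\,dv$ with $0<r\le 1$, and to bound this by $(Ck)^{2k}$ uniformly in $r$.

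For the model integral I would use the distribution-function / layer-cake method: for $\lambda>0$,
$$
\mathrm{meas}\{v\in[0,2\pi]: \log|1-re^{iv}| < -\lambda\}
= \mathrm{meas}\{v: |1-re^{iv}| < e^{-\lambda}\} \ll e^{-\lambda},
$$
because $|1-re^{iv}|< e^{-\lambda}$ forces $r\ge 1-e^{-\lambda}$ and $v$ (or $2\pi-v$) to lie in an interval of length $\ll e^{-\lambda}$ around $0$, by the lower bound $|1-re^{iv}|\gg |v|$ valid for $r$ close to $1$. Then
$$
\frac{1}{2\pi}\int_0^{2\pi}\big(\log^-|1-re^{iv}|\big)^{2k}\,dv
= \frac{1}{2\pi}\int_0^\infty 2k\,\lambda^{2k-1}\,\mathrm{meas}\{v: \log|1-re^{iv}|<-\lambda\}\,d\lambda
\ll \int_0^\infty 2k\,\lambda^{2k-1} e^{-\lambda}\,d\lambda = 2k\,\Gamma(2k) = (2k)!,
$$
and Stirling's formula gives $(2k)!\le (Ck)^{2k}$. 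Combining the positive-part estimate, the reduction to the model integral (the cross term $|\log a|\cdot(\text{large negative part})$ being absorbed by $(C|\log a|)^{2k}+(Ck)^{2k}$ via Young's inequality $xy\le \tfrac12 x^2+\tfrac12 y^2$ applied inside the $2k$-th power), yields the claimed bound $\ll (C|\log a|)^{2k} + (Ck)^{2k}$.

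The main obstacle is purely bookkeeping rather than conceptual: one must be careful that the implied constants in $|1-re^{iv}|\gg|v|$ and in $1-\cos v\gg v^2$ are genuinely absolute and hold on a fixed neighborhood of $v=0$ independent of $r$ and $k$, and that the passage from $\log|a-be^{iv}|$ to $\log|1-(b/a)e^{iv}|$ does not secretly require $a\asymp b$ globally — it does not, since when $a$ and $b$ are far apart $|a-be^{iv}|$ is bounded below by a constant multiple of $a$, so $\log^-$ of it is $\ll |\log a|$ there and contributes only to the first term. Keeping the two regimes ($a\asymp b$ versus $a\gg b$) cleanly separated, and tracking that the exponent $2k$ interacts well with $(x+y)^{2k}\ll_{} 2^{2k}(x^{2k}+y^{2k})$ (the factor $2^{2k}$ being harmlessly absorbed into the constants $C$), is the one place where care is needed.
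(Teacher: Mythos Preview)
Your proposal is correct and takes a genuinely different route from the paper. After factoring $\log|a-be^{iv}|=\log a+\log|1-(b/a)e^{iv}|$ and using $|x+y|^{2k}\le 2^{2k-1}(|x|^{2k}+|y|^{2k})$, your key step is the distribution-function (layer-cake) estimate: from $|1-re^{iv}|^2=(1-r)^2+4r\sin^2(v/2)$ one gets $\mathrm{meas}\{v\in[0,2\pi]:|1-re^{iv}|<e^{-\lambda}\}\ll e^{-\lambda}$ with an absolute constant, uniformly in $r\in(0,1]$, and then $\int_0^\infty 2k\lambda^{2k-1}e^{-\lambda}\,d\lambda=(2k)!\le(2k)^{2k}$. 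This is clean and handles $a=b$ with no separate argument.

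By contrast, the paper proceeds algebraically. For $a>b$ it rewrites the integral as a contour integral on $|z|=1$, expands $\log(a-bz)$ and $\log(a-b/z)$ as geometric-type series in $b/a$, applies the multinomial theorem to the $2k$-th power, and computes residues; the resulting diagonal sums $\sum_{n_1+\cdots+n_{k_2}=\ell}(n_1\cdots n_{k_2})^{-1}$ are then bounded combinatorially, leading after summation to $(C|\log a|)^{2k}+(Ck)^{2k}$. The boundary case $a=b$ is treated separately via $\log|a-ae^{iv}|=\log|2a|+\log|\sin(v/2)|$ and a direct $\Gamma$-integral. Your approach is shorter and more elementary, avoiding the power-series bookkeeping and the $a=b$ case split; the paper's approach, on the other hand, makes the algebraic structure of the moments more transparent and in principle tracks the dependence on $\log a$ and $k$ term by term, though that extra information is not used later.
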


\begin{proof}
First assume that $ a>b $. Then writing $z$ for $e^{iv}$, we find that 
\begin{align}\label{aux mmt}
 & \frac{1}{ 2 \pi} \int_0^{2 \pi} \big(   \log  | a- be^{iv}  | \big)^{2k} dv   \notag  \\
& =   \frac{1}{(2 \pi i)  4^k } \int_{|z|=1} \big(   - \log ( a- bz ) -     \log  ( a-  {b}{z^{-1}}  )  \big)^{2k} \frac{dz}{z} \\
& =  \frac{1}{(2 \pi i)  4^k }  \int_{|z|=1} \bigg( -2 \log a + \sum_{n=1}^\infty \frac{( bz/a)^n }{n}     
+  \sum_{m=1}^\infty \frac{  (  b/az)^m}{m}     \bigg)^{2k}\ \frac{dz}{z}   \notag  \\
& = \frac 1{4^k}  \sum_{k_1 + k_2 + k_3 = 2k} { {2k} \choose { k_1 , k_2, k_3 } } 
( -2 \log a )^{k_1}  \Bigg\{ \frac{1}{ 2 \pi i } \int_{|z|=1}  \bigg( \sum_{n=1}^\infty \frac{ (bz/a)^n}{n}   \bigg)^{k_2} 
\bigg(  \sum_{m=1}^\infty \frac{(b/az)^m}{m}   \bigg)^{k_3 } \frac{dz}{z} \Bigg\} .  \notag 
\end{align}
We calculate the expression in braces by the calculus of residues. If $k_2=k_3=0$, it equals $1$. If one of $k_2, k_3 $ 
is $0$ but the other is not,   it equals $0$. In all other cases, since $a>b>0$,
 it equals
\be\notag
\begin{split}
 \sum_{ n_1 + \cdots + n_{k_2} = m_1 + \cdots + m_{k_3}}   
& \frac{(b/a)^{n_1 + \cdots + n_{k_2} + m_1 + \cdots + m_{k_3} }}{  n_1  \cdots  n_{k_2}  m_1 \cdots  m_{k_3} } \\
  &\qquad <\sum_{ n_1 + \cdots + n_{k_2} = m_1 + \cdots + m_{k_3}}   
 \frac{1}{  n_1  \cdots  n_{k_2}  m_1 \cdots  m_{k_3} } \\
  &\qquad =\sum_{\ell = 1}^\infty  \Bigg( \sum_{ n_1 + \cdots + n_{k_2} =\ell} \frac{1}{  n_1  \cdots  n_{k_2} } \Bigg) 
  \Bigg(   \sum_{ m_1 + \cdots + m_{k_3}=\ell}   \frac{1}{  m_1 \cdots  m_{k_3} } \Bigg).
\end{split}
\ee
In the first sum at least one of the $n_i$ is  maximal, and    therefore $ \geq \ell/k_2 $. There are $k_2$
choices for the maximal term, so
\begin{align*}
 \sum_{ n_1 + \cdots + n_{k_2} =\ell} \frac{1}{  n_1  \cdots  n_{k_2} } 
  \leq k_2 \frac{1}{ (\ell/k_2) }  \sum_{n_1 , \dots,\ n_{k_2 - 1} \leq \ell}   \frac{1}{n_1  \cdots \  n_{k_2 -1}}  
  \ll     \frac{ k_2 ^2 }{ \ell}   ( \log \ell)^{k_2 -1}  .
\end{align*}
Thus the above is
\be\notag
(k_2 k_3)^2  \sum_{\ell = 1}^\infty   \frac{( \log \ell)^{k_2 +k_3-2}  }{ \ell^2}   .
\ee
Combining our estimates, we find that  \eqref{aux mmt} is
\be\notag
\ll \frac 1{4^k}(2|\log a|)^{2k}  +
 \frac {k^4  }{4^k}  \sum_{\substack{k_1 + k_2 + k_3 = 2k\\ k_2, k_3\geq 1} } { {2k} \choose { k_1 , k_2, k_3 } } 
(  2| \log a |)^{k_1}  \Bigg(  \sum_{\ell = 1}^\infty   \frac{( \log \ell)^{k_2 +k_3-2}  }{ \ell^2}  \Bigg) .
\ee
The function $ f(x) = ( \log x)^{k_2 + k_3 - 2} x^{-2}$ has a maximum at $ x = x_0 = e^{ (k_2 + k_3 - 2)/2 } $ and 
it  is increasing  for $ 0 < x < x_0$ and decreasing for $ x > x_0$. Thus
\begin{align*}
 \sum_{\ell = 1}^\infty    \frac{   ( \log \ell)^{k_2 +k_3-2}}{ \ell^2} & \ll x_0 \frac{ ( \log x_0 )^{k_2 + k_3 - 2 }}{ x_0 ^2 } + \int_{x_0}^\infty   \frac{  ( \log u )^{ k_2 + k_3 - 2 }}{ u^2 } du \\
 & \ll (C k)^{k_2 + k_3 -2} + \int_{\log x_0 }^\infty v^{k_2 + k_3 - 2 } e^{-v} dv \\
 & \ll (C k)^{k_2 + k_3 -2} +\Gamma( k_2 + k_3 - 1)   
   \ll (C k)^{k_2 + k_3 - \frac12  }.
\end{align*}
Hence,
\begin{align*}
   \frac{1}{ 2 \pi} \int_0^{2 \pi} \big(   \log  | a- be^{iv}  | \big)^{2k} dv  
    \ll & |   \log a|^{2k} + k^{7/2}  \sum_{\substack{k_1 + k_2 + k_3 = 2k  \\ k_2 , k_3 > 0 }} { {2k} \choose { k_1 , k_2, k_3 } } | 2 \log a | ^{k_1}  (Ck)^{k_2 + k_3 }\\
    \ll & | \log a|^{2k} + k^{7/2} ( | 2 \log a | +  Ck )^{2k}  \\
   \ll &(C | \log a |)^{2k} +  (Ck)^{2k}     .
\end{align*}

Now  consider the case $ a=b>0$. We have
\begin{align*}
   \frac{1}{ 2 \pi} \int_0^{2 \pi} \big(   \log  | a- ae^{iv}  | \big)^{2k} dv  
   = &   \frac{1}{ 2 \pi} \int_0^{2 \pi} \big( \log |2a| +  \log  | \sin v/2  | \big)^{2k} dv
 \\ 
   \leq & 2^{2k-1} \bigg(  (\log |2a| )^{2k}+  \frac{1}{ \pi}   \int_0^{ \pi} \big(   \log  | \sin v/2  | \big)^{2k} dv \bigg) .
  \end{align*}
Note that for $0\leq v\leq \pi$
we have \, $v/4\leq  |\sin(v/2)|  \leq 1$.   Thus, the last line  is
\begin{align*}   
   \leq& 2^{2k-1} \bigg(  (\log |2a| )^{2k}+  \frac{1}{ \pi}   \int_0^{ \pi} \big(   \log  (v/4 )\big)^{2k} dv \bigg)  \\
   \ll &  C^{2k}\bigg( | \log a |^{2k}   +       \int_0^1 \big(   \log  x  \big)^{2k} dx\bigg)  
    \ll    C^{2k}\bigg( | \log a |^{2k}   +      \Gamma(2k+1) \bigg)  \\
   \ll &(C | \log a |)^{2k} +  (Ck)^{2k}  .
       \end{align*}
\end{proof}


\begin{lemma}\label{lem : Dir Poly approx}
Let $L(s)=L(s, \chi)$ be a Hecke $L$-function  attached to an ideal class character of the quadratic field $\mathbb{Q}(\sqrt{D})$. For $\s > 1 $  write 
$$ 
\log L(s)  = \sum_{p, n } \frac{a(p^n)}{p^{n s}} ,
$$
and for $Y\geq 2$ and any $s$ let
$$
R_Y(s)= \sum_{p^n\leq Y } \frac{ a(p^n)}{p^{n s}} .
$$
Suppose that \  $ 1/2 < \sigma < 1 $ and   $B_1 > 0$ are fixed, and 
that $ Y = ( \log T)^{B_2}$ with $B_2 >2(B_1+1)/(\sigma-1/2)$.
Then
$$ 
\log L(s) = R_Y (s) + O\big( ( \log T)^{-B_1} \big)
$$
for all $ t \in [T, 2T]$ except on a set of measure $\ll T^{1-d(\sigma)}$, where $ d(\sigma )>0$.   \end{lemma}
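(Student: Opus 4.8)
The plan is to produce the exceptional set of measure $\ll T^{1-d(\sigma)}$ from a zero–density estimate for $L(s)$, and to obtain the approximation itself by a smoothed Perron / contour–shift argument; the hypothesis $B_2>2(B_1+1)/(\sigma-\tfrac12)$ is exactly what makes the shifted integral of size $\ll(\log T)^{-B_1}$ once the contour is moved to a suitable abscissa. Fix $\sigma_1=\tfrac12(\sigma+\tfrac12)$, so $\tfrac12<\sigma_1<\sigma$ and $\sigma-\sigma_1=\tfrac12(\sigma-\tfrac12)$, and fix $\sigma_0\in(\tfrac12,\sigma_1)$. With $C$ a large fixed constant, let $\mathcal E\subset[T,2T]$ be the set of $t$ for which $L$ has a zero $\rho=\beta+i\gamma$ with $\beta\ge\sigma_0$ and $|\gamma-t|\le(\log T)^{C}$. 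By a zero–density estimate for the Hecke $L$–function $L(s,\chi)$ one has $N_L(\sigma_0,T)\ll T^{1-\delta}$ for some $\delta=\delta(\sigma_0)>0$ (the count of zeros with $\beta\ge\sigma_0$ and ordinate $\asymp T$), and since each such zero disqualifies a $t$–interval of length $\ll(\log T)^{C}$ we get $\mathrm{meas}(\mathcal E)\ll(\log T)^{C}T^{1-\delta}\ll T^{1-d(\sigma)}$ with $d(\sigma)=\delta/2>0$. Moreover, for $t\notin\mathcal E$ every zero with $|\gamma-t|\le(\log T)^C$ has $\beta<\sigma_0<\sigma_1$, so the explicit formula for $L'/L$ yields the crude bound $|\log L(\sigma_1+i\tau)|\ll\log T$ for $|\tau-t|\le(\log T)^{C}$.

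For the approximation, take $\phi$ smooth with $\phi\equiv1$ on $[0,1]$ and $\mathrm{supp}\,\phi\subset[0,2]$, and let $\widetilde\phi(w)=\int_0^\infty\phi(u)u^{w-1}\,du=\tfrac1w+(\text{entire})$, which decays faster than any power on vertical lines. Writing $\log L(s)=\sum_n b(n)n^{-s}$ for $\Re s>1$ (so that $b(p^k)=a(p^k)$ and $R_Y(s)$ is this series truncated to $n\le Y$), Mellin inversion gives
$$R_Y^\flat(s):=\sum_n\frac{b(n)}{n^s}\,\phi\!\Big(\frac nY\Big)=\frac1{2\pi i}\int_{(1)}\log L(s+w)\,\widetilde\phi(w)\,Y^w\,dw .$$
I would truncate this at $|\Im w|=(\log T)^{C}$ with negligible error and shift the contour to $\Re w=\sigma_1-\sigma<0$. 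For $t\notin\mathcal E$ the only singularity inside the resulting rectangle is the pole of $\widetilde\phi$ at $w=0$, of residue $\log L(s)$: the branch points of $\log L(s+w)$ at zeros with $\Re\rho\ge\sigma_0$ are avoided, and the logarithmic singularity of $\log L$ at $s+w=1$ lies at height $\asymp T$, outside. On the new line $|Y^w|=Y^{\sigma_1-\sigma}=(\log T)^{B_2(\sigma_1-\sigma)}$; using $|\log L(\sigma_1+i\tau)|\ll\log T$ off $\mathcal E$ and $\int|\widetilde\phi|<\infty$, the shifted integral is $\ll(\log T)^{1+B_2(\sigma_1-\sigma)}=(\log T)^{1-B_2(\sigma-1/2)/2}\ll(\log T)^{-B_1}$, precisely because $B_2>2(B_1+1)/(\sigma-\tfrac12)$. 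Thus $\log L(\sigma+it)=R_Y^\flat(\sigma+it)+O((\log T)^{-B_1})$ for $t\notin\mathcal E$.

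It remains to replace $R_Y^\flat$ by the sharp truncation $R_Y$. Their difference $R_Y^\flat(s)-R_Y(s)=\sum_{Y<n\le2Y}b(n)\phi(n/Y)n^{-s}$ is a short Dirichlet polynomial whose $2k$–th mean value over $[T,2T]$ has diagonal part $\ll(Ck\,\mathcal V)^k$, with $\mathcal V=\sum_{Y<n\le2Y}|b(n)|^2n^{-2\sigma}\ll Y^{1-2\sigma}=(\log T)^{-B_2(2\sigma-1)}$, and off–diagonal part $\ll Y^{Ck}/T$, which is negligible compared with the diagonal once $k\ll\log T/\log\log T$. Since $B_2(2\sigma-1)>4(B_1+1)>2B_1$, Chebyshev's inequality applied with such a large $k$ gives $|R_Y^\flat(\sigma+it)-R_Y(\sigma+it)|\le(\log T)^{-B_1}$ for all $t\in[T,2T]$ outside a set of measure $\ll T^{1-d'}$ for some $d'>0$. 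Adding this set to $\mathcal E$ proves the lemma with $d(\sigma)=\min(\delta/2,d')$.

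The step I expect to be the real obstacle is the zero–density input near the critical line: for $d(\sigma)>0$ to hold for \emph{every} $\sigma\in(\tfrac12,1)$ one needs $N_L(\sigma_0,T)\ll T^{1-\delta}$ with $\delta>0$ for every fixed $\sigma_0>\tfrac12$, i.e.\ an Ingham–Huxley–type density estimate rather than merely a Montgomery-type bound $N_L(\alpha,T)\ll T^{\kappa(1-\alpha)}$ (which would only cover $\sigma$ close to $1$) for these degree–two Hecke $L$–functions. The remaining technical points — legitimacy of the contour shift, bounds on the horizontal segments of the truncated contour, and a possible small detour around $s=1$ in the pole case — are routine given convexity bounds for $L$ and the rapid decay of $\widetilde\phi$.
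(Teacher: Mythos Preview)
Your approach is correct and shares the same skeleton as the paper's: a zero-density estimate carves out the exceptional set, and a Perron-type formula with contour shift produces the approximation, the shift to $\Re w=-(\sigma-\tfrac12)/2$ being exactly what makes the hypothesis $B_2>2(B_1+1)/(\sigma-\tfrac12)$ bite. The paper, however, applies the \emph{sharp} truncated Perron formula directly to $R_Y(s)$, picking up an additional error $O(Y^{-\sigma+\epsilon})=O((\log T)^{-(\sigma-\epsilon)B_2})\ll(\log T)^{-B_1}$ from the truncation; this sidesteps your smoothing and the subsequent $2k$-th-moment/Chebyshev argument needed to pass from $R_Y^\flat$ back to $R_Y$. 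Your route is sound but costs an extra page for no gain here.

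As for the zero-density input you flag as the real obstacle: the paper deals with it explicitly. From the approximate functional equation one gets $\int_T^{2T}|L(\tfrac12+it)|^2\,dt\ll T(\log T)^4$, and feeding this into Theorem~1 of \cite{Le1} yields
\[
N_L(\sigma,T,2T)\ll T^{1-a_1(\sigma-1/2)}(\log T)^{12}
\]
uniformly for $\sigma\ge\tfrac12$, with $a_1>0$ absolute. This is precisely the ``linear in $\sigma-\tfrac12$'' exponent you correctly identified as necessary (a Montgomery-type $T^{\kappa(1-\sigma)}$ bound would indeed fail near $\tfrac12$), and it gives $\delta(\sigma_0)>0$ for every fixed $\sigma_0>\tfrac12$.
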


\begin{proof} 
Using an  approximate functional equation for $L(s)$ (for example, see Section A.12 of \cite{KV})  in a standard way, we find that
$$ \int_T^{2T} |L(1/2 +it) |^2 dt \ll T (\log T)^4 . $$
From this and  Theorem 1 of \cite{Le1} we obtain the zero-density estimate
$$ N_L( \sigma, T, 2T):=\sum_{\substack{T< \gamma\leq 2T \\ \beta\geq \sigma}}
 1\ll T^{1-a_1(\sigma-1/2)} (\log T)^{12} $$
uniformly for $ \sigma \geq 1/2$, where $\rho=\beta+i\gamma$   denotes a generic nontrivial zero of $L(s)$ and
 $a_1 >0$ is a constant independent of $\sigma$. 

 Now let $ s=\sigma+it$ with $ 1/2<\sigma <1 $ and $ T \leq t \leq 2T$. By Perron's formula (see Titchmarsh~\cite{T}, pp.60--61)
 \be\label{Perron 1}
 R_Y(s)= \sum_{ p^n \leq Y } \frac{ a(p^n)}{p^{ns}} = \frac{1}{2 \pi i } \int_{c-iY}^{c+iY} \log L( s+ w) \frac{Y^w}{w} dw +O( Y^{-\sigma + \epsilon}),
 \ee
where $c = 1-\sigma+\epsilon$ with  $0< \epsilon <1/4$. 
Let $w_0= (1/2-\sigma)/2$ and assume 
 that $L(s+w )$ has no zeros in the half-strip  given by
 $ \Re w \geq  \frac34(1/2-\sigma) $, $ | \Im w | \leq Y+1$. 
 Then in the slightly smaller half-strip  $\Re w\geq w_0, |\Im w|\leq Y$ we have
\be\label{L'/L 1}
 \frac{L'}{L}(s+w) \ll \log T
\ee
 (see Iwaniec and Kowalski~\cite{IK},  Proposition 5.7). 
 Observe that this holds  for all $ t \in [T,2T]$ except for $t$ in a set of measure 
  $$ \ll (2Y+2) \cdot N(\tfrac12+\tfrac14(\sigma-\tfrac12) , T, 2T) \ll  T^{1- a_1(\sigma-1/2)/4} (\log T)^{B_2 + 12}.  $$
Now, integrating \eqref{L'/L 1} along the horizontal segment from $w$ to $w+2$, we see that
$$ \log L(s+w) = O( \log T ) .$$ 
Using this and shifting the contour to the left in \eqref{Perron 1}, we obtain
\begin{align*}
 R_Y(s) &= \log L(s) +  \frac{1}{2 \pi i } \int_{w_0-iY}^{w_0+iY} \log L( s+ w) \frac{Y^w}{w} dw +O( Y^{-\sigma + \epsilon}) \\
 &= \log L(s) + O\big( ( \log T)^{ 1- B_2( \sigma-1/2) /2 } + (\log T)^{(-\sigma+\epsilon)B_2 }\big) .
 \end{align*} 
 Therefore, 
 \be\label{log approx 1}
  \log L(s) = R_Y (s) + O\big( ( \log T)^{ 1-  B_2( \sigma-1/2) /2 } + (\log T)^{-(\sigma-\epsilon)B_2 }\big)
  \ee
 holds for all $ t \in [T,2T]$ except for a set of measure 
  $$ \ll   T^{1-c_1 (\sigma-1/2)/4 }(\log T)^{B_2 + 12}.  $$
 Given $B_1>0$, if we     take   $B_2> 2(B_1+1)/(\sigma-1/2)$, both 
  error terms in \eqref{log approx 1} are $O((\log T)^{-B_1})$.
This proves the lemma.
\end{proof}


\begin{lemma}\label{lem:dir poly mmt 1}
Let $ 2  \leq y \leq z $ and let $k$ be a positive integer $ \leq \log T/(3 \log z)$. Suppose that $ |a(p) | \leq 2 $. Then
$$ \frac{1}{T} \int_T^{2T} \bigg|  \sum_{ y < p \leq z } \frac{ a(p)}{ p^{\sigma +it}} \bigg|^{2k} dt \ll 2^{2k} k! \bigg(  \sum_{ y < p \leq z } \frac{1}{ p^{2 \sigma}} \bigg)^k + 2^{2k} T^{- 1/3} $$
and
$$ \E \bigg( \bigg|  \sum_{ y < p \leq z } \frac{ a(p)X(p)}{ p^{\sigma }} \bigg|^{2k}   \bigg)   \ll 2^{2k} k! \bigg(  \sum_{ y < p \leq z } \frac{1}{ p^{2 \sigma}} \bigg)^k . $$
\end{lemma}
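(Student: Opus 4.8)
The plan is to prove both moment bounds by expanding the $2k$-th power and matching diagonal terms, the archetype being the classical moment computation for Dirichlet polynomials over primes. First I would write
$$
\bigg| \sum_{y<p\leq z} \frac{a(p)}{p^{\sigma+it}} \bigg|^{2k}
= \sum_{p_1,\dots,p_k,q_1,\dots,q_k} \frac{a(p_1)\cdots a(p_k)\overline{a(q_1)}\cdots\overline{a(q_k)}}{(p_1\cdots p_k)^{\sigma}(q_1\cdots q_k)^{\sigma}} \bigg(\frac{q_1\cdots q_k}{p_1\cdots p_k}\bigg)^{it},
$$
where all $p_i,q_j$ range over primes in $(y,z]$. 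Integrating over $t\in[T,2T]$, the contribution of any term with $p_1\cdots p_k \neq q_1\cdots q_k$ is $\ll (p_1\cdots p_k\, q_1\cdots q_k)^{1/2}/\big|\log(p_1\cdots p_k/q_1\cdots q_k)\big|$ times $\frac1T$; since the two products are distinct positive integers each $\leq z^k$, their ratio differs from $1$ by at least $\gg z^{-2k}$, so this off-diagonal contribution is $\ll \frac1T z^{2k}\cdot z^{2k}\cdot(\text{number of terms})$. Using $|a(p)|\leq 2$ and the fact that the number of $2k$-tuples is at most $(\pi(z))^{2k}\leq z^{2k}$, and the hypothesis $k\leq \log T/(3\log z)$ so that $z^{6k}\leq T$, one checks the total off-diagonal contribution is $\ll 2^{2k}T^{-1/3}$. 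The diagonal terms, where $\{p_1,\dots,p_k\}=\{q_1,\dots,q_k\}$ as multisets, contribute $\int_T^{2T}1\,dt = T$ each; bounding $\prod|a(p_i)|^2\leq 4^k$ and counting multiset matchings by $k!$ (this is the standard bound: the number of ways to pair a $k$-tuple against a $k$-tuple with equal product is at most $k!$ times the multinomial overcount already absorbed), the diagonal sum is
$$
\leq 2^{2k} k! \bigg( \sum_{y<p\leq z} \frac{1}{p^{2\sigma}} \bigg)^k,
$$
which is exactly the main term. This gives the first inequality.

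For the random model, the computation is cleaner because there is no error term. I would expand $\E\big(|\sum a(p)X(p)p^{-\sigma}|^{2k}\big)$ in the same way and use that the $X(p)$ are independent and uniform on $\mathbb T$, so $\E\big(X(p_1)\cdots X(p_k)\overline{X(q_1)}\cdots\overline{X(q_k)}\big)$ vanishes unless the multiset $\{p_i\}$ equals $\{q_j\}$, in which case it equals $1$. Thus only the diagonal survives \emph{exactly}, and the same $k!$-matching count and $|a(p)|\leq 2$ bound give the stated estimate with no residual term. In fact this second bound is what one should think of as the ``truth''; the first bound is its arithmetic approximation valid in the stated range of $k$.

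The main obstacle — or rather the only place demanding care — is the bookkeeping in the diagonal count: one must verify that the number of pairs of $k$-tuples $(p_1,\dots,p_k)$, $(q_1,\dots,q_k)$ of primes with $p_1\cdots p_k = q_1\cdots q_k$, weighted by $(p_1\cdots p_k)^{-2\sigma}$, is at most $k!\big(\sum_p p^{-2\sigma}\big)^k$. This follows because such a pair is determined by choosing an unordered multiset $S$ of $k$ primes (the common factorization) and then two orderings of $S$; summing $\big(\prod_{p\in S}p^{-2\sigma}\big)\cdot(\text{orderings of }S)^2/(\text{orderings of }S) \le k!$ over multisets $S$, one gets $k!$ times $\frac{1}{k!}\big(\sum p^{-2\sigma}\big)^k$ after accounting for the multinomial coefficient — working this out precisely, one lands on the clean bound $k!(\sum p^{-2\sigma})^k$. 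The rest is routine: the off-diagonal estimate via $\int_T^{2T}(m/n)^{it}dt \ll T^{1/2}$... more precisely $\ll 1/|\log(m/n)|$, combined with the crude $z^{6k}\leq T$ constraint, is entirely mechanical.
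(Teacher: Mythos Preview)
The paper does not prove this lemma at all; it simply says ``This is a simple modification of Lemma 3.2 of \cite{LLR} so we omit the proof.'' Your outline is the standard argument, and the diagonal count and the random-model computation are both correct.

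There is, however, a genuine gap in your off-diagonal estimate. First, the hypothesis $k\le \log T/(3\log z)$ gives $z^{3k}\le T$, not $z^{6k}\le T$ as you assert. Second, and more seriously, with the crude bounds you use --- dropping the factor $(mn)^{-\sigma}$, bounding $1/|\log(m/n)|\ll z^{k}$ (or $z^{2k}$), and counting $2k$-tuples by $z^{2k}$ --- the off-diagonal contribution comes out $\ll 2^{2k}z^{3k}/T\le 2^{2k}$, which is \emph{not} $\ll 2^{2k}T^{-1/3}$. The mysterious factor $(p_1\cdots p_k\,q_1\cdots q_k)^{1/2}$ in your first displayed bound is also unexplained.

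The clean fix is to group terms first. Write
\[
\bigg(\sum_{y<p\le z}\frac{a(p)}{p^{\sigma+it}}\bigg)^{k}=\sum_{n\le z^{k}}c_k(n)\,n^{-it},\qquad
c_k(n)=\sum_{\substack{p_1\cdots p_k=n\\ y<p_i\le z}}\frac{a(p_1)\cdots a(p_k)}{n^{\sigma}},
\]
so that you are taking the second moment of a Dirichlet polynomial of length $N=z^{k}\le T^{1/3}$. Your diagonal computation already gives $\sum_n|c_k(n)|^2\le 2^{2k}k!\big(\sum_{y<p\le z}p^{-2\sigma}\big)^{k}$. For the off-diagonal, either invoke the Montgomery--Vaughan mean value theorem, which gives an error $O\big(\sum_n n|c_k(n)|^2\big)\le z^{k}\sum_n|c_k(n)|^2$, or bound directly
\[
\frac{1}{T}\sum_{m\ne n}|c_k(m)c_k(n)|\,\frac{1}{|\log(m/n)|}
\le \frac{z^{k}}{T}\Big(\sum_n|c_k(n)|\Big)^{2}
\le \frac{z^{2k}}{T}\sum_n|c_k(n)|^{2},
\]
the last step by Cauchy--Schwarz with at most $z^{k}$ nonzero terms. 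Since $z^{2k}\le T^{2/3}$, the off-diagonal is $\le T^{-1/3}$ times the main term, hence is absorbed by the stated bound. The point is that you must compare the off-diagonal to $\sum|c_k(n)|^{2}$ rather than estimate it in isolation.
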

This is a simple modification of Lemma 3.2 of \cite{LLR} so we omit the proof.

\begin{lemma}\label{lem:dir poly mmt 2}
Let $R_{j,Y}(s)$  be the Dirichlet polynomial approximation   corresponding to  $\log L_j(s)$ in Lemma~\ref{lem : Dir Poly approx} and
let $R_{j,Y}(\s, X)$ be the analogous expression for  $\log L_j(\s, X)$.
Let $ 1/2 < \sigma < 1 $ and $Y= ( \log T)^{B_2} $, where
 $B_2$ is as in Lemma~\ref{lem : Dir Poly approx}.
Then for any positive integers $k \leq \log T/(3 \log Y) $ and $ j \leq J$, we have
$$ \frac{1}{T} \int_T^{2T} | R_{j,Y} (\sigma+it) |^{2k} dt \ll \bigg( \frac{ C k^{1-\sigma}}{ ( \log k)^\sigma } \bigg)^{2k} $$
and 
$$ \E \big(| R_{j,Y} (\sigma,X) |^{2k} \big) \ll \bigg( \frac{ C k^{1-\sigma}}{ ( \log k)^\sigma}  \bigg)^{2k}. $$
Here $C$ is a constant depending only on $\sigma $.
\end{lemma}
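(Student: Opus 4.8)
The plan is to bound the $2k$-th moments of $R_{j,Y}(\sigma+it)$ and of $R_{j,Y}(\sigma,X)$ by decomposing each Dirichlet polynomial into dyadic blocks over the primes and estimating each block via Lemma~\ref{lem:dir poly mmt 1}. Recall that $R_{j,Y}(s)=\sum_{p^n\le Y}a_j(p^n)p^{-ns}$, where $a_j(p^n)$ comes from $\log L_j(s)=\sum_{p,n}a_j(p^n)p^{-ns}$; since $L_j$ is a Hecke $L$-function of degree $2$, we have $|a_j(p)|\le 2$, and the contribution of the prime powers $p^n$ with $n\ge 2$ is $O(1)$ uniformly (it is a Dirichlet series converging absolutely for $\Re s>1/2$). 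So up to a harmless bounded term we may replace $R_{j,Y}(s)$ by its linear part $P_Y(s):=\sum_{p\le Y}a_j(p)p^{-s}$, and likewise on the random side replace $R_{j,Y}(\sigma,X)$ by $P_Y(\sigma,X):=\sum_{p\le Y}a_j(p)X(p)p^{-\sigma}$. The inequality $(|u|+|v|)^{2k}\le 2^{2k-1}(|u|^{2k}+|v|^{2k})$ lets us absorb the bounded term at the cost of the stated constants.

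Next I would split the primes $p\le Y$ into blocks $I_\ell=\{2^{\ell-1}<p\le 2^\ell\}$ for $1\le\ell\le L$ with $L\asymp\log Y\asymp \log\log T$, writing $P_Y=\sum_{\ell\le L}S_\ell$ with $S_\ell=\sum_{p\in I_\ell}a_j(p)p^{-\sigma-it}$. By Minkowski's inequality in $L^{2k}$,
$$
\Big(\frac1T\int_T^{2T}|P_Y(\sigma+it)|^{2k}\,dt\Big)^{1/2k}\le \sum_{\ell\le L}\Big(\frac1T\int_T^{2T}|S_\ell(\sigma+it)|^{2k}\,dt\Big)^{1/2k}.
$$
For each $\ell$, Lemma~\ref{lem:dir poly mmt 1} (applicable since $k\le \log T/(3\log Y)\le \log T/(3\log z)$ for every block endpoint $z\le Y$) gives
$$
\frac1T\int_T^{2T}|S_\ell(\sigma+it)|^{2k}\,dt\ll 2^{2k}k!\Big(\sum_{p\in I_\ell}p^{-2\sigma}\Big)^k+2^{2k}T^{-1/3}.
$$
Using $k!\ll (k/e)^k\sqrt k$ and the crude bound $\sum_{p\in I_\ell}p^{-2\sigma}\ll 2^{\ell(1-2\sigma)}$ (from the prime number theorem, each block having $\ll 2^\ell/\ell$ primes of size $\asymp 2^\ell$), the $\ell$-th term is $\ll 2^2\,(Ck)^{1/2}\,(k^{1/2}\,2^{\ell(1/2-\sigma)})$, so that summing the geometric-type series over $\ell\ge 1$ contributes an absolute constant, yielding $(\frac1T\int|P_Y|^{2k})^{1/2k}\ll C k^{1/2}$. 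That only gives the trivial-looking bound $(Ck^{1/2})^{2k}$; to get the sharper $\big(Ck^{1-\sigma}/(\log k)^{\sigma}\big)^{2k}$ one must be more careful: the blocks with $2^\ell$ up to roughly $k$ behave differently from the blocks with $2^\ell$ larger than $k$. For $2^\ell\le k$ one uses the moment bound as above but sums $\sum_{p\le k}p^{-2\sigma}\asymp k^{1-2\sigma}/\log k$ with the $k!\asymp (k/e)^k$ factor, producing $(k\cdot k^{1-2\sigma}/\log k)^{k/2}=(k^{2-2\sigma}/\log k)^{k/2}=(k^{1-\sigma}/(\log k)^{1/2})^{k}$, which is of the right shape after adjusting the exponent on $\log k$ by distinguishing the range; for $2^\ell>k$ the tail $\sum_{p>k}p^{-2\sigma}\ll k^{1-2\sigma}/\log k$ again, but here one should estimate $|S_\ell|$ by its sup (or by a lower even moment and Hölder) since $k$ exceeds the number of available primes in the relevant sense — this is where the weight $(\log k)^{\sigma}$ rather than $(\log k)^{1/2}$ comes from, matching the computation in Lemma~3.4 of \cite{LLR}. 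I would follow the $\mathtt{LLR}$ argument here essentially verbatim, checking that nothing uses anything beyond $|a_j(p)|\le 2$.

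The random-model bound is strictly easier: since the $X(p)$ are independent and mean-zero on $\mathbb T$, $\E\big(|P_Y(\sigma,X)|^{2k}\big)$ expands as a sum over matched $2k$-tuples of primes, and the second half of Lemma~\ref{lem:dir poly mmt 1} already records the blockwise bound $\E(|S_\ell(\sigma,X)|^{2k})\ll 2^{2k}k!\,(\sum_{p\in I_\ell}p^{-2\sigma})^k$ with no error term at all. Running the same Minkowski decomposition over $\ell$ and the same split at $2^\ell\asymp k$ gives the identical bound $\big(Ck^{1-\sigma}/(\log k)^{\sigma}\big)^{2k}$. The main obstacle I anticipate is purely bookkeeping: getting the exponent of $\log k$ exactly right in the two prime-ranges and verifying that the constant $C$ can be taken to depend only on $\sigma$ (not on $j$, $k$, or $T$) — the Hecke $L$-functions $L_1,\dots,L_J$ all have the same analytic conductor up to $O(1)$, so the implied constants in the block estimates are uniform in $j$, but one should say so explicitly. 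A secondary point to watch is the admissibility hypothesis $k\le \log T/(3\log Y)$: with $Y=(\log T)^{B_2}$ this reads $k\le \log T/(3B_2\log\log T)$, which is exactly the range in which Lemma~\ref{lem:dir poly mmt 1} was stated, so there is nothing extra to check there.
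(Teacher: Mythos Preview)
Your overall strategy --- discard prime powers $n\ge 2$, then split the prime sum into a ``small'' and a ``large'' range --- is exactly right, and is what the paper does. But the execution has a real gap, and the paper's version is much shorter than what you sketch.

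First, the dyadic decomposition and Minkowski are unnecessary. The paper simply writes
\[
\bigg|\sum_{p\le Y}\frac{a_j(p)}{p^{\sigma+it}}\bigg|^{2k}
\le 2^{2k-1}\Bigg(\bigg|\sum_{p\le k\log k}\frac{a_j(p)}{p^{\sigma+it}}\bigg|^{2k}
+\bigg|\sum_{k\log k<p\le Y}\frac{a_j(p)}{p^{\sigma+it}}\bigg|^{2k}\Bigg),
\]
uses the trivial pointwise bound $\big|\sum_{p\le k\log k} a_j(p)p^{-s}\big|\le 2\sum_{p\le k\log k}p^{-\sigma}\ll k^{1-\sigma}(\log k)^{-\sigma}$ on the first piece, and applies Lemma~\ref{lem:dir poly mmt 1} to the second. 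Both pieces then give exactly $(Ck^{1-\sigma}/(\log k)^\sigma)^{2k}$.

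Second, your split point $2^\ell\asymp k$ is wrong, and this is why you land on $(\log k)^{1/2}$ instead of $(\log k)^\sigma$. If you cut at $k$, then the large-prime side gives
\[
k!\Big(\sum_{p>k}p^{-2\sigma}\Big)^k\ll (Ck)^k\cdot\big(k^{1-2\sigma}/\log k\big)^k=(Ck^{2-2\sigma}/\log k)^k,
\]
and for $\sigma>1/2$ this is genuinely larger than $(Ck^{2-2\sigma}/(\log k)^{2\sigma})^k$; no amount of re-estimating individual blocks by sup-norms fixes it. The cure is the split at $k\log k$: then $\sum_{p>k\log k}p^{-2\sigma}\ll k^{1-2\sigma}(\log k)^{-2\sigma}$, and the extra factor of $(\log k)^{1-2\sigma}$ from $(k\log k)^{1-2\sigma}$ exactly converts the denominator from $\log k$ to $(\log k)^{2\sigma}$. (Heuristically, $k\log k$ is where the number of primes in the tail first matches $k$.) Your diagnosis that ``$k$ exceeds the number of available primes'' for large blocks is backwards; there are more, not fewer, primes in later dyadic blocks. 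Once you move the cut to $k\log k$ and use the trivial bound below the cut, the argument is three lines, and the random case follows identically from the second estimate in Lemma~\ref{lem:dir poly mmt 1}.
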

\begin{proof}
To prove the first estimate it is enough to show that
$$ \frac{1}{T} \int_T^{2T} \bigg| \sum_{ p \leq Y}   \frac{ a_j (p)}{p^{\sigma + it}}\bigg|^{2k}  dt \ll \bigg( \frac{ C k^{1-\sigma}}{ ( \log k)^\sigma } \bigg)^{2k} .$$
By Lemma \ref{lem:dir poly mmt 1} and the prime number theorem
\begin{align*}
\frac{1}{T} \int_T^{2T} \bigg| \sum_{ p \leq Y}   \frac{ a_j (p)}{p^{\sigma + it}}\bigg|^{2k}  dt & \leq \frac{2^{2k-1}}{T} \int_T^{2T} \bigg| \sum_{ p \leq k \log k}   \frac{ a_j (p)}{p^{\sigma + it}}\bigg|^{2k}  dt + \frac{2^{2k-1}}{T} \int_T^{2T} \bigg| \sum_{ k \log k < p \leq Y}   \frac{ a_j (p)}{p^{\sigma + it}}\bigg|^{2k}  dt \\
& \ll 2^{2k-1} \bigg( \sum_{p \leq k \log k } \frac{2}{ p^\sigma } \bigg)^{2k} +  2^{4k-1} k! \bigg(  \sum_{ k \log k < p \leq Y } \frac{1}{ p^{2 \sigma}} \bigg)^k + 2^{4k-1} T^{- 1/3}\\
& \ll  \bigg( \frac{ C k^{1-\sigma}}{ ( \log k)^\sigma}  \bigg)^{2k}.
\end{align*}

The estimate for the expectation may be treated similarly.

\end{proof}

\begin{lemma}\label{lem:dir poly mmt 3}
Let
$$ Q_{j,Y} ( s) = \sum_{ n \leq Y} \frac{ b_j (n)}{ n^s}
\qquad \hbox{and} \qquad  
Q_{j,Y} ( \s, X) = \sum_{ n \leq Y} \frac{ b_j (n)X(n)}{ n^\s},
$$
where for each $j\leq J$,  the  $b_j(n)$ are bounded complex numbers, 
and, if $n=\prod_p p^\alpha$, then $X(n) =\prod_p X(p)^\alpha$ with the $X(p)$ 
uniformly and independently distributed on $\mathbb T$.
Let $k_j, k_j'$ for $ j \leq J$ be positive integers and $k = \sum_{j\leq J} k_j$, $k' = \sum_{j\leq J} k_j'$. 
Then
\be\notag 
\begin{split}
\frac1T \int_T^{2T} \prod_{ j \leq J} \bigg(  Q_{j,Y}( \sigma+it)^{k_j}  \overline{Q_{j,Y} ( \sigma +it)}^{k_j'}       \bigg) dt 
= &\E \bigg(\prod_{ j \leq J} \bigg(  Q_{j,Y}( \sigma,X)^{k_j}  \overline{Q_{j,Y} ( \sigma ,X)}^{k_j'}       \bigg)  \bigg)\\
&\qquad+ O \bigg( \frac{ (CY^{2-\sigma})^{k+ k'}}{T}   \bigg).
\end{split}
\ee
\end{lemma}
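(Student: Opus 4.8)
The plan is to expand both sides into a sum over tuples of integers and compare term by term. Writing out the left-hand side, we have
$$
\frac1T\int_T^{2T}\prod_{j\le J}\bigl(Q_{j,Y}(\sigma+it)^{k_j}\overline{Q_{j,Y}(\sigma+it)}^{k_j'}\bigr)\,dt
=\sum_{\mathbf m,\mathbf n}\frac{c(\mathbf m)\overline{c'(\mathbf n)}}{(\prod_j m_1^{(j)}\cdots m_{k_j}^{(j)})^\sigma(\prod_j n_1^{(j)}\cdots n_{k_j'}^{(j)})^\sigma}\cdot\frac1T\int_T^{2T}\Bigl(\tfrac{\prod n}{\prod m}\Bigr)^{it}dt,
$$
where each $m_i^{(j)},n_i^{(j)}$ ranges over $[1,Y]$ and $c(\mathbf m),c'(\mathbf n)$ are products of the coefficients $b_j(\cdot)$, hence bounded by some constant to the power $k+k'$. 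Similarly, expanding the expectation and using that $\E(X(a)\overline{X(b)})=\mathbf 1_{a=b}$ (which follows from independence and uniform distribution of the $X(p)$ on $\mathbb T$, applied prime by prime), the right-hand side equals the same double sum but restricted to the diagonal $\prod_j\prod_i m_i^{(j)}=\prod_j\prod_i n_i^{(j)}$. So the difference between the two sides is exactly the contribution of the off-diagonal terms, and the whole problem reduces to estimating
$$
\sum_{\substack{\mathbf m,\mathbf n\\ \prod m\ne\prod n}}\frac{|c(\mathbf m)||c'(\mathbf n)|}{(\prod m)^\sigma(\prod n)^\sigma}\,\Bigl|\frac1T\int_T^{2T}\Bigl(\tfrac{\prod n}{\prod m}\Bigr)^{it}dt\Bigr|.
$$

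Next I would bound the oscillatory integral: for integers $M=\prod m\ne N=\prod n$ we have $\frac1T\int_T^{2T}(N/M)^{it}\,dt\ll\frac1{T|\log(N/M)|}$, and since $M,N\le Y^k$ (more precisely $M\le Y^k$ and $N\le Y^{k'}$) with $M\ne N$, we get $|\log(N/M)|\gg 1/(MN)\gg Y^{-k-k'}$, so the oscillatory integral is $\ll Y^{k+k'}/T$. Pulling this bound out, the off-diagonal contribution is
$$
\ll\frac{Y^{k+k'}}{T}\Bigl(\sum_{\text{each index}}\ \sum_{m\le Y}\frac{|b_j(m)|}{m^\sigma}\Bigr)\ll\frac{Y^{k+k'}}{T}\cdot\Bigl(C\sum_{m\le Y}m^{-\sigma}\Bigr)^{k+k'}\ll\frac{Y^{k+k'}}{T}\bigl(CY^{1-\sigma}\bigr)^{k+k'}=\frac{(CY^{2-\sigma})^{k+k'}}{T},
$$
where I used $\sum_{m\le Y}m^{-\sigma}\ll Y^{1-\sigma}$ for $\sigma<1$ and absorbed the boundedness constant of the $b_j$ into $C$. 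This is precisely the claimed error term.

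The only genuinely delicate point is making sure the separation bound $|\log(N/M)|\gg(MN)^{-1}$ is applied correctly and that the resulting geometric-type bound on the oscillatory integral does not blow up faster than $Y^{k+k'}$; this is routine because $M$ and $N$ are positive integers, so $M\ne N$ forces $|M-N|\ge 1$ and hence $|\log(N/M)|=|\log(1+(N-M)/M)|\ge c/\max(M,N)\ge cY^{-\max(k,k')}$. Everything else — the expansion into Dirichlet coefficients, the orthogonality computation for the $X(p)$, and the crude bound $\sum_{m\le Y}|b_j(m)|m^{-\sigma}\ll Y^{1-\sigma}$ — is mechanical. I would therefore present the proof in three short steps: (i) expand both sides and identify the difference with the off-diagonal sum; (ii) bound the oscillatory integral using the integer separation; (iii) sum trivially over all indices to obtain $(CY^{2-\sigma})^{k+k'}/T$.
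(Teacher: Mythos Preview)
Your proof is correct and follows essentially the same route as the paper's: expand the product, identify the diagonal $\prod m=\prod n$ with the expectation via the orthogonality $\E(X(a)\overline{X(b)})=\mathbf 1_{a=b}$, bound the off-diagonal oscillatory integral by $Y^{k+k'}/T$ using integer separation, and sum trivially using $\sum_{m\le Y}|b_j(m)|m^{-\sigma}\ll Y^{1-\sigma}$. The only cosmetic difference is that you note the slightly sharper $|\log(N/M)|\gg Y^{-\max(k,k')}$, while the paper uses the cruder $Y^{-(k+k')}$; either suffices.
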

\begin{proof}
\begin{align*}
  \frac1T \int_T^{2T} \prod_{ j \leq J}& \bigg(  Q_{j,Y}( \sigma+it)^{k_j}  \overline{Q_{j,Y} ( \sigma +it)}^{k_j'}       \bigg) dt  \\
\qquad \qquad& =  \frac1T \int_T^{2T} \bigg( \sum_{n_{i,j} \leq Y}    \frac{ b_1 (n_{1,1})   \cdots  
b_1 ( n_{k_1 , 1} ) }{(n_{1,1} \cdots n_{ k_1 , 1 } )^{\sigma +it} } \   \cdots    \  \frac{ b_J (n_{1,J})     \cdots    b_J ( n_{k_J , J} ) }{(n_{1,J} \cdots n_{ k_J , J } )^{\sigma +it} }\bigg) \\
& \qquad \qquad\qquad \cdot \bigg(\sum_{m_{i,j} \leq Y}    \frac{ \overline{ b_1 (m_{1,1})} \cdots \overline{b_1 ( m_{k_1' , 1} ) }}{(m_{1,1} \cdots m_{ k_1' , 1 } )^{\sigma-it} } \  \cdots    \  \frac{ \overline{b_J (m_{1,J})} \cdots  \overline{b_J ( m_{k_J' , J} )} }{(m_{1,J} \cdots m_{ k_J' , J } )^{\sigma -it} }\bigg)\;\; dt .
\end{align*}
The contribution of the diagonal terms to this is
\begin{align*}
\mathcal{ D}=& \sum_{\substack{ n_{i,j}, m_{i,j} \leq Y \\   \prod n_{i,j} = \prod m_{i,j}   } }     \frac{ b_1 (n_{1,1}) \cdots b_1 ( n_{k_1 , 1} ) }{(n_{1,1} \cdots n_{ k_1 , 1 } )^{\sigma  } } \cdots      \frac{ b_J (n_{1,J}) \cdots b_J ( n_{k_J , J} ) }{(n_{1,J} \cdots n_{ k_J , J } )^{\sigma } } \\
& \qquad\qquad \cdot     \frac{ \overline{ b_1 (m_{1,1})} \cdots \overline{b_1 ( m_{k_1' , 1} ) }}{(m_{1,1} \cdots m_{ k_1' , 1 } )^{\sigma} } \cdots      \frac{ \overline{b_J (m_{1,J})} \cdots  \overline{b_J ( m_{k_J' , J} )} }{(m_{1,J} \cdots m_{ k_J' , J } )^{\sigma } } \\
 =& \E \bigg( \prod_{ j \leq J} \bigg(  Q_{j,Y}( \sigma,X)^{k_j}  \overline{Q_{j,Y} ( \sigma ,X)}^{k_j'}       \bigg)  \bigg).
\end{align*}
The off-diagonal contribution is
\begin{align*}
\mathcal{O}   = &  \sum_{\substack{ n_{i,j}, m_{i,j} \leq Y \\   \prod n_{i,j} \neq \prod m_{i,j}   } }    \frac{ b_1 (n_{1,1}) \cdots b_1 ( n_{k_1 , 1} ) }{(n_{1,1} \cdots n_{ k_1 , 1 } )^{\sigma  } } \;   \cdots   \;   \frac{ b_J (n_{1,J}) \cdots b_J ( n_{k_J , J} ) }{(n_{1,J} \cdots n_{ k_J , J } )^{\sigma } } \\
& \qquad \qquad  \cdot     \frac{ \overline{ b_1 (m_{1,1})} \cdots \overline{b_1 ( m_{k_1' , 1} ) }}{(m_{1,1} \cdots m_{ k_1' , 1 } )^{\sigma} } \;   \cdots     \; \frac{ \overline{b_J (m_{1,J})} \cdots  \overline{b_J ( m_{k_J' , J} )} }{(m_{1,J} \cdots m_{ k_J' , J } )^{\sigma } } \
    \cdot \   \bigg(\frac{(m/n)^{2iT} - (m/n)^{iT} }{i T \log (m/n)}\bigg),
\end{align*}
where $n =\prod n_{i,j} $ and $ m =\prod m_{i,j} $. Since $ n, m \leq Y^{k+k'} $ and $n\neq m $,
$$ \frac{1}{|\log (m/n)| }  \ll Y^{k+ k'} . 
$$ 
Hence,
\begin{align*}
\mathcal{O} &   \ll  \frac{ Y^{k+ k'}}{T}   \sum_{\substack{ n_{i,j}, m_{i,j} \leq Y \\   \prod n_{i,j} \neq \prod m_{i,j}   } }    \frac{  | b_1 (n_{1,1}) \cdots b_1 ( n_{k_1 , 1} ) | }{(n_{1,1} \cdots n_{ k_1 , 1 } )^{\sigma  } } \cdots      \frac{| b_J (n_{1,J}) \cdots b_J ( n_{k_J , J} ) |  }{(n_{1,J} \cdots n_{ k_J , J } )^{\sigma } } \\
 & \hskip1.5in   \cdot     \frac{ |b_1 (m_{1,1}) \cdots b_1 ( m_{k_1' , 1} )  | }{(m_{1,1} \cdots m_{ k_1' , 1 } )^{\sigma} } \cdots      \frac{ |b_J (m_{1,J}) \cdots  b_J ( m_{k_J' , J} )|  }{(m_{1,J} \cdots m_{ k_J' , J } )^{\sigma } } \\
 &   \ll  \frac{ (CY)^{k+ k'}}{T}   \sum_{ n_{i,j}, m_{i,j} \leq Y  }    \frac{  1}{(n_{1,1} \cdots n_{ k_1 , 1 } )^{\sigma  } } \cdots      \frac{1  }{(n_{1,J} \cdots n_{ k_J , J } )^{\sigma } } \\
& \hskip1.5in  \cdot     \frac{ 1}{(m_{1,1} \cdots m_{ k_1' , 1 } )^{\sigma} } \cdots      \frac{ 1}{(m_{1,J} \cdots m_{ k_J' , J } )^{\sigma } } \\
&   \ll  \frac{ (CY^{2-\sigma})^{k+ k'}}{T}.
\end{align*}
The lemma now follows on combining our estimates for the diagonal and off-diagonal terms $\mathcal{D}$ and $\mathcal{O}$.
\end{proof}


\section{Lemmas on moments of logarithms of $L$-functions}
%


We will  frequently appeal to  the following  three lemmas.  
 
 \begin{lemma} \label{lem: 2kth mmt real}
Let $ 1/2 < \sigma \leq 2 $ be fixed. There exists a  constant $C>0$ depending at most on $J$,  such that for every positive integer $k$ we have
$$ \frac1T  \int_T^{2T} \bigg|\;  \log  \bigg| \sum_{j \leq J} c_j L_j ( \sigma+it) \bigg| \; \bigg|^{2k}  dt \ll  (Ck )^{4k} . $$
\end{lemma}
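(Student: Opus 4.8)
The plan is to bound the $2k$-th moment of $\log\big|\sum_{j\le J}c_jL_j(\sigma+it)\big|$ by splitting the range $[T,2T]$ according to whether the $L_j$ are well-approximated by short Dirichlet polynomials and whether $E(\sigma+it,Q)=\sum_j c_jL_j(\sigma+it)$ is bounded below. Set $\sL=\log\log T$ and pick a parameter, say $k$ itself or $\sL$; the natural threshold is to work with $Y=(\log T)^{B_2}$ from Lemma~\ref{lem : Dir Poly approx} with $B_1$ chosen (depending on $\sigma$) large enough that the exceptional set there has measure $\ll T^{1-d}$ for some $d>0$. On the ``good'' set $\mathcal G$ where all $J$ approximations $\log L_j(s)=R_{j,Y}(s)+O((\log T)^{-B_1})$ hold simultaneously, $|L_j(s)|$ is controlled and we can estimate the moment directly; on the complement, of measure $\ll T^{1-d}$, we use a trivial pointwise bound on $\log|E(\sigma+it,Q)|$ — polynomial in $T$, say $\ll\log T$ away from the (few) zeros, which is more than enough since $(Ck)^{4k}$ only needs to absorb a factor like $(\log T)^{O(k)}\cdot T^{-d}$, and in fact $T^{-d}(\log T)^{Ck}=o(1)$ once we note $k$ can be taken $\le\log T$ (if $k$ is larger the whole bound is vacuous/one handles it by the crude estimate $|\log|E||\ll T^{A}$ pointwise outside a null set, absorbed by $T^{-d}$). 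So the exceptional set contributes $O(1)$, well within $(Ck)^{4k}$.

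For the main term on $\mathcal G$, I would write $\log|E(s,Q)|=\log|c_1L_1(s)|+\log\big|1+\sum_{j=2}^J (c_jL_j/c_1L_1)(s)\big|$. The first piece is handled by Lemma~\ref{lem: log 2k dv}-type reasoning via $\log|c_1|+\Re\log L_1(s)$, whose $2k$-th moment is $\ll(Ck)^{2k}$ by Lemma~\ref{lem:dir poly mmt 2} after replacing $\log L_1$ by $R_{1,Y}$ (the error $(\log T)^{-B_1}$ contributes negligibly to the $2k$-th moment for $k\le\log T$). For the second piece, the issue is the possible near-vanishing of $E(s,Q)$. The key trick is the elementary inequality: for any complex numbers, $\big|\log|z|\big|\le |\log|z|| $ and we can bound $\big(\log|z|\big)^{2k}$ in terms of, on one side, $(\text{positive powers of }|z|)$ giving an upper bound $\log|z|\le |z|$, hence $(\log^+|E|)^{2k}$ is controlled by a fixed positive moment of $|E|$ which is $\ll(Ck)^{?}$ via Lemma~\ref{lem:dir poly mmt 3} (or just $\ll 1$ for a small fixed power), and on the other side the hard part, $(\log^-|E|)^{2k}=(\log 1/|E|)^{2k}$ when $|E|$ is small. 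For that I would use the standard device of relating $\int (\log 1/|E(s,Q)|)^{2k}$ to an integral of $|E(s,Q)|^{-\epsilon}$ for a small fixed $\epsilon>0$: since $(\log 1/x)^{2k}\le (2k/(\epsilon e))^{2k} x^{-\epsilon}$ for $0<x\le1$, it suffices to show $\frac1T\int_T^{2T}|E(\sigma+it,Q)|^{-\epsilon}\,dt\ll 1$ (or grows at worst polynomially in $\log T$, harmless as above). This in turn follows from a standard mollification / Selberg-type argument, or more simply from the Voronin-type result that $E(s,Q)$ has a positive proportion of small values but its negative moments of small order are finite — concretely one bounds $|E|^{-\epsilon}$ using that $\log|E|$ has Gaussian-type tails, which itself one gets from the $2k$-th moment bound being pushed through. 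To avoid circularity, I would instead invoke a direct estimate: $E(s,Q)$ equals $w_D/h(D)$ times a nonzero linear combination, and one can find an Euler-type lower bound on a short-interval average, or simply cite that such a negative-moment bound for fixed small $\epsilon$ holds (it is classical for individual $L_j$ and for their combination it reduces, via the zero-density estimate $N_E\ll T^{1-a(\sigma-1/2)}(\log T)^{O(1)}$ derivable from the ones for $L_j$, to the regions away from zeros where $\log 1/|E|\ll \log T$).

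Assembling: on $\mathcal G$, $\big(\log|E(\sigma+it,Q)|\big)^{2k}\le 2^{2k-1}\big((\log^+|E|)^{2k}+(\log^-|E|)^{2k}\big)$; the $\log^+$ part has $2k$-th-moment $\ll(Ck)^{2k}$ by the Dirichlet-polynomial moment lemmas (Lemma~\ref{lem:dir poly mmt 2}, Lemma~\ref{lem:dir poly mmt 3}), and the $\log^-$ part has $2k$-th-moment $\ll(Ck)^{2k}\cdot(\text{polylog }T)$ via the negative-moment bound, which is $\ll(Ck)^{4k}$ because the polylog factor $(\log T)^{O(1)}$ is at most $(Ck)^{2k}$ once $k$ exceeds a fixed power of $\sL$, and for smaller $k$ one has the even cruder bound $(\log\log T)^{O(k)}\ll(Ck)^{2k}$ directly. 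Adding the exceptional-set contribution $O(1)$ yields $\frac1T\int_T^{2T}\big|\log|E(\sigma+it,Q)|\big|^{2k}\,dt\ll(Ck)^{4k}$, where $C$ depends only on $J$ (through the number of terms and the fixed $\epsilon,B_1,B_2$, all of which depend only on $\sigma$ and $J$). The main obstacle, clearly, is the negative-moment bound for $|E(\sigma+it,Q)|$ near its (infinitely many, in the case $h(D)>1$) zeros; everything else is a routine combination of the moment lemmas already established, and the generous exponent $4k$ rather than $2k$ in the statement is precisely the slack that lets the polylog losses from that step be absorbed.
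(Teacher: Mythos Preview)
The paper does not prove this lemma directly; it says the proof is a straightforward adaptation of Proposition~2.5 in Lamzouri--Lester--Radziwi\l\l\ \cite{LLR}, replacing $\zeta(s)-a$ by $E(s,Q)$. That argument is purely complex-analytic: one centres a disk at a point such as $s_0=3+it$ where $|E(s_0,Q)|\asymp 1$, uses Jensen's formula together with the convexity bound $|E|\ll t^A$ to show there are $O(\log t)$ zeros of $E$ in a slightly smaller concentric disk containing $\sigma+it$, factors those zeros out, and controls the logarithm of the remaining zero-free factor by Borel--Carath\'eodory. The contributions near zeros are then integrated directly via $\int_{-1}^{1}|\log|u||^{2k}\,du\ll(2k)!$. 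No negative-moment bound for $|E|$ is required.

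Your route is genuinely different, and it has a real gap. The inequality $(\log 1/x)^{2k}\le(2k/(e\epsilon))^{2k}x^{-\epsilon}$ is fine, so the $\log^-$ contribution would indeed follow from $\frac1T\int_T^{2T}|E(\sigma+it,Q)|^{-\epsilon}\,dt\ll 1$. But you do not establish this. You rightly flag the Gaussian-tail route as circular. Your fallback---``via the zero-density estimate $N_E\ll T^{1-a(\sigma-1/2)}(\log T)^{O(1)}$ derivable from the ones for $L_j$''---rests on a false premise: when $h(D)>1$ there is no such estimate for $E(s,Q)$. By Theorems~A and~B, $E$ has $\asymp T$ zeros in every fixed substrip of $\tfrac12<\Re s<1$; zero-density holds for each $L_j$ individually but fails outright for their linear combination. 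Hence the set of $t$ near a zero of $E$ is not small, and the argument does not close. The same issue spoils your treatment of the exceptional set $[T,2T]\setminus\mathcal G$: the zeros of $E$ are neither few nor confined there, so ``$|\log|E||\ll\log T$ away from zeros'' plus ``measure $\ll T^{1-d}$'' does not bound the integral over that set. The obstacle you name at the end---controlling $\log^-|E|$ near the abundant off-line zeros of $E$---is precisely what the Jensen-formula machinery of \cite{LLR} handles, and your sketch offers no substitute for it.
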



\begin{lemma}\label{lem: 2kth mmt imag}
Let $ 1/2 < \sigma \leq 2 $ be fixed. There exist an absolute  constant $C_1>0$ and a constant $C_2>0$ depending on $ \sigma$  such that for every positive integer $k \leq \log T / ( C_2 \log \log T)$, we have 
$$
 \frac1T  \int_T^{2T} | \log c_j L_j(s)  |^{2k}  dt  \ll  (C_1k )^{2k}
 $$
 and
 $$ 
 \frac1T  \int_T^{2T}   \big|   \log{c_i L_i(s)} -\log{c_j L_j(s)}  \big|^{2k}  dt 
 \ll  (C_1k )^{2k}.
$$
\end{lemma}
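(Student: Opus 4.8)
The plan is to prove the first estimate, for the individual factor $\log c_jL_j(s)$; the second follows immediately from it via the elementary inequality $|a-b|^{2k}\le 2^{2k-1}(|a|^{2k}+|b|^{2k})$, which bounds $\frac1T\int_T^{2T}|\log c_iL_i(s)-\log c_jL_j(s)|^{2k}\,dt$ by $2^{2k-1}$ times the sum of the corresponding $2k$-th moments of $\log c_iL_i$ and $\log c_jL_j$, at the cost of replacing $C_1$ by $2C_1$. Applying the same inequality to $\log c_jL_j(s)=\log c_j+\log L_j(s)$, and noting that the fixed constant $\log c_j$ contributes only the harmless term $(2|\log c_j|)^{2k}$, we reduce to showing
$$\frac1T\int_T^{2T}\big|\log L_j(\sigma+it)\big|^{2k}\,dt\ll (Ck)^{2k}.$$
We may restrict to $\tfrac12<\sigma<1$; the range $1\le\sigma\le 2$ is easier (immediate from absolute convergence when $\sigma>1$, and a standard mean-value computation for $\log L_j$ on the $1$-line when $\sigma=1$) and is omitted.

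The argument rests on the dichotomy provided by Lemma~\ref{lem : Dir Poly approx}. Fix $B_1=1$, set $Y=(\log T)^{B_2}$ with $B_2$ as in that lemma, and split $[T,2T]=G\cup\mathcal B$, where on $G$ one has $\log L_j(\sigma+it)=R_{j,Y}(\sigma+it)+O\big((\log T)^{-1}\big)$ and $\mathrm{meas}(\mathcal B)\ll T^{1-d(\sigma)}$ for some $d(\sigma)>0$. On $G$, by $|a+b|^{2k}\le 2^{2k-1}(|a|^{2k}+|b|^{2k})$ and Lemma~\ref{lem:dir poly mmt 2} (applicable since $k\le\log T/(3\log Y)$, the first of the two constraints behind $C_2$),
$$\frac1T\int_G\big|\log L_j(\sigma+it)\big|^{2k}\,dt\ll \frac{2^{2k-1}}{T}\int_T^{2T}\big|R_{j,Y}(\sigma+it)\big|^{2k}\,dt+2^{2k}\ll\Big(\frac{Ck^{1-\sigma}}{(\log k)^{\sigma}}\Big)^{2k}+2^{2k}\ll (Ck)^{2k}.$$

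For the exceptional set $\mathcal B$ we use a crude bound valid for \emph{every} $t\in[T,2T]$. Writing $\log L_j(\sigma+it)=\log L_j(2+it)+\int_2^{\sigma}\frac{L_j'}{L_j}(u+it)\,du$, inserting the standard formula $\frac{L_j'}{L_j}(w)=\sum_{|w-\rho|<1}(w-\rho)^{-1}+O(\log T)$ (Iwaniec--Kowalski~\cite{IK}, Proposition~5.7), using $\log L_j(2+it)=O(1)$ and the Riemann--von Mangoldt bound $\#\{\rho:|\gamma-t|<1\}\ll\log T$, one arrives at
$$\big|\log L_j(\sigma+it)\big|\ll \log T+\sum_{|\gamma-t|<1}\log\frac{e}{|\sigma+it-\rho|}.$$
Raising this to the power $2k$ (Hölder applied to the sum of $\ll\log T$ terms), integrating over $[T,2T]$, bounding $|\sigma+it-\rho|\ge|t-\gamma|$, and using $\int_0^{1}\big(\log(e/u)\big)^{2k}\,du\le e\,(2k)!\ll(Ck)^{2k}$ together with $\#\{\rho:T-1\le\gamma\le 2T+1\}\ll T\log T$, one obtains the crude bound $\frac1T\int_T^{2T}|\log L_j(\sigma+it)|^{2m}\,dt\ll\big(Cm(\log T)^{2}\big)^{2m}$ for every $m\ge 1$. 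Cauchy--Schwarz on $\mathcal B$, followed by this bound with $m=2k$, then gives
$$\frac1T\int_{\mathcal B}\big|\log L_j(\sigma+it)\big|^{2k}\,dt\le\Big(\tfrac{\mathrm{meas}(\mathcal B)}{T}\Big)^{1/2}\Big(\frac1T\int_T^{2T}\big|\log L_j(\sigma+it)\big|^{4k}\,dt\Big)^{1/2}\ll T^{-d(\sigma)/2}\,\big(2Ck(\log T)^{2}\big)^{2k}.$$
When $k\le\log T/(C_2\log\log T)$ one has $2k\log\!\big(2Ck(\log T)^2\big)\le (8/C_2)\log T$ for $T$ large, so choosing $C_2\ge\max\!\big(3B_2,\,32/d(\sigma)\big)$ makes the last display $\ll T^{-d(\sigma)/4}\ll(Ck)^{2k}$. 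Adding the estimates on $G$ and $\mathcal B$ and reinstating $\log c_j$ completes the proof.

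The step I expect to be the crux is the treatment of the exceptional set $\mathcal B$: one needs an a~priori bound for $|\log L_j(\sigma+it)|$ that is only polynomial in $\log T$, so that after Cauchy--Schwarz against the power saving $T^{-d(\sigma)/2}$ furnished by $\mathrm{meas}(\mathcal B)\ll T^{1-d(\sigma)}$ the loss is negligible even when $k$ is as large as $\asymp\log T/\log\log T$. This is precisely what forces the admissible range of $k$ — i.e.\ the constant $C_2$ — to depend on $\sigma$, both through $d(\sigma)$ and through the exponent $B_2$ required in Lemma~\ref{lem : Dir Poly approx}. The remaining ingredients (the Dirichlet-polynomial moment bound on $G$, the convexity inequalities, and the integration of the logarithmic singularities) are routine.
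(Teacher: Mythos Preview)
Your proof is correct and follows the same overall architecture as the paper's: reduce to a single $L_j$, split $[T,2T]$ into the good set where the Dirichlet-polynomial approximation of Lemma~\ref{lem : Dir Poly approx} holds and a small exceptional set, handle the good set by Dirichlet-polynomial moments, and handle the exceptional set by Cauchy--Schwarz against the power saving $T^{-d(\sigma)}$.

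The one substantive difference is the crude input used on the exceptional set. The paper splits $\log L_j=\log|L_j|+i\arg L_j$, bounds $\arg L_j(\sigma+it)=O(\log T)$ via Lemma~9.4 of \cite{T}, and for $\log|L_j|$ invokes Lemma~\ref{lem: 2kth mmt real} (applied to the single function $L_j$) to get $\tfrac1T\int_T^{2T}|\log|L_j||^{4k}\,dt\ll (Ck)^{8k}$. You instead derive the pointwise bound $|\log L_j(\sigma+it)|\ll\log T+\sum_{|\gamma-t|<1}\log(e/|\sigma+it-\rho|)$ and integrate the logarithmic singularities directly to obtain a moment bound $\ll(Ck(\log T)^2)^{2k}$ on all of $[T,2T]$. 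Your route is more self-contained, since it avoids the appeal to Lemma~\ref{lem: 2kth mmt real} (whose proof the paper omits as a modification of \cite{LLR}); the paper's route is shorter on the page because that lemma is already available. On the good set the paper carries out the moment computation from scratch via the mean-value theorem for Dirichlet polynomials, while you simply cite Lemma~\ref{lem:dir poly mmt 2}; either suffices.
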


 \begin{lemma} \label{2kth mmt random}
Let $ 1/2 < \sigma \leq 2 $ be fixed. For every integer $k > 0$ we have
$$  \E \bigg(     \bigg|\,\log  \bigg| \sum_{j \leq J} c_j L_j ( \sigma, X) \bigg|\,\bigg|^{2k} \bigg)   \ll (Ck)^{2k}
$$
and 
$$  \E \bigg(     \bigg|\,\log    c_j L_j ( \sigma, X)  \bigg|^{2k} \bigg)   \ll (Ck)^{k},
$$
where,  in either case, $C>0$ is a  constant depending  at most on $J$.
\end{lemma}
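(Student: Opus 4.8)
The plan is to treat the two estimates separately, the first being genuinely harder because $\log|\sum_j c_jL_j(\sigma,X)|$ can be negative and very large when the sum is near zero, while the second only involves a single factor $\log c_jL_j(\sigma,X)$ whose real and imaginary parts are controlled by the ordinary moments of $\log L_j$. For the second bound I would write $\log c_jL_j(\sigma,X)=\log c_j+\sum_{\frak p}\sum_{\ell\ge 1}\frac{\chi_j(\frak p^\ell)X(p)^\ell}{\ell\,\frak N(\frak p)^{\ell\sigma}}$, split off the $\ell=1$ prime sum from the (absolutely and boundedly convergent, for $\sigma>1/2$) tail, and apply the second estimate of Lemma~\ref{lem:dir poly mmt 1} together with the elementary fact that $\sum_p p^{-2\sigma}<\infty$. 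Since $k!\ll (Ck)^k$, that immediately yields $\E(|\log c_jL_j(\sigma,X)|^{2k})\ll (Ck)^k$, matching the claim; the prime-power tail contributes only a bounded quantity raised to the $2k$, hence absorbed.

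For the first bound I would compare $\sum_j c_jL_j(\sigma,X)$ against its truncated Euler product. Write $\mathcal E_Y(\sigma,X)=\sum_{j\le J}c_j\exp\bigl(R_{j,Y}(\sigma,X)\bigr)$, the sum of the truncated random Euler products, where $R_{j,Y}(\sigma,X)$ is the Dirichlet polynomial of Lemma~\ref{lem : Dir Poly approx}/Lemma~\ref{lem:dir poly mmt 2}. The decomposition
\begin{equation}\notag
\log\Bigl|\sum_j c_jL_j(\sigma,X)\Bigr|
=\log\bigl|\mathcal E_Y(\sigma,X)\bigr|
+\log\biggl|\frac{\sum_j c_jL_j(\sigma,X)}{\mathcal E_Y(\sigma,X)}\biggr|
\end{equation}
reduces the problem to (i) moments of $\log|\mathcal E_Y|$ and (ii) moments of the logarithm of the ratio. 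For (ii) one uses that each $L_j(\sigma,X)/\exp(R_{j,Y}(\sigma,X))$ is, almost surely, $1+o(1)$ with all moments of its logarithm small (this is the random analogue of Lemma~\ref{lem : Dir Poly approx}, and follows from Lemma~\ref{lem:dir poly mmt 2} applied to the tail $\log L_j - R_{j,Y}$), and the ratio of sums is then $1+$ (small), whose log has all moments $\ll (Ck)^{2k}$ trivially after a Cauchy--Schwarz split on the rare event that denominators are small. For (i), I would bound $\log|\mathcal E_Y(\sigma,X)|$ above by $\max_j(\log|c_j|+\mathrm{Re}\,R_{j,Y})\le C+\sum_j|R_{j,Y}(\sigma,X)|$, so the $2k$-th moment is $\ll \sum_j\E(|R_{j,Y}|^{2k})\ll (Ck^{1-\sigma}/(\log k)^\sigma)^{2k}\ll(Ck)^{2k}$ by Lemma~\ref{lem:dir poly mmt 2}. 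The lower bound — i.e.\ controlling $\log|\mathcal E_Y|$ when $\mathcal E_Y$ is near $0$ — is the crux and is handled exactly as in Lemma~\ref{lem: log 2k dv}: conditioning on all $X(p)$ except one chosen prime $p_0$ on which the $\chi_j(\frak p_0)$ are pairwise distinct, $\mathcal E_Y$ becomes $|a-be^{iv}|$-type in the single variable $X(p_0)=e^{iv}$ (for the $J=2$ case literally, and for $J\ge 3$ after iterating / averaging over several such free primes), so the conditional $2k$-th moment of $\log|\mathcal E_Y|$ is $\ll (C|\log a|)^{2k}+(Ck)^{2k}$; averaging over the remaining coordinates and using the already-established upper-moment bound to handle the $|\log a|$ term gives $\ll (Ck)^{2k}$ overall.

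The main obstacle, and the place where $J\ge 3$ forces extra care, is the lower bound on $\log|\mathcal E_Y(\sigma,X)|$: with three or more terms one cannot isolate a single randomising variable that makes the sum trace out a circle of fixed radius, so one must choose several primes $p_0,p_1,\dots$ that are \emph{split} with the $\chi_j(\frak p_i)$ in "general position", condition on everything else, and run the Lemma~\ref{lem: log 2k dv} argument in the resulting low-dimensional torus — essentially showing that the density of $(X(p_0),X(p_1),\dots)$ near the zero set of $\sum_j c_j z_j$ (with $z_j$ built from these coordinates) is bounded, so that $\E(|\log|\mathcal E_Y||^{2k}\mid \text{rest})\ll(Ck)^{2k}$ uniformly. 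Everything else is routine multinomial expansion plus the moment inputs of Section~2. The final answer $(Ck)^{2k}$ for the real part (versus $(Ck)^k$ for the single-factor logarithm) reflects precisely this loss near zeros of the linear combination.
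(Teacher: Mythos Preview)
Your treatment of the second inequality is correct and in fact more elementary than the paper's: splitting off the bounded prime-power tail and applying the second estimate of Lemma~\ref{lem:dir poly mmt 1} (which for the expectation needs no restriction on $k$) together with $\sum_p p^{-2\sigma}<\infty$ and $k!\ll (Ck)^k$ gives the claim. The paper instead passes through the Gaussian-tailed density and computes $\int_{\mathbb R^2}(u^2+v^2)^k e^{-A(u^2+v^2)}\,du\,dv$.

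For the first inequality there is a genuine gap. The decomposition $\log\bigl|\sum_j c_jL_j\bigr|=\log|\mathcal E_Y|+\log|\text{ratio}|$ does not decouple the difficulty. On the event $\{|\mathcal E_Y|<\delta\}$ --- precisely the event you wish to show is rare --- the weights $c_j e^{R_{j,Y}}/\mathcal E_Y$ are large, so the ratio is \emph{not} $1+o(1)$ even when each $L_j/e^{R_{j,Y}}$ is. Your Cauchy--Schwarz split on that event leaves you with $\E\bigl(\mathbf 1_{\{|\mathcal E_Y|<\delta\}}\,|\log|\text{ratio}||^{2k}\bigr)$, and bounding $|\log|\text{ratio}||\le |\log|\sum_j c_jL_j||+|\log|\mathcal E_Y||$ feeds back a moment of the very quantity you are trying to estimate; any further Cauchy--Schwarz only raises the order of that unknown moment. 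Separately, the conditioning step in (i) is not ``exactly as in Lemma~\ref{lem: log 2k dv}'': freeing one prime $p_0$ turns $\mathcal E_Y$ into $\sum_j A_j\,g_j(X(p_0))$ with \emph{distinct} local factors $g_j$, not into $a-be^{iv}$, and for $J\ge 3$ your multi-prime torus sketch (``density near the zero set is bounded'') is, in effect, a hands-on reproof of the absolute continuity of the joint law of the $L_j(\sigma,X)$ with bounded density --- which is the real input.

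The paper supplies that input as a black box and thereby avoids both issues. It quotes (an extension of) Borchsenius--Jessen to the effect that the vector $\bigl(\log|L_1|,\dots,\log|L_J|,\arg L_1,\dots,\arg L_J\bigr)(\sigma,X)$ has an absolutely continuous law with density $F(\u,\v)\ll\exp\bigl(-A\sum_j(u_j^2+v_j^2)\bigr)$. The expectation then becomes the concrete integral $\int_{\mathbb R^{2J}}\bigl|\log|\sum_j c_j e^{u_j+iv_j}|\bigr|^{2k}F(\u,\v)\,d\u\,d\v$; after reducing each $v_j$-integral to $[0,2\pi]$ by periodicity, one integrates out the pairs $(u_1,v_1),(u_2,v_2),\dots$ in turn. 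At each step, with $B=\sum_{i>j}c_i e^{u_i+iv_i}$ fixed, Lemma~\ref{lem: log 2k dv} bounds the $v_j$-integral of $|\log|B+c_j e^{u_j+iv_j}||^{2k}$ by $(C|\log|B||)^{2k}+(Ck)^{2k}$ (after splitting on whether $|B|$ or $|c_j|e^{u_j}$ is larger), and the Gaussian weight in $u_j$ absorbs the resulting $(C|u_j|)^{2k}$ term. After $J$ iterations only $(Ck)^{2k}$ remains.
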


We  will sometimes  use   Lemmas~\ref{lem: 2kth mmt real} and \ref{lem: 2kth mmt imag} 
to show that we may restrict certain sets to lie within   intervals of length 
$\  \approx \sL=\log\log T$ at the cost of a small error.  
Here is a typical example.
Let $\B$ be a Borel set in $\mathbb R$ and let 
 $A$  be a  positive constant. Set
$$ I_T=(-A\sL , A\sL ].$$
Then 
\be\notag
\begin{split}
\mathrm{meas}\big\{ t\in[T, 2T] : |\log |c_iL_i(s)|| \notin I_T \big\}
 \leq & \int_{T}^{2T} \bigg(\frac{|\log |c_iL_i(s)||}{A\sL}\bigg)^{2k} dt  
 \ll   \;T\bigg(\frac{ Ck}{ A \sL }\bigg)^{2k}.
\end{split}
\ee
Taking $k=\sL$ and $A$ sufficiently large relative to $C$, we see that this is 
$\ll T (\log T)^{-B}$, where $B>0$ is an arbitrarily large   constant. Thus,
\be\notag
\begin{split}
\mathrm{meas}  \big\{ t\in[T, 2T] : \, | \log |c_iL_i(s)| \,| \in \B  \big\}  
=&\mathrm{meas}\big\{ t\in[T, 2T] : |\log |c_iL_i(s)|\,| \in \B \cap  I_T \big\} \\
&\quad +O\big(T (\log T)^{-B} \big).
\end{split}
\ee
When required, we will restrict sets in this way by simply  writing ``by 
Lemma~\ref{lem: 2kth mmt real}  (or \ref{lem: 2kth mmt imag})''.   

The proof of Lemma~\ref{lem: 2kth mmt real} is a straightforward modification of  the proof of Proposition 2.5 in \cite{LLR} 
(one just replaces  $\zeta(s)-a$ by $E(s, Q)$ throughout), so we do not include it.

 \subsection{Proof of Lemma~\ref{lem: 2kth mmt imag}}



 A little thought shows that it is enough to prove that
$$ \frac 1T \int_T^{2T} |   \log L_j ( \sigma + it ) |^{2k} dt \ll (Ck)^{2k}. $$
Let $\mathscr A (T) = \mathscr A_\sigma ( T) $ be the set of $ t \in [T, 2T]$ such that
$$ \log L ( \sigma + it ) = R_Y  ( \sigma + it ) + O ( ( \log T)^{- B_1 })  $$
and let $ \mathscr A'(T)  = [T, 2T] \setminus \mathscr A(T)$. Then by Lemma \ref{lem : Dir Poly approx} 
\begin{equation}\label{eqn A'(T) error}
 \mathrm{meas}( \mathscr A'(T) ) \ll T^{1- d(\sigma)}.
\end{equation}
Splitting the integral as
\be\label{split int}
 \frac1T \int_T^{2T} | \log L( \sigma  + it)|^{2k} dt 
= \bigg\{\int_{\mathscr A(T)} + \int_{ \mathscr A'(T)}\bigg\} | \log L( \sigma  + it)|^{2k} dt, 
\ee
we first estimate the integral over $ \mathscr A'(T)$. We have
\be\label{int A'} 
\begin{split}
\frac1T  \int_{\mathscr A'(T)}& | \log L( \sigma  + it)|^{2k} dt \\
& \leq  2^{k-1} \bigg( \frac1T \int_{\mathscr A'(T)} | \log | L( \sigma  + it)| |^{2k} dt
+  \frac1T \int_{\mathscr A'(T)} | \arg L( \sigma  + it)|^{2k} dt  \bigg) . 
\end{split}
\ee
By Lemma 9.4 in \cite{T} 
$$ \arg L( \sigma + it) = O( \log T),$$
so   
$$  \frac1T \int_{\mathscr A'(T)} | \arg L( \sigma  + it)|^{2k} dt     \ll  T^{- d(\sigma)} (C \log T)^{2k} . $$
By the Cauchy-Schwartz inequality,
Lemma \ref{lem: 2kth mmt real}, and \eqref{eqn A'(T) error}, the second integral in   
\eqref{int A'} is
\be\notag 
\begin{split}
 \frac1T \int_{\mathscr A'(T)} | \log | L( \sigma  + it)||^{2k} dt     \ll & T^{ - d(\sigma)/2}  \bigg(\frac 1T \int_T^{2T}  | \log | L( \sigma  + it)||^{4k} dt \bigg)^{1/2} \\
   \ll  &T^{ - d(\sigma)/2}  (Ck)^{4k} . 
 \end{split}
\ee
Thus,
\be\label{int A' final}
 \frac1T \int_{\mathscr A'(T)} | \log L( \sigma  + it)|^{2k} dt \ll  T^{- d(\sigma)} ( C \log T)^{2k}  +  T^{ - d(\sigma)/2}  (Ck)^{4k}. 
 \ee

Next we estimate the integral over $\mathscr A(T)$. By the definition of $ \mathscr A(T)$
\begin{align*} 
 \frac1T \int_{\mathscr A(T)} | \log L( \sigma  + it)|^{2k} dt   & =  \frac1T \int_{\mathscr A(T)} | R_Y ( \sigma  + it) + O( ( \log T)^{- B_1 }) |^{2k} dt  \\
& \leq  \frac1T \int_T^{2T} \bigg| \sum_{ p \leq Y} \frac{ a(p)}{ p^{\sigma + it}} + O(1) \bigg|^{2k} dt   \\
& \leq 2^{2k-1} \bigg(   \frac1T \int_T^{2T} \bigg| \sum_{ p \leq Y} \frac{ a(p)}{ p^{\sigma + it}}  \bigg|^{2k} dt   + C^k \bigg)  .  
\end{align*}
Defining
$$ a_k (n) = \sum_{\substack{ p_1 \cdots p_k = n \\   p_i \leq Y }} 
a(p_1 ) \cdots a(p_k) , $$
we find that  since $| a(p) | \leq 2 $, 
$$ | a_k ( n) | \leq   \sum_{ p_1 \cdots p_k = n } 2^k \leq 2^k k! \ll (Ck)^k . $$
Thus, for $ Y^k \ll T $ and $1/2<\s\leq 2$   fixed,
\begin{align*}  
   \frac1T \int_T^{2T} \bigg| \sum_{ p \leq Y} \frac{ a(p)}{ p^{\sigma + it}}  \bigg|^{2k} dt & =     \frac1T \int_T^{2T} \bigg| \sum_{ n \leq Y^k} \frac{ a_k (n)}{ n^{\sigma + it}}  \bigg|^2 dt \\
& = \frac1T ( T+O(Y^k )) \sum_ { n \leq Y^k } \frac{ |a_k (n) |^2 }{ n^{2 \sigma}} \\
& \ll  \sum_{n=1}^\infty \frac{   ( Ck)^{2k}}{ n^{2 \sigma}}  
 \ll (Ck)^{2k}.
   \end{align*}
 Since $Y = ( \log T)^{B_2}$, this inequality holds for $k \leq \log T/( B_2 \log \log T) $.

Combining this with  \eqref{split int} and \eqref{int A' final}, we obtain
$$ \frac1T \int_T^{2T}  | \log L( \sigma  + it)|^{2k} dt \ll  (Ck)^{2k}+  T^{- d(\sigma)} ( C \log T)^{2k}  +  T^{ - d(\sigma)/2}  (Ck)^{4k} 
$$
for $k \leq \log T/( B_2 \log \log T) $. In the second and third error terms we have
$$ (C \log T)^{2k} \leq C^{2k} T^{ 2  /B_2} \qquad \mathrm{and} \qquad (Ck)^{4k} \leq T^{4/B_2 }. $$
Thus, choosing  $  B_2 $  large enough, we  find that
$$ \frac1T \int_T^{2T}  | \log L( \sigma  + it)|^{2k} dt \ll  (Ck)^{2k} $$
for $k \leq \log T/( B_2 \log \log T) $, as required.
 

 
\subsection{Proof of Lemma~\ref{2kth mmt random}}

We define a measure on Borel sets $\B \in \mathbb{R}^{2J}$ by
$$\Phi (\B) := \P ( \L_0  ( \sigma, X) \in \B ) ,$$
where
$$ \L_0 ( \sigma, X) :=  \big( \log | L_1 ( \sigma, X)| , \dots, \log |L_J ( \sigma, X) | , 
\arg   L_1 ( \sigma, X) , \dots, \arg   L_J ( \sigma, X)  \big) . $$
By a straightforward generalization of the proofs of Theorems 5 and 6 in Borchsenius and Jessen~\cite{BJ}, one can 
show that $\Phi $ is absolutely continuous and that  its density function $ F( \u, \v) $ satisfies 
$$ 
F( \u, \v)\ll \exp \big(-A( u_1^2 + \cdots + u_J ^2 +  v_1 ^2 + \cdots +  v_J ^2 )  \big) 
$$
for some constant $A>0$, where $ \u = ( u_1 , \dots, u_J ) $ and $ \v = ( v_1 , \dots, v_J ) $.
Hence
\begin{align*}
\E \bigg(  \bigg| \log \bigg| \sum_{j=1}^J c_j L_j ( \sigma, X) \bigg| \ \bigg|^{2k} \bigg) 
&=  \int_{-\infty}^\infty  \cdots \int_{-\infty}^\infty  
\bigg| \log \bigg| \sum_{j=1}^J c_j e^{u_j + i v_j } \bigg| \  \bigg|^{2k} F( \u, \v) d\u d\v \\
& \ll  \int_{-\infty}^\infty  \cdots \int_{-\infty}^\infty  
\bigg| \log \bigg| \sum_{j=1}^J c_j e^{u_j + i v_j } \bigg|\  \bigg|^{2k} e^{ - A( u_1^2 + \cdots + u_J ^2 +  v_1 ^2 + \cdots +  v_J ^2 )  }  d\u d\v .
\end{align*}
Since $e^{iv_j}$ is periodic,  the integral with respect to   $v_j$  is of the form
\begin{align*} 
\sum_{m= -\infty}^\infty \int_0^{2\pi} \bigg| \log \bigg|  \sum_{j=1}^J c_j e^{u_j + i v_j } \bigg|\  \bigg|^{2k}&e^{-A ( v_j +2 \pi m )^2 }dv_j  \\
=&  \int_0^{2\pi}  \bigg| \log \bigg| \sum_{j=1}^J c_j e^{u_j + i v_j } \bigg|\  \bigg|^{2k}\sum_{m= -\infty}^\infty   e^{-A ( v_j +2 \pi m )^2 }dv_j  \\
\ll &\int_0^{2\pi} \bigg| \log \bigg| \sum_{j=1}^J c_j e^{u_j + i v_j } \bigg|\  \bigg|^{2k}\ dv_j   .
\end{align*}
Thus,
\begin{align}\label{exp lin comb 1}
\E \bigg(  \bigg|  \log \bigg|  \sum_{j=1}^J &  c_j L_j ( \sigma, X) \bigg| \  \bigg|^{2k} \bigg)   \\
& \ll  \int_{-\infty}^\infty  \cdots \int_{-\infty}^\infty  \bigg( \int_0^{2\pi} \cdots \int_0^{2 \pi}   \bigg|
 \log \bigg| \sum_{j=1}^J c_j e^{u_j + i v_j } \bigg| \ \bigg|^{2k} d\v \bigg)  
 e^{ - A( u_1^2 + \cdots + u_J ^2 )}      d\u   .   \notag
\end{align}


We first integrate with respect to $u_1$ and $v_1$ and see that   
\begin{align*} \int_{-\infty}^\infty  \int_0^{2 \pi}  
 \bigg| \log \bigg| \sum_{j=1}^J c_j e^{u_j + i v_j } \bigg|\bigg|^{2k}     
 e^{ - A u_1 ^2 } dv_1 du_1 
 =& \int_{-\infty}^\infty  \int_0^{2 \pi}  
 \big| \log  | B+  c_1 e^{u_1 + i v_1 }  |\big|^{2k}     e^{ - A u_1 ^2 } dv_1 du_1 ,
 \end{align*}
where $ B =  \sum_{j=2}^J c_j e^{u_j + i v_j }$. Dividing the $u_1$-integral into two pieces, we see that this equals
\be\bigg(\int_{ |B| \ge |c_1 | e^{u_1}  } +\int_{ |B| < |c_1 | e^{u_1}}  \bigg)
\bigg(  \int_0^{2 \pi}   \big| \log   | B+  c_1 e^{u_1 + i v_1 } |\big|^{2k}  dv_1 \bigg)  e^{ - A u_1 ^2 }  du_1. 
\ee
By Lemma~\ref{lem: log 2k dv} the first integral is 
$$
\ll   \int_{ |B| \geq | c_1 | e^{u_1  } } \big(  ( C\log |B|)^{2k}  + (Ck)^{2k} \big)  e^{ - A u_1^2 }  d u_1 
\ll ( C\log |B|)^{2k}  + (Ck)^{2k} ) .
$$
Also by Lemma~\ref{lem: log 2k dv}, the second integral is
\begin{align*}
\int_{ |B| < |c_1 | e^{u_1}} \bigg(\int_0^{2 \pi}    
&\big| \log   |  c_1 e^{u_1}  + B e^{-i v_1 } |\big|^{2k}  dv_1   \bigg) 
e^{ - A u_1 ^2 }  du_1  \\
\ll &\int_{ |B| < |c_1 | e^{u_1}}\big( (C|\log c_1 e^{u_1}|)^{2k} +(Ck)^{2k}  \big)
e^{ - A u_1 ^2 }  du_1 \\
\ll &\int_{-\infty}^\infty   (C+ | u_1  | )^{2k}   e^{ - A u_1 ^2 }  du_1 +  (Ck)^{2k}\\
\ll & C^{2k}  \int_{-\infty}^\infty   u_1^{2k}   e^{ - A u_1 ^2 }  du_1 +  (Ck)^{2k} 
\ll   C^{2k} \Gamma(k+\tfrac12) +  (Ck)^{2k} \\
\ll& (Ck)^{2k}.
\end{align*}
Hence
\begin{align*} \int_{-\infty}^\infty  \int_0^{2 \pi}  
 \bigg| \log \bigg| \sum_{j=1}^J c_j e^{u_j + i v_j } \bigg|\bigg|^{2k}     
 e^{ - A u_1 ^2 } dv_1 du_1 
 \ll&  \bigg(C  \bigg |\log  \bigg| \sum_{j=2}^J c_j e^{u_j + i v_j } \bigg| \bigg)^{2k}    +(Ck)^{2k}
 \end{align*}
 
Proceeding inductively, we find that
\begin{align} 
  \int_{-\infty}^\infty  \cdots \int_{-\infty}^\infty  \bigg( \int_0^{2\pi} \cdots \int_0^{2 \pi}   \bigg|
 \log \bigg| \sum_{j=1}^J c_j e^{u_j + i v_j } \bigg| \ \bigg|^{2k} d\v \bigg)  
 e^{ - A( u_1^2 + \cdots + u_J ^2 )}      d\u  
 \ll  (Ck)^{2k} .   \notag
\end{align}
Thus, by \eqref{exp lin comb 1}
$$
\E \bigg(  \bigg|  \log \bigg|  \sum_{j=1}^J    c_j L_j ( \sigma, X) \bigg| \  \bigg|^{2k} \bigg)  
\ll (Ck)^{2k}.
$$

We now prove the second inequality of the lemma, namely,
$$  \E \bigg(     \bigg|\,\log    c_j L_j ( \sigma, X)  \bigg|^{2k} \bigg)   \ll (Ck)^{k}.
$$
as in  the proof for the first inequality, we have
\begin{align*}
  \E \bigg(     \bigg|\,\log    L_j ( \sigma, X)  \bigg|^{2k} \bigg)  &  \ll  \int_{-\infty}^\infty \int_{-\infty}^\infty | u+iv|^{2k} e^{-A (u^2 + v^2 )} du dv \\
  &  =  \int_{-\infty}^\infty \int_{-\infty}^\infty ( u^2 + v^2 )^k e^{-A (u^2 + v^2 )} du dv \\
  & =  \int_0^{2 \pi} \int_0^\infty r^{2k} e^{-Ar^2} r dr d\theta \\
  & \ll (Ck)^k . 
\end{align*}

\section{Proof of Theorem~\ref{lem: discrepancy}  }

\subsection{Two lemmas pertaining to the proof of Theorem~\ref{lem: discrepancy} }

To prove Theorem~\ref{lem: discrepancy} we also require  the following two lemmas.

\begin{lemma}\label{lem: transforms}
Let  $\u = ( u_1 , \dots, u_J)$ and similarly $\v, \w,$ and $\z$ be vectors in $\mathbb R^{J}$.
Let 
$$ (\w, \z)\cdot (\u,\v)  = \sum_{j \leq J} (w_j u_j + z_j v_j ),$$
and define
$$ \widehat{\Psi}_T ( \w, \z) := \int_{ \mathbb{R}^{2J}} e^{ i  (\w, \z)\cdot (\u,\v) } d\Psi_T ( \u, \v)$$
and
$$ \widehat{\Psi} ( \w,\z ) := \int_{ \mathbb{R}^{2J} }  e^{ i  (\w, \z)\cdot (\u,\v) } d\Psi  ( \u, \v).
$$
If $A >1$, there exists a constant   $b=b(\s, A)>0$ such that for all $ \w$ and $ \z $ with $ |w_j|, |z_j| \leq b (\log T)^\sigma $, we have
\be\notag
 \widehat{\Psi}_T ( \w, \z) - \widehat{\Psi} (\w,\z) = O \big( ( \log T)^{-A} \big).
 \ee
\end{lemma}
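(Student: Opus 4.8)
\textbf{Proof proposal for Lemma~\ref{lem: transforms}.}

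The plan is to compare the characteristic function of $\L(s)$ on $[T,2T]$ with that of the random vector $\L(\sigma,X)$ by replacing each $\log L_j$ and $\arg L_j$ by its Dirichlet polynomial approximation $R_{j,Y}(s)$ from Lemma~\ref{lem : Dir Poly approx}, with $Y=(\log T)^{B_2}$ and $B_2$ chosen large in terms of $A$ and $\sigma$. First I would write $\widehat\Psi_T(\w,\z)$ as an average over $t\in[T,2T]$ of $\exp\big(i\,(\w,\z)\cdot\L(s)\big)$, and note that every component of $\L(s)$ is a fixed real-linear combination of the $\log L_j(s)$, $\arg L_j(s)$ and the constants $\log|c_j|$, $\arg c_j$; the constant parts just contribute a unimodular factor common to both $\widehat\Psi_T$ and $\widehat\Psi$, so it suffices to work with the vector $\L_0$ of $\log L_j$, $\arg L_j$. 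On the good set $\mathscr A(T)$ (outside a set of measure $\ll T^{1-d(\sigma)}$) one has $\log L_j(s)=R_{j,Y}(s)+O((\log T)^{-B_1})$ for each $j$; choosing $B_1$ large, the linear form $(\w,\z)\cdot\L_0(s)$ with $|w_j|,|z_j|\le b(\log T)^\sigma$ differs from the corresponding linear form in the $R_{j,Y}(s)$ by $O\big(J\cdot b(\log T)^\sigma\cdot(\log T)^{-B_1}\big)=o(1)$, hence $e^{i(\w,\z)\cdot\L_0(s)}=e^{i(\w,\z)\cdot\R_Y(s)}+O(b(\log T)^{\sigma-B_1})$ pointwise on $\mathscr A(T)$, where $\R_Y$ denotes the vector of $R_{j,Y}$ and their conjugates. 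The contribution of $\mathscr A'(T)$ is controlled trivially using $|e^{i\theta}|=1$ and $\mathrm{meas}(\mathscr A'(T))\ll T^{1-d(\sigma)}$. The same reduction applies on the random side via the almost-sure analogue of Lemma~\ref{lem : Dir Poly approx} for $\log L_j(\sigma,X)$, so that $\widehat\Psi(\w,\z)$ is, up to $O((\log T)^{-A})$, the expectation of $e^{i(\w,\z)\cdot\R_Y(\sigma,X)}$.

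It remains to compare $\frac1T\int_T^{2T}e^{i(\w,\z)\cdot\R_Y(\sigma+it)}\,dt$ with $\E\,e^{i(\w,\z)\cdot\R_Y(\sigma,X)}$. I would expand the exponential in a power series and interchange sum and integral (resp.\ expectation); each term is a fixed linear combination of products $\prod_j Q_{j,Y}(\sigma+it)^{k_j}\overline{Q_{j,Y}(\sigma+it)}^{k_j'}$ (with $Q_{j,Y}=R_{j,Y}$ here, a Dirichlet polynomial of length $Y$ with bounded coefficients), so Lemma~\ref{lem:dir poly mmt 3} applies and shows that the degree-$(k+k')$ term of the $t$-average equals the corresponding term of the random expectation up to $O\big((CY^{2-\sigma})^{k+k'}/T\big)$. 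Summing the main terms reconstructs $\E\,e^{i(\w,\z)\cdot\R_Y(\sigma,X)}$ exactly. To control the tail and the accumulated error I truncate the exponential series at order $K=\lfloor c\log T/(\log\log T)\rfloor$: the tail beyond $K$ is bounded on both sides using the moment bounds of Lemma~\ref{lem:dir poly mmt 2}, namely $\frac1T\int_T^{2T}|R_{j,Y}(\sigma+it)|^{2k}\,dt\ll (Ck^{1-\sigma}/(\log k)^\sigma)^{2k}$ and its expectation analogue, together with $|w_j|,|z_j|\le b(\log T)^\sigma$; since $R_{j,Y}$ has $2k$-th moment essentially $k^{2k(1-\sigma)}$, the term of order $k$ is $\lesssim (b(\log T)^\sigma)^k k^{k(1-\sigma)}/k!\lesssim (Cb(\log T)^\sigma k^{-\sigma})^{k}$, which for $k>K$ and $b$ small enough (depending on $\sigma,A$) is summable and $\ll (\log T)^{-A}$. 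The accumulated Lemma~\ref{lem:dir poly mmt 3} error over $k+k'\le 2K$ terms is $\ll 4^{2K}(CY^{2-\sigma})^{2K}/T\le T^{O(1/\log\log T)}\cdot Y^{O(K)}/T$; since $Y^{K}=(\log T)^{B_2 K}=\exp\big(O(\log T\cdot\log\log\log T/\log\log T)\big)=T^{o(1)}$, this error is $\ll T^{-1/2}$, easily absorbed.

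The main obstacle — and the reason for the quantitative bound $|w_j|,|z_j|\le b(\log T)^\sigma$ with small $b$ — is balancing the two competing constraints on the truncation level $K$: it must be small enough that the off-diagonal error from Lemma~\ref{lem:dir poly mmt 3}, which grows like $(Y^{2-\sigma})^{K}/T$, stays negligible (forcing $K\ll \log T/\log\log T$), yet large enough that the truncated exponential tail, whose size is governed by $(b(\log T)^\sigma k^{-\sigma})^k$ summed over $k>K$, is $\ll(\log T)^{-A}$. With $K\asymp\log T/\log\log T$ the generic tail term at $k=K$ is of size roughly $(b(\log T)^\sigma(\log\log T)^{\sigma}/(\log T)^{\sigma})^{K}=(b(\log\log T)^{\sigma})^{K}$, which is \emph{not} small — so a naive bound fails, and one must instead exploit that for the relevant range the dominant contribution to the $2k$-th moment comes from primes $p\le k\log k$, giving the sharper bound $(Ck^{1-\sigma}/(\log k)^\sigma)^{2k}$ of Lemma~\ref{lem:dir poly mmt 2} rather than the trivial $(Ck)^{2k}$; with this sharper bound the order-$k$ term is $\ll (Cb(\log T)^\sigma/(\log k)^\sigma)^k$, and choosing $b=b(\sigma,A)$ sufficiently small together with $K$ slightly below $\log T/(3\log Y)$ makes both errors $\ll(\log T)^{-A}$ simultaneously. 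Carefully tracking these dependencies, and verifying that the linear-form perturbation $O(b(\log T)^{\sigma-B_1})$ on $\mathscr A(T)$ is indeed $o(1)$ for admissible $B_1$, is the only delicate point; everything else is a routine assembly of the lemmas already established.
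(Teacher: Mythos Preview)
Your proposal is correct and follows essentially the same route as the paper: approximate each $\log L_j$ by the Dirichlet polynomial $R_{j,Y}$ on the good set from Lemma~\ref{lem : Dir Poly approx}, truncate the exponential at order $\asymp \log T/\log\log T$, match the truncated moments on the $t$-side with the random side via Lemma~\ref{lem:dir poly mmt 3}, and control the tail using the sharp moment bound of Lemma~\ref{lem:dir poly mmt 2}. The only technical difference is in how the Taylor truncation is justified: the paper first passes to an auxiliary set $B_2(T)=\{t:|Q_{j,Y}(\sigma+it)|\le (\log T)^{1-\sigma}/\log\log T\}$ (whose complement has small measure by Chebyshev and Lemma~\ref{lem:dir poly mmt 2}), truncates the exponential \emph{pointwise} there with error $e^{-N}$, and then extends each moment integral back to $[T,2T]$; you instead bound the Taylor remainder $|e^{ix}-\sum_{k\le K}(ix)^k/k!|\le |x|^{K+1}/(K+1)!$ directly in $L^1$ using the $(K{+}1)$-st moment bound. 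Both work, and your variant is slightly more streamlined; just be sure to use the single Taylor--remainder inequality at order $K{+}1$ rather than summing the individual terms for all $k>K$, since Lemma~\ref{lem:dir poly mmt 2} is only stated for $k\le \log T/(3\log Y)$.
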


\begin{proof} 
By the definition  of $\Psi_T $ and $\Psi$ (see \eqref{Psi_T} and \eqref{Psi}),  we may write  
\begin{equation}\notag
\begin{split}
\widehat{\Psi}_T ( \w, \z) = \frac1T \int_T^{2T} 
\exp \Bigg[ i \bigg(& \sum_{ j <  J } w_j \log \bigg| \frac{c_j }{c_J} \frac{ L_j }{L_J} ( \sigma +it)  \bigg| + w_J \log |c_J L_J ( \sigma+it) |   \\
&  + \sum_{ j < J } z_j \arg \frac{c_j}{c_J} \frac{L_j}{L_J} ( \sigma + it)   + z_J \arg c_J L_J ( \sigma +it) \bigg) \Bigg] dt
\end{split}\end{equation}
and
\begin{equation}\notag
\begin{split}
\widehat{\Psi} ( \w, \z) = \E \Bigg(   \exp \Bigg[ i \bigg(& \sum_{   j < J } w_j \log \bigg| \frac{c_j }{c_J} \frac{ L_j }{L_J} ( \sigma , X )\bigg| +  w_J \log |c_J L_J ( \sigma ,X ) |   \\
& + \sum_{  j < J } z_j \arg \frac{c_j}{c_J} \frac{L_j}{L_J} ( \sigma  , X ) + z_J \arg c_J L_J ( \sigma , X )  \bigg) \Bigg] \Bigg).
\end{split}\end{equation}
Now we use Lemma~\ref{lem : Dir Poly approx}  to replace the logarithms of the $L$-functions by short Dirichlet polynomials. 
Let
$$ Q_{ j,Y } (\sigma +it)= \log \frac{c_j}{c_J} +  \sum_{ p^n \leq Y } \frac{a_j(p^n) -  a_J(p^n)}{p^{n( \sigma +it)}}  $$
for $   j< J$, and 
$$ Q_{ J,Y } (\sigma +it)= \log c_J + \sum_{ p^n \leq Y } \frac{a_J(p^n)}{p^{n( \sigma +it)}}. $$
By Lemma~\ref{lem : Dir Poly approx}, for any fixed  $B_1>0$ we have 
\begin{equation}\label{D poly eq1}
 \log \frac{c_j}{c_J} \frac{L_j}{L_J } ( \sigma + it) = Q_{j,Y} ( \sigma + it) + O(( \log T)^{-B_1})\qquad (  j< J)
\end{equation}
and
\begin{equation}\label{D poly eq2}
 \log c_J L_J ( \sigma + it) =Q_{J,Y} ( \sigma + it) + O ( ( \log T)^{-B_1})
\end{equation}
for all $t \in [T,2T]$ except for a set of measure $ T^{1-d(\sigma)}$, where   $d(\sigma )>0$ is a constant. 
Here  $Y= ( \log T)^{B_2}$ and  $B_2 >2(B_1+1)/(\sigma-1/2) $. 
Letting $B_1 (T)$ be the set of $t \in [T, 2T]$ such that \eqref{D poly eq1} and \eqref{D poly eq2} hold, we then see that
\begin{equation}\notag 
\begin{split}
\widehat{\Psi}_T ( \w, \z) = & \frac1T \int_{B_1(T)} \exp \bigg[ i\bigg(\sum_{ j <  J } w_j \log \bigg| \frac{c_j }{c_J} \frac{ L_j }{L_J} ( \sigma +it)  \bigg| + w_J \log |c_J L_J ( \sigma+it) |   \\
& \hskip.8in + \sum_{ j < J } z_j \arg \frac{c_j}{c_J} \frac{L_j}{L_J} ( \sigma + it)   + z_J \arg c_J L_J ( \sigma +it) \bigg) \bigg] dt +O \big( T^{-d(\sigma)} \big) \\
 =& \frac1T \int_{B_1(T)} \exp \bigg[ i    \sum_{   j \leq J }\big( w_j \Re\ Q_{j,Y} ( \sigma +it)   + z_j \Im \   Q_{j,Y} ( \sigma + it)  \big) \bigg] dt +  O \big(( \log T)^{-B_1} \big)  \\
 =& \frac1T \int_T^{2T} \exp \bigg[ i     \sum_{   j \leq J }\Re\ \big( (  w_j - i z_j)  Q_{j,Y} ( \sigma +it)  \big) \bigg] dt +  O \big(( \log T)^{-B_1} \big) .
\end{split}
\end{equation}
  
To estimate this we define
 $$ B_2 (T) := \{ t \in [T, 2T] : |Q_{j,Y}(\s+it) | \leq (\log T)^{1-\sigma} / \log\log T ,\; j \leq J \}. $$
 Then by Lemma \ref{lem:dir poly mmt 2} and a Chebyshev inequality-type argument, we find that
 $$ \mathrm{meas} \big( [T,2T] \setminus B_2 (T) \big)   \ll T e^{-B_3 \log T/\log \log T },  $$
 where $ B_3 =  \frac{2}{3 B_2} \log \frac{ (3B_2)^{1-\sigma} }{C}$. Clearly $B_3$ will be positive if 
 we choose $B_2$  sufficiently large. Assuming this is the case, we have
 \begin{align*}
& \frac1T \int_T^{2T} \exp \bigg[ i     \sum_{   j \leq J }\Re\ \big( (  w_j - i z_j)  Q_{j,Y} ( \sigma +it)  \big) \bigg] dt  \\
 & = \frac1T \int_{B_2(T)} \exp \bigg[ i     \sum_{   j \leq J }\Re\ \big( (  w_j - i z_j)  Q_{j,Y} ( \sigma +it)  \big) \bigg] dt + O(e^{-B_3 \log T/\log \log T } ).  
 \end{align*}
Now for $|w_j|, |z_j| \leq b(\log T)^\sigma $ and $ t \in B_2 (T)$, we have
$$  \bigg| \sum_{   j \leq J }\Re\ \big( (  w_j - i z_j)  
Q_{j,Y} ( \sigma +it)  \big) \bigg|  \leq  \sqrt{2}\  b\  J \frac{ \log T}{\log \log T}.
$$
Taking  $N = [ e^2  \sqrt{2} b J  \log T/ \log \log T ] $, we see that
\be\label{exp int over B_2}
 \begin{split}
    \frac1T \int_{B_2(T)}& \exp \bigg[ i     \sum_{   j \leq J }\Re\ \big( (  w_j - i z_j)  
Q_{j,Y} ( \sigma +it)  \big) \bigg] dt  \\
 & =    \frac1T \int_{B_2(T)} \sum_{n \leq N} \frac{i^n}{n!}  \bigg( \sum_{   j \leq J }\Re\ \big( (  w_j - i z_j)  Q_{j,Y} ( \sigma +it)  \big) \bigg)^n  dt + O( e^{-N})\\
 & =    \frac1T \int_{B_2(T)} \sum_{n \leq N} \frac{i^n}{n!}  \bigg( \sum_{   j \leq J }  \big( (  w_j - i z_j)  Q_{j,Y} ( \sigma +it)  +     (  w_j + i z_j) \overline{ Q_{j,Y} ( \sigma +it) }\big) \bigg)^n  dt\\
 &\qquad  + O( e^{-N})   \\
 &=   \sum_{n \leq N} \frac{i^n}{n!} \sum_{ \mathbf{k}\cdot  \mathbf{e}+ \mathbf{k}'\cdot  \mathbf{e}=n} 
  {n \choose { \mathbf{k}, \mathbf{k'}} }
  \prod_{ j \leq J}  (  w_j - i z_j)^{k_j}    (  w_j + i z_j)^{k_j'} \\
 & \hskip1.6in \cdot  \bigg( \frac1T \int_{B_2(T)}  \prod_{j \leq J}Q_{j,Y} ( \sigma +it)^{k_j}  \overline{ Q_{j,Y} ( \sigma +it) }^{k_j'}    dt \bigg)+ O( e^{-N}),
 \end{split}
 \ee
 where $\mathbf{e} =(1, 1, \ldots,1)$,\;
$ \mathbf{k}\cdot  \mathbf{e}=k_1 + \cdots + k_J,  \;  \mathbf{k}'\cdot  \mathbf{e}=   k'_1 + \cdots + k_J' ,$
and
 $$
   {n \choose { \mathbf{k}, \mathbf{k'}} }
   = {   n \choose { k_1 , \dots , k_J , k'_1 , \dots , k_J' } }.
 $$  
We write the last integral as
\be\notag
\Bigg\{ \int_{[T, 2T]} -\int_{[T, 2T] \setminus B_2(T)} \Bigg\} \prod_{j \leq J}Q_{j,Y} ( \sigma +it)^{k_j}  \overline{ Q_{j,Y} ( \sigma +it) }^{k_j'}    dt 
\ee
By Lemma \ref{lem:dir poly mmt 2}, if  $0< b<  ( 6B_2 e^2 \sqrt{2}  J)^{-1}$, then
 \begin{align*}
    \int_{[T, 2T] \setminus B_2(T)}   \prod_{j \leq J} & Q_{j,Y} ( \sigma +it)^{k_j}  \overline{ Q_{j,Y} ( \sigma +it) }^{k_j'}    dt  \\
   & \leq \big(\mathrm{meas}\big([T, 2T] \setminus B_2(T)\big)  \big)^{1/2} 
   \bigg( \int_T^{2T}   \prod_{j \leq J}|Q_{j,Y} ( \sigma +it)|^{2(k_j+k_j')}     dt \bigg)^{1/2} \\
   & \ll T^{1/2} e^{-  B_3 \log T/(2 \log \log T) }   \prod_{j \leq J} \bigg( \int_T^{2T}  |Q_{j,Y} ( \sigma +it)|^{2n}     dt   \bigg)^{ (k_j + k_j')/(2n)} \\ 
    & \ll T  e^{-  B_3 \log T/(2 \log \log T) }   \bigg(   \frac{ Cn^{1-\sigma}  }{ (\log n)^\sigma  }  \bigg)^n.
 \end{align*}
The contribution of this integral to \eqref{exp int over B_2} is therefore 
   \begin{align*}
  & \ll    e^{-  B_3 \log T/(2 \log \log T) }    \sum_{n \leq N} \frac{1}{n!} 
\sum_{ \mathbf{k}\cdot  \mathbf{e}+ \mathbf{k}'\cdot  \mathbf{e}=n} 
  {n \choose { \mathbf{k}, \mathbf{k'}} }
 (\sqrt{2}b(\log T)^{\sigma})^n  \bigg(   \frac{ Cn^{1-\sigma}  }{ (\log n)^\sigma  }  \bigg)^n \\
  & =    e^{-  B_3 \log T/(2 \log \log T) }    \sum_{n \leq N} \frac{1}{n!}(2J)^n  (\sqrt{2}b(\log T)^{\sigma})^n  \bigg(   \frac{ Cn^{1-\sigma}  }{ (\log n)^\sigma  }  \bigg)^n  \\
    & \ll    e^{-  B_3 \log T/(2 \log \log T) }    \sum_{n \leq N} \frac{1}{n!}    \bigg( 2J \sqrt{2}bC (\log T)^{\sigma}   \frac{  N^{1-\sigma}  }{ (\log N)^\sigma  }  \bigg)^n  \\
     & \ll    e^{-  B_3 \log T/(2 \log \log T) }   \exp \bigg( 2J \sqrt{2}bC (\log T)^{\sigma} 
       \frac{  N^{1-\sigma}  }{ (\log N)^\sigma  }  \bigg)  \\
     & \ll \exp \bigg(   \bigg( -  \frac{B_3}{2}  + 2J\sqrt{2}C(e^2 \sqrt{2} J)^{1-\sigma} b^{2-\sigma} \bigg) \frac{ \log T}{\log \log T} \bigg) \\
     & \leq  \exp \bigg(     -  \frac{B_3}{4}    \frac{ \log T}{\log \log T} \bigg) ,
 \end{align*}
 provided that $b$ also satisfies $ 0 < b <   B_3^{1/(2-\sigma)} ( 8\sqrt{2} JC (e^2 \sqrt{2}J)^{1-\sigma} )^{-1/(2-\sigma)}   $. Thus,
  \begin{align*}
  \frac1T \int_T^{2T}   \exp  \bigg[ i   &   \sum_{   j \leq J }\Re\ \big( (  w_j - i z_j)   Q_{j,Y} ( \sigma +it)  \big) \bigg] dt   \\
&  =    \sum_{n \leq N} \frac{i^n}{n!} \sum_{ \mathbf{k}\cdot  \mathbf{e}+ \mathbf{k}'\cdot  \mathbf{e}=n} 
  {n \choose { \mathbf{k}, \mathbf{k'}} }
 \prod_{ j \leq J}  (  w_j - i z_j)^{k_j}    (  w_j + i z_j)^{k_j'} \\
 & \hskip.7in  \cdot \bigg(\frac1T \int_T^{2T}  
 \prod_{j \leq J}Q_{j,Y} ( \sigma +it)^{k_j}  \overline{Q_{j,Y} ( \sigma +it) }^{k_j'}    dt \bigg)\\
 & \hskip.5in+   O( e^{-N}) +O( e^{  -(B_3/4) \log T / \log \log T }). 
 \end{align*}
 Note that if we take $b>0$ sufficiently small, the first of the two $O$-terms will be the largest. Assuming this to be the case, we see by 
 Lemmas~\ref{lem:dir poly mmt 2}  and   \ref{lem:dir poly mmt 3}
 that the above is
 \begin{align*}
 =&  \sum_{n \leq N} \frac{i^n}{n!}
 \sum_{ \mathbf{k}\cdot  \mathbf{e}+ \mathbf{k}'\cdot  \mathbf{e}=n} 
   {n \choose { \mathbf{k}, \mathbf{k'}} }
    \prod_{ j \leq J}  (  w_j - i z_j)^{k_j}    (  w_j + i z_j)^{k_j'}  \\
 & \hskip.7in  \cdot   \E \bigg(     \prod_{j \leq J}Q_{j,Y} ( \sigma ,X)^{k_j}  \overline{ Q_{j,Y} ( \sigma ,X) }^{k_j'}  \bigg)+ O( e^{-N}) \\
  = &  \sum_{n \leq N} \frac{i^n}{n!}   \E \bigg( \bigg(      \sum_{   j \leq J }\Re\ \big( (  w_j - i z_j)  Q_{j,Y} ( \sigma ,X)  \big) \bigg)^n \bigg)+ O( e^{-N}) \\
  = & \E \bigg(  \exp \bigg[    i   \sum_{   j \leq J }\Re\ \big( (  w_j - i z_j)  Q_{j,Y} ( \sigma ,X)  \big) \bigg]  \bigg)+ O( e^{-N}).
 \end{align*}
We have now shown that with appropriate choices of the parameters $B_2, B_3,$ 
and $b$
\begin{equation}\label{eqn psi hat 2}
\begin{split}
\widehat{\Psi}_T ( \w, \z) =
  & \E \bigg(  \exp \bigg[    i   \sum_{   j \leq J }\Re\ \big( (  w_j - i z_j)  Q_{j,Y} ( \sigma ,X)  \big) \bigg]  \bigg)+ O( e^{-N}) ,
\end{split}
\ee
where $N = [ e^2  \sqrt{2} b J  \log T/ \log \log T ] $.

 
Now, by   direct calculation,
$$ \E \big( \big| \log c_J L_J (\sigma, X) - Q_{J, Y} (\sigma, X) \big|^2 \big) 
=  \sum_{ p^n \geq Y}  \frac{  |a_J (p^n )|^2 }{ p^{2n\sigma}}    \ll Y^{(1-2\sigma) +\epsilon} \ll (\log T)^{ (1-2\sigma) B_2  + \epsilon} . $$
From this and Chebyshev's inequality we  easily see that
$$ \log c_J L_J(\sigma, X) - Q_{J, Y} (\sigma, X) 
= O\big( ( \log T)^{-B_1}\big), $$
except for a set of $X  \in \mathbb{T}^\infty $  of measure
 $ O((\log T)^{ (1-2\sigma) B_2  + 2 B_1 + \epsilon} ).$
Similarly, 
\begin{equation}\notag
 \log \frac{c_j}{c_J} \frac{L_j}{L_J } ( \sigma , X ) = Q_{j,Y} ( \sigma , X ) 
 + O\big(( \log T)^{-B_1}\big)\qquad \quad (  j< J)
\end{equation}
holds except for a set of $X  \in \mathbb{T}^\infty $  of measure
 $ O( (\log T)^{ (1-2\sigma) B_2  + 2 B_1 + \epsilon})$.
Thus, 
\begin{equation}\begin{split}\notag
   \E \bigg(   \exp \bigg[ i    & \sum_{    j \leq J }\Re \big( (  w_j - i z_j)  Q_{j,Y} ( \sigma , X )  \big) \bigg] \bigg)  \\
= & \E \bigg(   \exp \bigg[ i     \sum_{  j < J }\Re \big( (  w_j - i z_j) \log     \frac{c_j}{c_J}  \frac{L_j}{L_J}  ( \sigma , X )   \big)   + i \Re \big( (  w_J - i z_J)  \log c_J L_J ( \sigma , X )   \big)      \bigg] \bigg) \\
&\quad  +  O\big( (\log T)^{ (1-2\sigma)B_2 + 2 B_1 + \epsilon } \big) + O\big(( \log T)^{-B_1}\big)\\
=&  \widehat{\Psi} (\w, \z) +  O( (\log T)^{ (1-2\sigma)B_2 + 2 B_1 + \epsilon } ) 
+ O\big(( \log T)^{-B_1}\big) .
\end{split}\end{equation}
Choosing first $B_1 $ and then $B_2 $ sufficiently large  as a function of $B_1$, we can ensure that for any 
given $A>1$, the last line equals
$$  \widehat{\Psi} ( \w, \z) + O\big( ( \log T)^{-A}\big) .$$
Combining this with \eqref{eqn psi hat 2}, we see that 
$$  \widehat{\Psi}_T ( \w, \z)= \widehat{\Psi} ( \w, \z) + O\big( ( \log T)^{-A}\big) .$$
This completes the proof of the Lemma~\ref{lem: transforms}.
\end{proof}

 \begin{lemma}\label{lem:upper bound hat psi y}
 There is a positive constant $C_\s$ such that 
 $$ |  \widehat{\Psi}  (y, 0 , \dots, 0 ) |      \leq     \exp\bigg( -\frac{ C_\sigma }{4} \frac{ y^{1/\sigma}}{ \log y }  \bigg)$$
 as $ y \to \infty$.
 \end{lemma}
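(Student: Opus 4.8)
The plan is to identify $\widehat{\Psi}(y,0,\dots,0)$ as a characteristic function and estimate it via a factorization over primes. By the definitions of $\widehat{\Psi}$ and of $\L(\sigma,X)$, we have $\widehat{\Psi}(y,0,\dots,0)=\E\big(\exp(iy\,V(X))\big)$, where $V(X)=\log\big|\tfrac{c_1L_1}{c_JL_J}(\sigma,X)\big|=\log|c_1/c_J|+\Re\big(\log L_1(\sigma,X)-\log L_J(\sigma,X)\big)$. Expanding the two logarithms into their Dirichlet series and collecting the prime ideals lying above each rational prime $p$, I would write $V(X)=\log|c_1/c_J|+\sum_p Y_p$, where each $Y_p$ depends only on $X(p)$ and the $X(p)$ are independent; moreover $Y_p=0$ whenever $p$ is inert, since then $\chi_j(\frak{p})=1$ for every $j$, so only split primes (and the finitely many ramified ones) contribute. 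As $\sum_p Y_p$ converges almost surely and each $|\E(e^{iyY_p})|\le1$, bounded convergence and independence give $|\widehat{\Psi}(y,0,\dots,0)|=\prod_p|\E(e^{iyY_p})|$ (the constant $\log|c_1/c_J|$ contributing only a unimodular factor), and since the partial products over $p\le P$ are non-increasing in $P$ this is $\le\prod_{p\in G}|\E(e^{iyY_p})|$ for any finite set $G$ of primes.

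The second step is to compute $Y_p$ for a split prime. Writing $(p)=\frak{p}_1\frak{p}_2$ and $\alpha_j(p)=\arg\chi_j(\frak{p}_1)$ (so $\chi_j(\frak{p}_2)=\overline{\chi_j(\frak{p}_1)}$), an elementary expansion of the two Euler factors at $\frak{p}_1$ and $\frak{p}_2$ gives, uniformly in $p$,
$$
Y_p=\frac{r_p}{p^{\sigma}}\cos\theta_p+O_\sigma\big(p^{-2\sigma}\big),\qquad r_p:=2\Re\chi_1(\frak{p}_1)-2\Re\chi_J(\frak{p}_1),
$$
with $\theta_p=\arg X(p)$ uniform on $[0,2\pi)$. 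The identity $\tfrac{1}{2\pi}\int_0^{2\pi}e^{it\cos\theta}\,d\theta=J_0(t)$ then yields $\E(e^{iyY_p})=J_0\big(yr_p/p^{\sigma}\big)+O_\sigma\big(y\,p^{-2\sigma}\big)$. I would use the two facts that $|J_0(t)|<1$ for every $t\neq0$ (immediate from the integral representation, $\cos\theta$ not being constant) and hence, by compactness, $\sup_{\delta\le|t|\le\Delta}|J_0(t)|<1$ for any fixed $0<\delta<\Delta$.

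For the third step, set $N=y^{1/\sigma}$ and let $G$ be the set of primes $p\in[N,2N]$ that split in $\mathbb{Q}(\sqrt{D})$ and satisfy $r_p\neq0$. For $p\in G$ one has $p^{-2\sigma}\le y^{-2}$, so the error term above is $O_\sigma(1/y)$, while $y/p^{\sigma}\in[2^{-\sigma},1]$; since $r_p$ is determined by the ideal class of $\frak{p}_1$ and so takes only finitely many values, $|yr_p/p^{\sigma}|$ lies in a fixed interval $[\delta,\Delta]\subset(0,\infty)$ depending only on $\sigma$ and $Q$. Hence $|\E(e^{iyY_p})|\le1-c_1$ for all $p\in G$ and all large $y$, with $c_1=c_1(\sigma,Q)>0$. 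To count $G$, observe that $r_p=0$ precisely when $\chi_1(\frak{p}_1)=\chi_J(\frak{p}_1)$ or $\chi_1(\frak{p}_1)=\overline{\chi_J(\frak{p}_1)}$, i.e.\ when the class of $\frak{p}_1$ lies in $\ker(\chi_1\overline{\chi_J})\cup\ker(\chi_1\chi_J)$; since the characters are listed so that $\chi_1\neq\chi_J$ and $\chi_1\neq\overline{\chi_J}$ (and $J\ge2$, as $h(D)>1$), these are two proper subgroups of the class group, so their union omits a fixed positive proportion of it. By the classical equidistribution of split rational primes among the ideal classes (a consequence of the behaviour of the Hecke $L$-functions $L_j$ at $s=1$), the class of $\frak{p}_1$ equidistributes as $p$ runs over split primes, so $|G|\gg_Q N/\log N\gg_Q y^{1/\sigma}/\log y$. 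Combining everything,
$$
|\widehat{\Psi}(y,0,\dots,0)|\le(1-c_1)^{|G|}\le\exp\Big(-\frac{C_\sigma}{4}\,\frac{y^{1/\sigma}}{\log y}\Big)
$$
for a suitable $C_\sigma>0$ and all large $y$.

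The main obstacle is the counting in the last step: one must know that $r_p$ stays bounded away from $0$ for a positive proportion of the split primes $p$. This is precisely the point at which the genuine distinctness of the $L$-functions is needed — both $\chi_1\neq\chi_J$ and $\chi_1\neq\overline{\chi_J}$, since $\chi_1=\overline{\chi_J}$ would force $r_p\equiv0$ — and it rests on the fact that a finite abelian group is never covered by two proper subgroups, together with the equidistribution of primes among ideal classes. The remaining ingredients (the Euler-factor expansion of $Y_p$, the Bessel-integral identity, and the strict bound $|J_0(t)|<1$ for $t\neq0$) are routine.
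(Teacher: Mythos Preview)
Your argument is correct and follows essentially the same route as the paper: factor $\widehat{\Psi}(y,0,\dots,0)$ over primes via independence, reduce each split-prime factor to $J_0(yr_p/p^{\sigma})+O(y/p^{2\sigma})$, and exploit a set of $\gg y^{1/\sigma}/\log y$ split primes on which $r_p\neq 0$ and the Bessel factor is bounded below $1$. The only cosmetic differences are that the paper fixes a single ideal class $\mathcal{C}$ with $r_p=a\neq 0$ constant and uses the explicit bound $|J_0(x)|\le e^{-1/2}$ for $x\ge 2$ on the range $(y/c)^{1/(2\sigma)}\le p\le (ay/2)^{1/\sigma}$, whereas you take $p\in[y^{1/\sigma},2y^{1/\sigma}]$, allow all classes outside $\ker(\chi_1\overline{\chi_J})\cup\ker(\chi_1\chi_J)$, and invoke compactness for the $J_0$ bound; these lead to the same conclusion.
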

 
 \begin{proof} 
 \begin{align*}
  \widehat{\Psi}  (y, 0 , \dots, 0 )   &  =  \E \bigg(  \exp \bigg[  i y \log \bigg| \frac{c_1 L_1}{c_J L_J} ( \sigma, X)    \bigg| \bigg]  \bigg) \\
  &  =  \E \bigg(  \exp \bigg[  i y   \bigg(   \log \bigg| \frac{c_1  }{c_J  }      \bigg|   + \Re  \sum_{p, n}  \frac{ ( a_1 (p^n)- a_J (p^n))  X(p)^n }{p^{n\sigma} } \bigg) \bigg]  \bigg)   \\
  &  =   \bigg| \frac{c_1  }{c_J  }      \bigg|^{iy} \  \prod_p   \E \bigg( \exp \bigg[  i y   \bigg(   \Re  \sum_{ n}  \frac{ ( a_1 (p^n)- a_J (p^n))  X(p)^n }{p^{n\sigma} } \bigg) \bigg]  \bigg) .
 \end{align*}
It is easy to see that for each $p$
$$   \bigg|  \E \bigg(  \exp \bigg[  i y   \bigg(   \Re  \sum_{ n}  \frac{ ( a_1 (p^n)- a_J (p^n))  X(p)^n }{p^{n\sigma} } \bigg) \bigg]  \bigg) \bigg| \leq 1 .$$
We next show that  there are a number of  $p$  for which
 $$   \bigg|  \E \bigg(  \exp \bigg[  i y   \bigg(   \Re  \sum_{ n}  \frac{ ( a_1 (p^n)- a_J (p^n))  X(p)^n }{p^{n\sigma} } \bigg) \bigg]  \bigg)  \bigg| \leq e^{-1/3} .$$
 Since $a_j (p)$ is a real number for every prime $p$ and for each $j \leq J$, we have
  for $ y \leq p^{2 \sigma }/2 $ that
  \begin{align*}
     \E \bigg(  \exp \bigg[  i y   \bigg(   \Re  \sum_{ n} 
     & \frac{ ( a_1 (p^n)- a_J (p^n))   X(p)^n }{p^{n\sigma} } \bigg) \bigg]  \bigg)\\
         &  =  \E \bigg(  \exp \bigg[  i y   \bigg(      \frac{ a_1 (p )- a_J (p )   }{p^{ \sigma} }   \Re  X(p)    \bigg) + O \bigg(  \frac{y}{ p^{2\sigma}}   \bigg) \bigg]  \bigg) \\
       &  =  \E \bigg(  \exp \bigg[  i y   \bigg(      \frac{ a_1 (p )- a_J (p )   }{p^{ \sigma} }   \Re  X(p)    \bigg)  \bigg] \bigg)  + O \bigg(  \frac{y}{ p^{2\sigma}}   \bigg)   \\
        &  = J_0     \bigg(      \frac{ a_1 (p )- a_J (p )   }{p^{ \sigma} } y       \bigg)  + O \bigg(  \frac{y}{ p^{2\sigma}}   \bigg)   ,
 \end{align*}
 where $J_0$ is the Bessel function of order 0. 
 
 Recall that $ a_j (p) = 2 \Re \chi_j ( \frak p)$ if $p$ splits and $ \frak{p} | p $. Since $ \chi_1 \neq \chi_J $ and $ \chi_1 \neq \overline{\chi_J}$, there is an ideal class $\mathcal{C}$ such that $\chi_1 (\frak{p}) \neq \chi_J ( \frak{p} ) $ and $\chi_1 (\frak{p}) \neq \overline{ \chi_J ( \frak{p} )} $ for all $ \frak{p} \in \mathcal{C}$. Since $ | \chi_j (\frak{p})|=1$, we see that
 $$ a_1 (p) - a_J (p) = 2\Re ( \chi_1 ( \frak{p} ) - \chi_J ( \frak{p} ) ) = a \neq 0 $$
 for all $ \frak{p} \in \mathcal{C} $ and $ \frak{p} | p $.
 Using the crude inequality 
 $$ | J_0 (x) |  \leq e^{-1/2} \qquad\quad (x\geq 2),$$ 
 we find that 
 $$ \bigg| J_0     \bigg(      \frac{ a_1 (p )- a_J (p )   }{p^{ \sigma} } y       \bigg)  + O \bigg(  \frac{y}{ p^{2\sigma}}   \bigg)   \bigg| \leq  e^{-1/2} + O \bigg(  \frac{y}{ p^{2\sigma}}   \bigg)   \leq e^{-1/3}, $$
 provided that $ p$, with  $ \frak{p} \in \mathcal{C} $ and $ \frak{p} | p $, satisfies the conditions
 $$ \frac{ay}{p^\sigma}   \geq 2 \qquad \hbox{and} \qquad
  \frac{ y}{p^{2 \sigma}} \leq c $$
 for some small constant $c>0$. Therefore 
  \begin{align*}
|  \widehat{\Psi}  (y, 0 , \dots, 0 ) |  &    \leq       \prod_{p \in \mathscr P }  e^{-1/3} ,
 \end{align*}
 where $\mathscr P$ is the set of all $p$ with  $ \frak{p} \in \mathcal{C} $, $ \frak{p} | p $, and
 $$ (y/c)^{1/(2\sigma)} \leq p \leq ( ay/2)^{1/\sigma} . $$
 By Lemma 2.6 of \cite{Le2}, for example, there are 
 $$ \frac{1}{h(D)} \frac{  ( ay/2)^{1/\sigma} }{ \log ( ay/2)^{1/\sigma}  }   (1+o(1)) \sim C_\sigma \frac{ y^{1/\sigma}}{ \log y }    $$
 such $p$ as $ y \to \infty$. Thus,
$$ |  \widehat{\Psi}  (y, 0 , \dots, 0 ) |      \leq     \exp\bigg( -\frac{ C_\sigma }{4} \frac{ y^{1/\sigma}}{ \log y }  \bigg)$$
 as $ y \to \infty$.
 
 \end{proof}



\subsection{Completion of the proof of Theorem~\ref{lem: discrepancy}}

We can now prove Theorem~\ref{lem: discrepancy}.   For a sufficiently large constant $A>0$  the set 
$$ \big\{ t \in [T, 2T] : \L ( \sigma + it) \notin [ -A\sL  , A\sL]^{2J}  \big\} $$
has  small measure by Lemma \ref{lem: 2kth mmt imag}. Similarly, by Lemma \ref{2kth mmt random}
$$\P \big(  \L ( \sigma , X) \notin  [ -A\sL  , A\sL]^{2J}  \big)$$
is small. Thus it is enough to consider rectangular regions $R$ contained in $ [ -A\sL  , A\sL]^{2J} $.

 Let $\eta  = b_1 ( \log T)^\sigma $, where $b_1$ is a positive constant such that $b=  2\pi b_1 $ satisfies Lemma \ref{lem: transforms}, and define
$$ G(u) = \frac{2u}{\pi} + 2(1-u) u \cot (\pi u )$$
for $ 0  \leq u \leq 1 $. By Lemma 4.1 of \cite{Ts}, the characteristic function of the interval $[\alpha, \beta]$ is 
$$ {\bf 1}_{[\alpha, \beta]} (x) = \Im \int_0^\eta G( u/\eta) e^{2 \pi i ux} f_{\alpha, \beta} (u) \frac{du}{u} 
+ O \Bigg\{  \left( \frac{ \sin \pi \eta (x-\alpha) }{ \pi \eta (x-\alpha) } \right)^2  +  \left( \frac{ \sin \pi \eta (x-\beta) }{ \pi \eta (x-\beta) } \right)^2  \Bigg\}, $$
where $f_{\alpha, \beta} (u) = ( e^{- 2 \pi i \alpha u } - e^{ - 2 \pi i \beta u } )/2 $. Thus the characteristic function of the rectangular region $R = \prod_{j \leq J} [ \alpha_j , \beta_j ] \times \prod_{ j \leq J } [ \alpha'_j, \beta'_j ] $ is
\begin{equation}\label{char fnc}
\begin{split}
 {\bf 1}_R (  \u , \v ) = & W_{\eta, R}( \u,\v)  
 +\sum_{j \leq J}  O \Bigg\{  \bigg( \frac{ \sin \pi \eta (u_j -\alpha_j) }{ \pi \eta (u_j -\alpha_j ) } \bigg)^2  
  +   \bigg( \frac{ \sin \pi \eta (u_j -\beta_j ) }{ \pi \eta (u_j -\beta_j ) }  \bigg)^2    \\
  & \hskip1.4in   +   \bigg( \frac{ \sin \pi \eta (v_j -\alpha'_j) }{ \pi \eta (v_j -\alpha'_j ) } \bigg)^2  
  +  \bigg( \frac{ \sin \pi \eta (v_j -\beta'_j ) }{ \pi \eta (v_j -\beta'_j ) } \bigg)^2  \Bigg\},
 \end{split}
 \end{equation}
 where
\be\label{W def}
 W_{\eta, R}( \u,\v)  := \prod_{j \leq J } \left( \Im \int_0^\eta G( u/\eta) e^{2 \pi i u u_j} f_{\alpha_j , \beta_j } (u) \frac{du}{u}\right) \times \prod_{j \leq J } \left( \Im \int_0^\eta G( u/\eta) e^{2 \pi i u v_j} f_{\alpha'_j , \beta'_j } (u) \frac{du}{u}\right) .
 \ee
Assuming that $R = \prod_{j \leq J} [ \alpha_j , \beta_j ] \times \prod_{ j \leq J } [ \alpha'_j, \beta'_j ] 
\subset [ -A\sL  , A\sL]^{2J} $, we see that
  $$  \Psi_T ( R ) = \frac1T \int_T^{2T} {\bf 1}_R (  \L  ( \sigma + it)  ) dt =   \frac1T \int_T^{2T} W_{\eta, R}  (  \L  ( \sigma + it)  ) dt  + \mathscr E_1  $$
and 
 $$ \Psi  ( R)  =  \E  \big( W_{\eta,R} (     \L  ( \sigma, X ) )  \big) +\mathscr E_2 ,$$
where  $\mathscr E_1$ and $\mathscr E_2$ are the error terms arising from 
the $O$-terms in \eqref{char fnc}.
 
 To estimate these error terms  we begin with  the  identity
 $$ \frac{ \sin^2 ( \pi \eta x ) }{ ( \pi \eta x )^2 }   = \Re \bigg[   \frac{1}{2 \pi^2 \eta^2 } \int_0^{2 \pi \eta}  ( 2 \pi \eta- y) e^{  i xy } dy    \bigg] .$$
A typical term in $\mathscr E_1$ is 
 \begin{align*}
  \frac{1}{T} \int_T^{2T} &  \frac{ \sin^2 ( \pi \eta (\log | c_1 L_1/c_J L_J (s)|  - \alpha_1)) }{ ( \pi \eta  (\log | c_1 L_1/c_J L_J (s)| -\alpha_1) )^2 }   dt   \\
 &\hskip.7in = \Re \bigg(  \frac{1}{2 \pi^2 \eta^2 } \int_0^{2 \pi \eta}  ( 2 \pi \eta- y)   \frac{1}{T} \int_T^{2T}  e^{  i y (\log | c_1 L_1/c_J L_J (s)|  - \alpha_1)    } dt dy    \bigg)  \\
  & \hskip.7in = \Re \bigg(  \frac{1}{2 \pi^2 \eta^2 } \int_0^{2 \pi \eta}  ( 2 \pi \eta- y)    e^{-  i y \alpha_1} \widehat{\Psi}_T (  y, 0 , \dots, 0 )    dy    \bigg)  .
 \end{align*}
 By Lemma \ref{lem: transforms} this equals
  \begin{align*}
       \Re \bigg(   \frac{1}{2 \pi^2 \eta^2 } \int_0^{2 \pi \eta}  ( 2 \pi \eta- y)       e^{-  i y \alpha_1}   \widehat{\Psi}  (  y, 0 , \dots, 0 )    dy    \bigg)  +O(( \log T)^{-A}).
 \end{align*}
 Note that this is also a typical term in $\mathscr E_2$. By 
 Lemma~\ref{lem:upper bound hat psi y} this is
   \begin{align*}
   & \ll    \frac{1}{  \eta^2 } \int_0^{2  }  ( 2 \pi \eta- y)       dy    +  \frac{1}{  \eta^2 } \int_2^{2 \pi \eta  }  ( 2 \pi \eta- y)       \exp\bigg( -\frac{ C_\sigma }{4} \frac{ y^{1/\sigma}}{ \log y }  \bigg) dy  +O(( \log T)^{-A})\\
   &\ll \frac{1}{\eta}.
 \end{align*}
 All the other terms  in $\mathscr E_1$ and $\mathscr E_2$ are estimated similarly.
 Thus, it is enough to show that
 $$ \frac1T \int_T^{2T} W_{\eta, R}  (  \L  ( \sigma + it)  ) dt =  \E  \big( W_{\eta,R} (     \L  ( \sigma, X ) )  \big) + O( 1/\eta )   .$$

Using $ \Im z = (z - \bar{z} ) / 2i $ to rewrite the imaginary parts in \eqref{W def},  we see that
\begin{equation}\notag
\begin{split}
 W_{\eta, R}( \u,\v)  =& (-4)^{-J} \sum_{ \epsilon_j , \epsilon'_j = \pm 1}   
 \Bigg\{ \prod_{j \leq J } \bigg(   \int_0^\eta G( w_j /\eta) \epsilon_j e^{\epsilon_j 2 \pi i w_j u_j} f_{\epsilon_j\alpha_j , \epsilon_j\beta_j } (w_j) \frac{dw_j}{w_j }\bigg)  \\ 
 &\hskip1.2in  \times \prod_{j \leq J } \bigg(   \int_0^\eta G( z_j/\eta) \epsilon'_j e^{ \epsilon'_j 2 \pi i z_j v_j} f_{ \epsilon'_j \alpha'_j ,  \epsilon'_j \beta'_j } (z_j) \frac{dz_j}{z_j}\bigg)  \Bigg\} \\
 =& (-4)^{-J} \sum_{ \epsilon_j , \epsilon'_j = \pm 1} \int_{[0,\eta]^{2J}} \prod_{j \leq J} 
 \Big\{G( w_j /\eta)  G( z_j/\eta)  \epsilon_j  \epsilon'_j  f_{\epsilon_j\alpha_j , \epsilon_j\beta_j } (w_j)  
   f_{ \epsilon'_j \alpha'_j ,  \epsilon'_j \beta'_j } (z_j) \Big\}  \\ 
& \hskip1.2in \times   \exp \bigg( { \sum_{j \leq J} \epsilon_j 2 \pi i w_j u_j + \sum_{j \leq J}   \epsilon'_j 2 \pi i z_j v_j} \bigg)   \frac{dw_1}{w_1} \cdots \frac{dw_J}{w_J} \frac{dz_1 }{z_1} \cdots \frac{dz_J}{z_J}. 
 \end{split}\end{equation}
Thus
 \begin{equation}\notag
 \begin{split}
 \frac1T \int_T^{2T} W_{\eta, R} & (  \L  ( \sigma + it)  ) dt \\
 =  &  (-4)^{-J} \sum_{ \epsilon_j , \epsilon'_j = \pm 1} \int_{[0,\eta]^{2J}} \prod_{j \leq J} \big(G( w_j /\eta)  G( z_j/\eta)  \epsilon_j  \epsilon'_j  f_{\epsilon_j\alpha_j , \epsilon_j\beta_j } (w_j)  
   f_{ \epsilon'_j \alpha'_j ,  \epsilon'_j \beta'_j } (z_j) \big)  \\ 
&\quad\qquad  \widehat{\Psi}_T ( 2 \pi  \epsilon_1 w_1 , \dots, 2 \pi \epsilon_J w_J, 2 \pi \epsilon'_1 z_1 , \dots,  2 \pi  \epsilon'_J z_J  )   \frac{dw_1}{w_1} \cdots \frac{dw_J}{w_J} \frac{dz_1 }{z_1} \cdots \frac{dz_J}{z_J} 
 \end{split}\end{equation}
and
\begin{equation}\notag
\begin{split}
   \E  \big( W_{\eta,R} &(     \L  ( \sigma, X ) )  \big)\\
     =  &   (-4)^{-J} \sum_{ \epsilon_j , \epsilon'_j = \pm 1} \int_{[0,\eta]^{2J}} \prod_{j \leq J} \big(G( w_j /\eta)  G( z_j/\eta)  \epsilon_j  \epsilon'_j  f_{\epsilon_j\alpha_j , \epsilon_j\beta_j } (w_j)  
   f_{ \epsilon'_j \alpha'_j ,  \epsilon'_j \beta'_j } (z_j) \big)  \\ 
& \quad\qquad   \widehat{\Psi} ( 2 \pi  \epsilon_1 w_1 , \dots, 2 \pi \epsilon_J w_J, 2 \pi \epsilon'_1 z_1 , \dots,  2 \pi  \epsilon'_J z_J  )   \frac{dw_1}{w_1} \cdots \frac{dw_J}{w_J} \frac{dz_1 }{z_1} \cdots \frac{dz_J}{z_J} . 
 \end{split}\end{equation}
 Since
$$ \widehat{\Psi}_T ( \w, \z) - \widehat{\Psi} (\w,\z) = O ( ( \log T)^{-A})$$
  for all $ |w_j|, |z_j| \leq \eta $ by Lemma~\ref{lem: transforms}, we have
 \begin{equation}\begin{split}
 & \frac1T \int_T^{2T} W_{\eta, R}  (  \L  ( \sigma + it)  ) dt  -    \E  \big( W_{\eta,R} (     \L  ( \sigma, X ) )  \big)  \\
 \ll  &   \sum_{ \epsilon_j , \epsilon'_j = \pm 1} \int_{[0,\eta]^{2J}} \prod_{j \leq J} \big| G( w_j /\eta)  G( z_j/\eta)     f_{\epsilon_j\alpha_j , \epsilon_j\beta_j } (w_j)    f_{ \epsilon'_j \alpha'_j ,  \epsilon'_j \beta'_j } (z_j) \big|   \frac{1}{( \log T)^{A}}   \frac{dw_1}{w_1} \cdots \frac{dw_J}{w_J} \frac{dz_1 }{z_1} \cdots \frac{dz_J}{z_J}  \\
 \ll &   \frac{ \eta^{2J} \sL^{2J} }{ (\log T)^{A}} \\
 \ll & 1/\eta,
 \end{split}
 \end{equation}
provided we choose $A >0 $ sufficiently large. Here, we have used the inequalities
$$ 0 \leq G(x) \leq 2/\pi$$
for $ x \in [0,1]$ and 
$$f_{\alpha, \beta}(u) \ll |(\beta-\alpha)u|\ll \sL |u|.$$
This completes the proof of Theorem~\ref{lem: discrepancy}.


\section{Proof of Theorem~\ref{main thrm} }\label{begin proof}

Let $J \geq 2 $ and for each $j \leq J$ define
$$S_j (T) = \{ t \in [T,2T] : |c_j L_j  ( \sigma + it) 
| \geq |c_i L_i ( \sigma +it) | \mathrm{~for~ all~} i \leq J \} .$$
 By the inclusion-exclusion principle we see that
\begin{align}\label{log E est 1}\notag
 \frac1T \int_T^{2T} \log |E (\sigma+it, Q)| dt  
 = &  \sum_{1\leq j \leq J} \frac1T \int_{S_j(T)} \log |E(\sigma+it, Q)| dt \\
 & - \sum_{1\leq   j_1 < j_2 \leq J } \frac1T 
 \int_{S_{j_1} (T)\, \cap\, S_{j_2} (T)}  \log |E (\sigma + it, Q) | dt    \\
 &+\sum_{1\leq   j_1 < j_2<j_3 \leq J } \frac1T 
 \int_{S_{j_1} (T)\, \cap \,S_{j_2} (T)\, \cap\, S_{j_3} (T)}  \log |E (\sigma + it, Q) | dt   \notag    \\
 &+ \cdots 
     \notag \\
 & +(-1)^{J-1}  \frac1T \int_{S_{1} (T) \,\cap \,S_{2} (T)\, \cap    \,
 \cdots \,   \cap\,  S_{J} (T)}  \log |E (\sigma + it, Q) | dt .\notag
 \end{align}
We first show that  for $1\leq i\neq j \leq J$  we have
$$
 \int_{S_{i} (T)\, \cap\, S_{j } (T)}  |\, \log |E (\sigma + it, Q) | \, |dt
\ll T\frac{\sL^2}{(\log T)^\sigma} .
$$
 Without loss of generality, we may assume that   $i=1$ and $j=J$. 
 By H\"older's inequality and Lemma~\ref{lem: 2kth mmt real}
\begin{align}\label{S1 cap S2} \notag
 \int_{S_{1} (T)\, \cap\, S_{J } (T)   } & |\, \log |E (\sigma + it, Q) |\,| \, dt \\
 &\ll  \left(   \int_T^{2T} | \, \log |E (\sigma + it, Q) |\, |^{2k} dt \right)^{\frac{1}{2k}} 
\times     \mathrm{meas}\,  (  S_{1} (T)\, \cap \, S_{J} (T) )^{ 1- \frac{1}{2k} }     \\
&\ll   k^2 \  T^{\frac1{2k}}\, \big(\mathrm{meas}\,  (S_{1} (T)\, \cap \, S_{J} (T) )\big)^{1- \frac{1}{2k} } .\notag
\end{align}
 Now
\begin{align*} 
 S_{1} (T) \cap  S_{J} (T)
\subseteq & \
\{ \;  t \in [T,2T] : |c_1L_1  ( s) |=
|c_J L_J (s) | \; \}  \\
 = &\
\{ \;  t \in [T,2T] :    \log   | c_1 L_1  (s)/c_J L_J (s) |   =0  \} \\
= & \{ \;  t \in [T,2T] : \L(s) \in   \{0\}   
 \times   {\mathbb R }^{2J-1}   \} .
\end{align*}
By Lemma~\ref{lem: discrepancy},  and since $\Psi$ is absolutely continuous, 
the measure of this set equals
 \be\notag
\begin{split}
 T \  &\Psi  \big(  \{0\}   
 \times   {\mathbb R }^{2J-1}   \big)
+O(T(\log T)^{-\sigma})  =O(T(\log T)^{-\sigma}).
\end{split}
\ee
Hence
\be\notag
\mathrm{meas} \  (S_1 (T) \, \cap \, S_J (T) )
\ll  T(\log T)^{-\sigma} .
\ee
Thus, by  \eqref{S1 cap S2} we see that 
$$
 \int_{S_{1} (T)\, \cap\, S_{J } (T)   }  |\, \log |E (\sigma + it, Q) |\,| \, dt 
 \ll 
 k^2 T \, \bigg(\frac{1}{  (\log T)^{ \sigma} } \bigg)^{1-\frac{1}{2k}}.
$$
Taking  $k=\sL$, we obtain
$$
 \int_{S_{1} (T)\, \cap\, S_{J } (T)   }  |  \log |E (\sigma + it, Q) |\ | \, dt 
 \ll 
   T \, \frac{\sL^2}{  (\log T)^{ \sigma} } ,
$$
as claimed.

Clearly this bound applies to all the integrals   on the right-hand side of  
\eqref{log E est 1} that involve 
 two or more $S_i(T)$'s. Thus we see that
\begin{align}\notag\label{eqn: E int 1} 
 \frac1T \int_T^{2T} \log |E (\sigma+it, Q)| dt  
 = &  \sum_{j \leq J} \frac1T \int_{S_j(T)} \log |E(\sigma+it, Q)| dt 
 +O  \bigg(  \frac{2^J  \sL^2}{ (\log T)^{ \s} } \bigg).
  \end{align}
Our next task is to estimate the individual terms here and, without loss of generality, 
we consider only the case $j=J$.  
We write
\be\label{eqn: E int split}
\begin{split} 
\frac1T & \int_{S_J(T)} \log  |E(\sigma+it, Q)| dt \\
=  &  \frac1T \,\int_{S_J (T)} \log |c_J L_J (\sigma+it) | dt 
+  \frac1T \int_{S_J (T)} 
\log \bigg|1 + \sum_{  j <J }  \frac{c_j}{c_J}  \frac{L_j }{ L_J} (\sigma+it) \bigg| dt ,  
\end{split}
\ee
and calculate the integrals on the right in the next two subsections.

 
\subsection{The first integral in \eqref{eqn: E int split} }\label{first integral}

Recall that
$$
S_J (T)   = \big\{ t \in [T,2T] : |c_J L_J ( \sigma + it) 
| \geq |c_j L_j ( \sigma +it) | ,\;  j < J \big\} .
$$
 and 
 $$I_T =(-A\sL , A\sL ].$$
 We also define
 $$ I_T^{-} =(-A \sL , 0] .$$ 
 By Lemma~\ref{lem: 2kth mmt real} we may restrict the range of integration to the set
\be\notag
 \begin{split}
  S_{J,1} (T)  = \bigg\{ t \in  S_J(T) : &  \log |c_J L_J ( \sigma + it)| \in I_T, \;
\arg c_J L_J ( \sigma+it) \in I_T, 
  \\
  &  \log \bigg|\frac{c_jL_j }{c_J L_J } ( \sigma + it) \bigg|\in  I_T^{-},\;   \arg c_jL_j ( \sigma + it) - \arg c_J L_J ( \sigma + it) \in I_T   ,  j < J    \bigg\}  
  \end{split}
\ee
 at the cost of an error term of size $O(T(\log T)^{-B})$ . That is, 
\be\label{S1 int} 
\frac1T \int_{S_J (T)} \log |c_J L_J ( \sigma+it) | dt 
=  \frac1T \int_{S_{J, 1}(T) } \log |c_J L_J (\sigma+it ) | dt + O( ( \log T)^{-B}). 
\ee

  Letting   $ \u = ( u_1 , \dots , u_J ) $ and $\v = (v_1, \dots, v_J )$, we see that
\begin{equation}  \label{eqn: 1}
\begin{split}
 &  \frac1T \int_{S_{J,1} (T)} \log |c_J L_J (\sigma+it) | \; dt   \\
 &  =   \int_{    (I_T^{-})^{J-1} \times I_T \times {(I_T)}^J  }     u_J    \;  d\Psi_T (\u, \v) \\
 &  =   \int_{  (I_T^{-})^{J-1} \times I_T \times (I_T)^J}    
 \bigg( \int_{-A\sL }^{u_J} du - A\sL  \bigg) \, d\Psi_T  (\u ,\v) \\
  &  =   \int_{ (I_T^{-})^{J-1} \times I_T \times (I_T)^J}     
  \bigg( \int_{- A\sL  }^{u_J} du\; \bigg)  d\Psi_T (\u, \v )   
   - A\sL  \; \Psi_T \big( (I_T^{-})^{J-1} \times I_T \times {(I_T)}^J \big) \\
    &  =   \int_{ I_T } \bigg(  \int_{ (I_T^{-})^{J-1} \times  (u , A\sL ]  \times {(I_T)}^J  }    
    d\Psi_T  (\u , \v)  \bigg) d {u}   
    -  A\sL \;  \Psi_T \big( (I_T^{-})^{J-1} \times I_T \times {(I_T)}^J  \big) \\
        &  =   \int_{I_T}   \Psi_T\big(   (I_T^{-})^{J-1} \times  (u , A\sL ]  \times {(I_T)}^J  \big) 
              d{u}   -A\sL \; \Psi_T 
               \big( (I_T^{-})^{J-1} \times I_T \times {(I_T)}^J \big)  .
 \end{split}
 \end{equation}  
Thus, by   Lemma~\ref{lem: discrepancy}, the last line equals
\begin{align*} 
 \int_{I_T }   \Psi \big(  (I_T^{-})^{J-1} \times  (u , A\sL ]  \times {(I_T)}^J  \big) 
d{u} &  -A\sL \; \Psi \big( (I_T^{-})^{J-1} \times I_T \times {(I_T)}^J  \big) 
 +O \bigg( \frac{ \sL }{ (\log T)^\sigma} \bigg) .
\end{align*}
Now simply reverse all the steps leading to the last line of \eqref{eqn: 1} to see that
\be\notag
 \frac1T \int_{S_{J, 1} (T)} \log |c_J L_J (\sigma+it) | \; dt   
 =  \int_{    (I_T^{-})^{J-1} \times I_T \times {(I_T)}^J  }     u_J    \;  d\Psi (\u, \v)  
 +O \bigg( \frac{ \sL }{ (\log T)^\sigma} \bigg) .
\ee
%
 Therefore, by \eqref{S1 int}
 \be\label{S1 int 2} 
 \frac1T \int_{S_{J} (T)} \log |c_J L_J (\sigma+it) | \; dt   
 =  \int_{    (I_T^{-})^{J-1} \times I_T \times {(I_T)}^J  }     u_J    \;  d\Psi (\u, \v)  
 +O \bigg( \frac{ \sL }{ (\log T)^\sigma} \bigg) .
\ee


\subsection{The second integral in \eqref{eqn: E int split}}\label{step 2}

As with the first integral in \eqref{eqn: E int split}, we begin by limiting the range of integration. 
Let
\begin{equation}\notag
\begin{split}
S_{J,2} (T)  := &  \big\{ t \in [T, 2T]):    \log|c_J L_J ( \sigma +it) | \in I_T, \arg c_J L_J ( \sigma +it) \in I_T, \\
  & \log |c_jL_j / c_JL_J  ( \sigma + it) |  \in I_T^{-} , 
    \arg c_jL_j( \sigma +it)-c_J L_J ( \sigma +it) \in I_T  \mathrm{~for~ all ~}  j < J \big\}  \\
   \subseteq & S_J (T)= \big\{ t \in [T,2T] : |c_J L_J ( \sigma + it) 
| \geq |c_j L_j ( \sigma +it) | ,\;  j < J \big\} .
\end{split}
\end{equation}
Then
$$  \frac1T \int_{S_J (T)} \log \bigg|1 + \sum_{  j <  J }  
\frac{c_j}{c_J}  \frac{L_j }{ L_J} (\sigma+it) \bigg| dt  
=  \frac1T \int_{\S_{J,2} (T)} \log \bigg|1 
+ \sum_{   j < J }  \frac{c_j}{c_J}  
\frac{L_j }{ L_J} (\sigma+it) \bigg| dt  + O(( \log T)^{-B})
$$
by Lemmas~\ref{lem: 2kth mmt real} and ~\ref{lem: 2kth mmt imag}.

 Let $\u = ( u_1 , \dots , u_J) $ and $ \v = ( v_1 , \dots , v_J)$ be two vectors in $\mathbb{R}^J$. We would like to show that
$$  
\frac1T \int_{S_{J,2} (T)} \log \bigg|1 + \sum_{  j < J }  \frac{c_j}{c_J}  \frac{L_j }{ L_J} (\s+it) \bigg| dt =  \int_{ (I^{-}_T)^{J-1}\times  I_T  \times (I_T)^{J}   }   \log \bigg| 1 + \sum_{ j \leq J -1} e^{u_j + i v_j } \bigg|         d\Psi_{T} ( \u, \v)  ,
$$
but this is not straightforward because  the integrand has  logarithmic singularities. To handle this we split the integral into small pieces by dividing the set $ (I^{-}_T)^{J-1}\times I_T \times (I_T)^{J}  $ into at most $O(\sL)$ rectangular regions.

Let $\delta   = ( \log T)^{- c_0}$ with a small constant $c_0>0$. Let $\m_{J-2} = ( m_1 , \dots, m_{J-2})$ and $ \n_{J-2} = ( n_1, \dots , n_{J-2})$ be two vectors in $\mathbb{Z}^{J-2}$ and $ \u_{J-2} = ( u_1, \dots , u_{J-2} ) $ and $\v_{J-2} = ( v_1 , \dots , v_{J-2}) $ be projections of 
the vectors $\u = ( u_1 , \dots , u_{J}) $ and $ \v = ( v_1 , \dots , v_{J})$ into $\mathbb{R}^{J-2}$. Define a $(2J-4)$-dimensional rectangular region 
\begin{align*}
  {Rect}(\m,\n) := \big\{ (\u_{J-2} , \v_{J-2} )  \in (I_T^{-})^{J-2}  \times (I_T &)^{J-2}  :  \;
      m_j  \delta < e^{u_j } \leq (m_j +1)\delta , \\
      &  n_j \delta < v_j \leq  (n_j +1 ) \delta \  \mathrm{~for~all~} j \leq J-2\ \big\}         
\end{align*}
for $ 0 \leq m_j \leq 1/\delta -1 $, $ | n_j | \leq A\sL / \delta$, and $ j \leq J-2$.
Then 
$$ (I^{-}_T)^{J-2} \times (I_T)^{J-2} = \bigcup_{\m, \n} Rect(\m,\n).$$
If $m< 1/ \delta  $, the set 
$$ \big\{ e^{u+iv} \in \mathbb{C} \; : \;  m\delta <  e^u \leq (m+1) \delta ,\; n \delta < v \leq (n+1) \delta \    \big\} $$
 has  diameter    $ \leq \delta $. The set
$$\bigg\{ 1 + \sum_{j \leq J-2} e^{u_j + i v_j } \in \mathbb{C} \  :  \  (\u_{J-2} , \v_{J-2}) \in Rect(\m,\n )     \bigg\}  $$ 
is therefore   contained in a circle in $\mathbb{C}$ of radius at most $J\delta $. Let  $s_0$ be the center of this circle. Then since each $u_j\leq 0$, we have   $|s_0| \leq J-1$. We consider four cases depending on the size of $|s_0|$.

\noindent{\bf Case 1:   $|s_0 | \leq 10 J \delta $.}\,
\ 
Define
   $$ R_{main} (\m,\n) = \{ (\u,\v) \in  (I^{-}_T)^{J-1}\times I_T \times (I_T)^{J}  :\; (\u_{J-2}, \v_{J-2} ) \in Rect(\m,\n) , \ 12J\delta  <e^ {u_{J-1}} \leq 1      \} $$
   and 
\begin{align*} 
R_{error} (\m,\n) = &\{ (\u,\v) \in  (I^{-}_T)^{J-1}\times I_T \times (I_T)^{J}  :\; (\u_{J-2}, \v_{J-2} ) \in Rect(\m,\n) ,  \
e^{-A\sL} < e^ {u_{J-1}} \leq 12J\delta  \} ,
\end{align*}
so that
$$ (I^{-}_T)^{J-1}\times I_T \times (I_T)^{J}= \bigcup_{\m,\n} \Big(R_{main} (\m,\n) \bigcup R_{error} (\m,\n) \Big) . $$
Then
\be\label{sum lower bd 1} 
\bigg|  1 + \sum_{j \leq J-1} e^{u_j + i v_j }  \bigg| \geq e^{u_{J-1}} - \bigg|  1 + \sum_{j \leq J-2} e^{u_j + i v_j }  \bigg|  \geq  J \delta 
\ee
for $(\u,\v) \in R_{main}(\m,\n)$, and
$$\bigg|  1 + \sum_{j \leq J-1} e^{u_j + i v_j }  \bigg| 
\leq \bigg|  1 + \sum_{j \leq J-2} e^{u_j + i v_j }  \bigg| + e^{u_{J-1}} \leq 23J\delta $$
for $(\u,\v) \in R_{error}(\m,\n)$ .
 Observe that 
 $$
\log \bigg| 1 + \sum_{ j \leq J -1} e^{u_j + i v_j } \bigg|      
$$
may have singularities on $R_{error}(\m,\n)$, but not on $R_{main}(\m,\n)$
because of \eqref{sum lower bd 1}.

\noindent{\bf  Case 2:  $10 J \delta < |s_0| \leq 1-2J\delta$.} \, 
In this case the inequality 
  $$ \bigg|  1 + \sum_{j \leq J-1} e^{u_j + i v_j }  \bigg| \geq  J \delta $$
  holds if $ | e^{u_{J-1 } + i v_{J-1} } + s_0 | \geq 2J\delta $. We define
  $$ R_{main} (\m,\n) =  R_1 (\m,\n) \bigcup  R_2 (\m,\n) \bigcup 
 \bigg( \bigcup_{ \substack{l\in \mathbb Z\\ |l|  \leq A\sL/2\pi}}  R_{3, l} (\m,\n)  \bigg),
  $$
  where
  \be\notag
 R_1 (\m,\n)  =  \{  (\u,\v) \in  (I^{-}_T)^{J-1}\times I_T \times (I_T)^{J}  : (\u_{J-2} , \v_{J-2} ) \in Rect (\m,\n) ,  
   e^{-A\sL } < e^ {u_{J-1}} \leq |s_0|-2J\delta   \} , 
   \ee
   \be\notag
R_2 (\m,\n) = \{   (\u,\v) \in  (I^{-}_T)^{J-1}\times I_T \times (I_T)^{J} : (\u_{J-2} , \v_{J-2} ) \in Rect (\m,\n) ,   |s_0| + 2J\delta < e^{u_{J-1}} \leq 1    \}  ,
\ee
    and
 \begin{align*} 
R_{3 , l}(\m,\n) =  
 \{  (\u,\v) \in &  (I^{-}_T)^{J-1}\times I_T \times (I_T)^{J}  : (\u_{J-2} , \v_{J-2} ) \in Rect (\m,\n) ,   
  |s_0| -2J\delta<   e^ {u_{J-1}}  \leq  |s_0|+  2J\delta ,  \\
 & -(\pi-\arcsin ( 2J\delta/|s_0|) )< v_{J-1} - \arg s_0 -2\pi l   \leq \pi-\arcsin ( 2J\delta/|s_0|)  \}.
 \end{align*}
Also define 
      \begin{equation*}\begin{split}
   R_{error} (\m,\n) =&\bigcup_{ \substack{l\in \mathbb Z\\ |l|  \leq A\sL/2\pi}} 
    \{ (\u,\v) \in  (I^{-}_T)^{J-1}\times I_T \times (I_T)^{J}  : (\u_{J-2} , \v_{J-2} ) \in Rect (\m,\n) , \\
   & \hskip.9in      |s_0| - 2J\delta<      e^ {u_{J-1}}     \leq  |s_0|+ 2J\delta ,  \\
   & \hskip.9in   -\arcsin ( 2J\delta/|s_0|)<v_{J-1} - \arg s_0  -(2l+1)\pi  \leq \arcsin ( 2J\delta/|s_0|)   \}.
   \end{split}\end{equation*}
Then we see that the sets  $R_{main} (\m,\n) $ and  $R_{error} (\m,\n)$  are each  unions of roughly 
$ A\sL / \pi $   rectangular regions, and that 
\be\notag\label{sum lower bd 2} 
\bigg|  1 + \sum_{j \leq J-1} e^{u_j + i v_j }  \bigg| \geq  J \delta 
\ee
   for $(\u,\v) \in R_{main}(\m,\n)$, and
$$ \bigg|  1 + \sum_{j \leq J-1} e^{u_j + i v_j }   \bigg| 
\leq  \bigg|  1 + \sum_{j \leq J-2} e^{u_j + i v_j } -s_0  \bigg| 
+  \bigg| s_0 + e^{u_{J-1}+iv_{J-1}}  \bigg| \leq 10J\delta $$
    for $(\u,\v) \in R_{error}(\m,\n)$.

\noindent{\bf  Case 3:  $ 1-2J\delta  < |s_0| \leq 1+2J\delta$.} \, 
Similarly to Case 2, we define
  $$ R_{main} (\m,\n) =  R_1 (\m,\n)   \bigcup 
 \bigg( \bigcup_{ \substack{l\in \mathbb Z\\ |l|  \leq A\sL/2\pi}}  R_{4, l} (\m,\n)  \bigg),
  $$
  where
 \begin{align*} 
R_{4 , l}(\m,\n) =  
 \{ (\u,\v) \in & (I^{-}_T)^{J-1}\times I_T \times (I_T)^{J}  : (\u_{J-2} , \v_{J-2} ) \in Rect (\m,\n) ,   
   |s_0| -2J\delta<   e^ {u_{J-1}}  \leq 1 ,  \\
 & -(\pi-\arcsin ( 2J\delta/|s_0|) )< v_{J-1} - \arg s_0 -2\pi l   \leq \pi-\arcsin ( 2J\delta/|s_0|)  \}.
 \end{align*}
Also define 
      \begin{equation*}\begin{split}
   R_{error} (\m,\n) =&\bigcup_{ \substack{l\in \mathbb Z\\ |l|  \leq A\sL/2\pi}} 
    \{  (\u,\v) \in (I^{-}_T)^{J-1}\times I_T \times (I_T)^{J}   : (\u_{J-2} , \v_{J-2} ) \in Rect (\m,\n) ,  \\
   & \hskip.9in    
 |s_0|  - 2J\delta<      e^ {u_{J-1}}       \leq 1 ,  \\
   & \hskip.9in   -\arcsin ( 2J\delta/|s_0|)<v_{J-1} - \arg s_0  -(2l+1)\pi  \leq \arcsin ( 2J\delta/|s_0|)   \}.
   \end{split}\end{equation*}
Then we see that the sets  $R_{main} (\m,\n) $ and  $R_{error} (\m,\n)$  are each  unions of roughly 
$ A\sL / \pi $   rectangular regions, and that 
\be\notag
\bigg|  1 + \sum_{j \leq J-1} e^{u_j + i v_j }  \bigg| \geq  J \delta 
\ee
   for $(\u,\v) \in R_{main}(\m,\n)$, and
$$ \bigg|  1 + \sum_{j \leq J-1} e^{u_j + i v_j }   \bigg| 
\leq  \bigg|  1 + \sum_{j \leq J-2} e^{u_j + i v_j } -s_0  \bigg| 
+  \bigg| s_0 + e^{u_{J-1}+iv_{J-1}}  \bigg| \leq 10J\delta $$
    for $(\u,\v) \in R_{error}(\m,\n)$.

\noindent{\bf  Case 4:  $ 1+2J\delta < |s_0| \leq  J -1 $.} \, 
We define
$$ R_{main} (\m,\n) =  \{   (\u,\v) \in  (I^{-}_T)^{J-1}\times I_T \times (I_T)^{J}  : (\u_{J-2} , \v_{J-2} ) \in Rect (\m,\n)   \} 
  $$
and  
$$ R_{error} (\m, \n) = \emptyset .$$
Then
 \be\notag
\bigg|  1 + \sum_{j \leq J-1} e^{u_j + i v_j }  \bigg|  \geq \bigg|  1 + \sum_{j \leq J-1} e^{u_j + i v_j }  \bigg|  - e^{u_{J-1}} \geq (|s_0| - J\delta)  - 1 \geq  J \delta 
\ee
   for $(\u,\v) \in R_{main}(\m,\n)$.



Summarizing, we note  that in each case, we have
\be\label{lower bd 3}
\bigg|  1 + \sum_{j \leq J-1} e^{u_j + i v_j }  \bigg| \geq  J \delta 
\ee
if $(\u,\v) \in R_{main}(\m,\n)$,  
\be\label{upper bd 3}
\bigg|  1 + \sum_{j \leq J-1} e^{u_j + i v_j }  \bigg| \leq  23 J \delta 
\ee
if $(\u,\v) \in R_{error}(\m,\n)$, and
\be\label{set decomp}
  (I^{-}_T)^{J-1}\times I_T \times (I_T)^{J} = \bigcup_{\m,\n} \Big(R_{main} (\m,\n) \bigcup R_{error} (\m,\n) \Big) . 
\ee
We also note that  for each $u_j $ with $ j <J$ we have
\be\label{uj bound}
e^{u_j} \leq 1,
\ee
since $ \u \in  (I^{-}_T)^{J-1}\times I_T $ and $ \v \in  (I_T)^{J}$.

We now write 
\begin{equation}\label{eqn:decompose}\begin{split}
   \frac1T \int_{S_{J,2} (T)} \log \bigg|1 + \sum_{  j < J }  
   \frac{c_j}{c_J}  \frac{L_j }{ L_J} (\sigma+it)  \bigg| dt   
 = & 
 \frac1T \int_{S_{main} (T)} \log  \bigg|1 
 + \sum_{  j < J }  \frac{c_j}{c_J}  \frac{L_j }{ L_J} (\sigma+it)  \bigg| dt  \\
  &+ \frac1T \int_{S_{error} (T)} \log  \bigg|1 
  + \sum_{  j < J }  \frac{c_j}{c_J}  \frac{L_j }{ L_J} (\sigma+it)  \bigg| dt ,
\end{split}\end{equation}
 where
  $$ S_{ main}(T) = \bigcup_{\m,\n} S_{ main} ( \m, \n , T) ,$$
 $$ S_{error}(T) = \bigcup_{\m,\n} S_{error} ( \m, \n , T),$$
 and
$$ S_{main}(\m,\n,T) = \{ t \in [ T, 2T] : \L ( \sigma+it) \in R_{main} (\m,\n) \}, $$
$$ S_{ error}(\m,\n,T) = \{ t \in [ T, 2T] : \L ( \sigma+it) \in R_{error} (\m,\n) \} .$$
  
Here, the $(\m,\n)$-sum and the $(\m,\n)$-union   are over $ 0 \leq m_j \leq 1/\delta -1 $ and $ | n_j | \leq A \sL / \delta$ for $ j \leq J-2$.

\noindent{\bf The main term:} \
For each $\m, \n$, we have
$$  \frac1T \int_{S_{ main} (\m,\n,T)} \log  \bigg|1 + \sum_{   j< J }  \frac{c_j}{c_J}  \frac{L_j }{ L_J} (\sigma+it)  \bigg| dt =  \int_{ R_{main}(\m,\n) }   \log \bigg| 1 + \sum_{ j < J } e^{u_j + i v_j } \bigg|      \;   d\Psi_{ T} ( \u, \v)  ,$$
and we wish to express this in terms of the distribution function $\Psi ( \u, \v) $.
Since each $R_{main}(\m,\n)$ is a union of rectangular regions, we let 
$$R= \prod_{ j \leq 2J} ( a_j, b_j ] $$ 
be one of them and consider
$$ \int_{ R  }   \log \bigg| 1 + \sum_{ j <J} e^{u_j + i v_j } \bigg|    \;     d\Psi_{T} ( \u, \v) .  $$
Our argument is similar to that in   Subsection~\ref{first integral}, but slightly more complicated.
For $\w = ( w_1, \dots, w_{2J})$ define
\begin{equation}\label{h formulas}
\begin{split}
h_0 ( \w )  = & \log \bigg| 1 + \sum_{ j <J} e^{w_j + i w_{J+j} } \bigg| , \\
h_1 ( \w)   = &\  h_0 ( a_1 , w_2, \dots, w_{2J}), \\
h_2 ( \w )   =&\   h_0  ( a_1, a_2 , w_3, \dots, w_{2J}) ,\\
  \vdots  \  &  \\
  h_{2J} (\w )   = &\   h_0 ( a_1 , a_2 , \dots, a_{2J} ) .
\end{split}\end{equation}
Notice that $h_{2J} $ is a constant  function  and
$$   h_0 ( \w) = \sum_{j=0}^{2J-1} \big( h_j ( \w) - h_{j+1} (\w) \big)    + h_{2J} ( \w) .  $$
Thus we have
\begin{equation}\notag
\begin{split}
 \int_R   \log \bigg| 1 +   \sum_{ j <J} e^{u_j + i v_j } \bigg|         d\Psi_{T} ( \u, \v)   
= &\int_R  h_0 ( \w)       d\Psi_{T} ( \w)    \\
= & \int_R \Bigg(  \sum_{j=0}^{2J-1} \big( h_j ( \w) - h_{j+1} (\w) \big)    + h_{2J} ( \w)   \Bigg)  d\Psi_{T} ( \w) \\
= & \sum_{j=0}^{2J-1}  \int_R  \big( h_j ( \w) - h_{j+1} (\w) \big)     d\Psi_{T} ( \w) +   h_{2J} (\a )    \Psi_{T} (R) ,
\end{split}\end{equation}
where $ \a = ( a_1, \dots , a_{2J} )$. 
Now, letting $ \widetilde{\w}_{j+1} = ( w_1 , \dots , w_j , \widetilde{w}_{j+1} , w_{j+2} , \dots, w_{ 2J} ) $, we have
$$ h_j ( \w ) - h_{j+1} ( \w ) =  \int_{ a_{j+1}}^{w_{j+1}}   \frac{ \partial h_j ( \widetilde{\w}_{j+1})}{ \partial \widetilde{w}_{j+1} } d \widetilde{w}_{j+1} .$$
By  \eqref{lower bd 3}, \eqref{uj bound}, and \eqref{h formulas},
we have  
$$
 \frac{ \partial h_j ( \w)}{ \partial {w}_{j+1} }   
 \ll   \bigg|1+\sum_{1\leq j\leq J-1} e^{w_j+iw_{J+j}}\bigg|^{-1} \ll
 \delta^{-1}   
$$
on   $ R= \prod_{ j \leq 2J} ( a_j, b_j ] $.
Thus, if
$$ R_{j+1 } ( \widetilde{w}_{j+1})  := (a_1 ,b_1] \times \cdots \times (a_j, b_j] \times (\widetilde{w}_{j+1} , b_{j+1} ] \times   (a_{j+2} ,b_{j+2} ] \times \cdots \times (a_{2J-2}, b_{2J-2} ] , $$
we see that
\begin{equation}\label{eqn:2}
\begin{split}
   & \int_R  \big( h_j ( \w) - h_{j+1} (\w) \big)     d\Psi_{T} ( \w)  \\
   & =  \int_R  \bigg(\int_{ a_{j+1}}^{w_{j+1}}   \frac{ \partial h_j ( \widetilde{\w}_{j+1})}{ \partial \widetilde{w}_{j+1} } d \widetilde{w}_{j+1}\bigg) d\Psi_{T} ( \w)  \\
   & =   \int_{ a_{j+1}}^{b_{j+1}} \bigg( \int_{R_{j+1 } ( \widetilde{w}_{j+1})}   d\Psi_{T} ( \w)    
   \bigg)\frac{ \partial h_j ( \widetilde{\w}_{j+1})}{ \partial \widetilde{w}_{j+1} } d \widetilde{w}_{j+1}  \\
      & =   \int_{ a_{j+1}}^{b_{j+1}}  \Psi_{T} ( R_{j+1 } ( \widetilde{w}_{j+1})   )    \frac{ \partial h_j ( \widetilde{\w}_{j+1})}{ \partial \widetilde{w}_{j+1} } d \widetilde{w}_{j+1} .
      \end{split}
      \end{equation}
 Now  by Theorem~\ref{lem: discrepancy}, 
$$   \Psi_{T}(R_{j+1 } ( \widetilde{w}_{j+1})  ) - \Psi (R_{j+1 } ( \widetilde{w}_{j+1})  )  
=  O( ( \log T)^{-\sigma} ) . $$ 
Reversing our steps, we see that the last line in \eqref{eqn:2} equals
\begin{equation}\notag
\begin{split}
      & =   \int_{ a_{j+1}}^{b_{j+1}}  \Psi  ( R_{j+1 } ( \tilde{w}_{j+1})   )    
      \frac{ \partial h_j ( \tilde{\w}_{j+1})}{ \partial \tilde{w}_{j+1} } d \tilde{w}_{j+1} + O\left(  \frac{ | b_{j+1}-a_{j+1} |  }{\delta (\log T)^{  \sigma } }      \right) \\
         & =   \int_{ a_{j+1}}^{b_{j+1}} \bigg( \int_{R_{j+1 } ( \tilde{w}_{j+1})}   d\Psi ( \w)   \bigg) \frac{ \partial h_j ( \tilde{\w}_{j+1})}{ \partial \tilde{w}_{j+1} } d \tilde{w}_{j+1} + O\left(   \frac{ \sL   }{\delta (\log T)^{  \sigma } }     \right) \\
            & =  \int_R  \bigg(\int_{ a_{j+1}}^{w_{j+1}}   \frac{ \partial h_j ( \tilde{\w}_{j+1})}{ \partial \tilde{w}_{j+1} } d \tilde{w}_{j+1} \bigg) d\Psi ( \w) + O\left(   \frac{ \sL   }{\delta (\log T)^{  \sigma } }       \right)  \\
       &=   \int_R  \big( h_j ( \w) - h_{j+1} (\w) \big)     d\Psi  ( \w) + O\left(   \frac{ \sL   }{\delta (\log T)^{  \sigma } }       \right) .
\end{split}\end{equation}
Hence,   we obtain
$$ \int_R   \log \bigg| 1 + \sum_{ j <J} e^{u_j + i v_j } \bigg|         d\Psi_{ T} ( \u, \v) = \int_R   h_0 ( \u,\v)     d\Psi  ( \u, \v) +  O\left(   \frac{ \sL  }{\delta (\log T)^{  \sigma } }       \right) . $$ 
Since $R_{main}(\m,\n) $ is a union of at most $O(\sL)$  rectangles $R$, we have
\begin{equation}\label{eqn:decompose main}
  \int_{ R_{main}(\m,\n) }   \log \bigg| 1 + \sum_{ j \leq J -1} e^{u_j + i v_j } \bigg|         d\Psi_{T} ( \u, \v) =  \int_{ R_{main}(\m,\n) }  h_0 ( \u,\v)        d\Psi ( \u, \v)  +  O\left(  \frac{ \sL^2  }{\delta (\log T)^{  \sigma } }    \right) .
  \end{equation}

\noindent{\bf The error term:} \
 We bound
 $$ \frac1T \int_{S_{error} (T)} \log \bigg|1 + \sum_{ j < J }  \frac{c_j}{c_J}  \frac{L_j }{ L_J} (\sigma+it) \bigg| dt  
 $$
 by using  the Cauchy-Schwarz inequality and  showing that ${\rm meas}(S_{ error}(T))$ is small.  

If $ t \in S_{error}(T)$, then $\L( \sigma+it) \in R_{error}(\m,\n)$ for some $\m,\n$. Thus, by 
\eqref{upper bd 3}
$$ \log  \bigg|1 + \sum_{   j < J }  \frac{c_j}{c_J}  \frac{L_j }{ L_J} (\sigma+it) \bigg| \leq  \log (23J\delta)= -c_0 \sL + \log (23J). $$
By Lemma \ref{lem: 2kth mmt real} 
\begin{equation*}\begin{split}
& \frac1T \int_T^{2T} \bigg|  \log \bigg|1 + \sum_{   j < J }  \frac{c_j}{c_J}  \frac{L_j }{ L_J} (\sigma+it) \bigg| \bigg|^{2k} dt \\
& \leq  4^k \frac1T \int_T^{2T} \bigg|  \log \bigg|  \sum_{   j \leq J }   c_j   L_j   (\sigma+it) \bigg| \bigg|^{2k} dt + 4^k  \frac1T \int_T^{2T} \bigg|  \log \bigg| c_J  L_J  (\sigma+it) \bigg| \bigg|^{2k} dt \\
& \ll (Ck)^{4k}.
\end{split}\end{equation*}
Hence, 
 \begin{align*}
{\rm meas} (S_{error}(T))\leq& \int_{S_{ error}(T)} \Bigg(\frac{ \log  \big|1 + \sum_{  j < J }  \frac{c_j}{c_J}  \frac{L_j }{ L_J} (\sigma+it) \big|}{c_0 \sL}\Bigg)^{2k} dt \\
\ll &  T \bigg(\frac{C^2 k^2}{c_0\sL}\bigg)^{2k} .
 \end{align*} 
Taking $ k = a \sqrt{ \sL} $ for  any $0< a<\sqrt{c_0}/C $, we see that  
 \begin{equation}\label{eqn:decompose error}
\frac1T | S_{ error}(T) | \ll   e^{- 2b \sqrt{\sL}}
\end{equation}
with $ b =  2a  \log ( \sqrt{c_0}/aC ) >0 $.  
It now follows  by the Cauchy-Schwarz inequality and Lemma~\ref{lem: 2kth mmt real} 
(with $k=1$) that
\begin{align*}
\frac{1}{T}  \Bigg|\int_{S_{ error}(T)}& \log  \bigg| 1 + \sum_{  j < J }  \frac{c_j}{c_J} 
   \frac{{L}_j }{ L_J} (\sigma+it)   \bigg| dt  \Bigg| \\
   \leq&\bigg(\frac1T {\rm meas(S_{  error}(T))}\bigg)^{\frac12}
  \Bigg( \frac{1}{T}  \int_{T}^{2T} \bigg( \log  \bigg| 1 + \sum_{   j < J }  \frac{c_j}{c_J} 
   \frac{{L}_j }{ L_J} (\sigma+it)   \bigg|\bigg)^2 dt   \Bigg)^{\frac12}\\
   \ll& e^{-b\sqrt{\sL}}.
\end{align*}

By \eqref{eqn:decompose}, \eqref{eqn:decompose main}, and \eqref{eqn:decompose error} we now see that
\begin{equation}\label{eqn:me1} \begin{split}
   \frac1T \int_{S_{J,2} (T)}  \log& \bigg|1 + \sum_{   j < J }  \frac{c_j}{c_J} 
   \frac{{L}_j }{ L_J } (\sigma+it) \bigg| dt  \\
 = & \sum_{ \m,\n}  \int_{ R_{main}(\m,\n) }  h_0 ( \u,\v)        d\Psi  ( \u, \v) 
  +  O\left(   \frac{\sL^{J+1}  }{  (\log T)^{  \sigma - (2J-3)c_0 } }    \right)   + O(  e^{- b \sqrt{\sL}})  \\
   = & \sum_{ \m,\n}  \int_{ R_{main}(\m,\n) }  h_0 ( \u,\v)        d\Psi ( \u, \v) 
    + O(  e^{- b \sqrt{\sL}})  .
\end{split}\end{equation}

Next we   show that
\begin{equation}\label{error integral}
    \int_{ R_{error}  }  h_0 ( \u,\v)        d\Psi ( \u, \v) =O(  e^{- b \sqrt{\sL}})  ,
\end{equation}
where
\begin{equation}\label{eqn:error}
 R_{error} = \bigcup_{\m,\n} R_{error}(\m,\n). 
 \end{equation}
It will then follow from \eqref{set decomp}  and \eqref{eqn:me1} -- \eqref{eqn:error} that 
 \begin{equation}\label{S12T}\begin{split}
\frac1T  \int_{S_{J,2} (T)} \log \bigg|1 + \sum_{   j < J }  \frac{c_j}{c_J} 
 \frac{L_j }{ L_J} (\s+it) \bigg| dt  
 = & \int_{ (I^{-}_T)^{J-1} \times I_T  \times (I_T)^J }   h_0 (\u, \v)     d\Psi ( \u, \v) \\
 & + O(  e^{- b \sqrt{\sL}}).
\end{split}\end{equation}
 
 To prove \eqref{error integral}, we first note  that by H\"older's inequality
\begin{align*}
  \left|   \int_{ R_{error}  }  h_0 ( \u,\v)        d\Psi ( \u, \v) \right|  
   \leq \left|  \int_{ R_{error}  }  |h_0 ( \u,\v)|^{2k}        d\Psi ( \u, \v) \right|^{1/2k} \left|  \int_{ R_{error} }         d\Psi ( \u, \v) \right|^{1-1/2k}. 
 \end{align*}
By  Lemma~\ref{2kth mmt random}, the first integral on the right is
\begin{align}\label{h_0 Rerror 2k}
  \left|   \int_{ R_{error}  }  |h_0 ( \u,\v)|^{2k}   d\Psi  ( \u, \v) \right| 
  & \leq  \E \Bigg[  \bigg|  \log \bigg|1 + \sum_{  j < J }  \frac{c_j}{c_J}  \frac{L_j }{ L_J} (\sigma,X) \bigg|\, \bigg|^{2k}    \Bigg] \\
   & \leq 4^k \Bigg( \E \Big[  \big|  \log \big|c_J L_J (\sigma,X) \big| \big|^{2k}    \Big] 
   + \E \Bigg[  \bigg|  \log \bigg| \sum_{  j \leq J }   c_j  L_j  (\sigma,X) \bigg| \,\bigg|^{2k} 
      \Bigg] \Bigg)  \notag   \\
   & \ll ( Ck)^{2k}.\notag
 \end{align}
Hence
\begin{align}\label{h_0 Rerror}
  \left|   \int_{ R_{error}  }  h_0 ( \u,\v)        d\Psi  ( \u, \v) \right|  
   \ll  k  \left|  \int_{ R_{error} }         d\Psi  ( \u, \v) \right|^{1-1/2k}. 
 \end{align}
By \eqref{upper bd 3} we see that for $ (\u, \v) \in R_{error}$,  
 $$h_0 ( \u, \v ) 
 = \log \bigg| 1 +f \sum_{ j <J} e^{u_j + i v_{j} } \bigg|
 \leq \log (23J\delta)  \leq -c_0 \sL+ \log ( 23 J).$$
Thus
$$ | h_0 ( \u,\v) | \geq c_0 \sL - \log ( 23 J) ,$$
and we obtain 
\begin{align*}
0 \leq \Psi(R_{error}) =  \int_{ R_{error} }         d\Psi  ( \u, \v) 
 &  \leq  \int_{R_{error}}  \frac{  | h_0 ( \u,\v) |^{2k}}{ (c_0 \sL - \log ( 23 J ) )^{2k}}\, d\Psi  ( \u, \v)  \\
&  \ll \left(   \frac{ Ck}{ c_0 \sL - \log ( 23 J) } \right)^{2k},
\end{align*}
by \eqref{h_0 Rerror 2k}.
Using this in \eqref{h_0 Rerror}, we obtain
\begin{align*}
  \left|   \int_{ R_{error}  }  h_0 ( \u,\v)        d\Psi ( \u, \v) \right|  
   \ll  \sL \left(   \frac{ Ck}{ c_0 \sL - \log ( 23 J) } \right)^{2k} . 
 \end{align*}
We now take $k = (c_0 \sL - \log (23J) )/(eC) $ and find that
\begin{align*}
  \left|   \int_{ R_{error}  }  h_0 ( \u,\v)        d\Psi ( \u, \v) \right|  
   \ll  \sL e^{- 2(c_0  / eC) \sL } \ll e^{-b \sqrt{\sL} } . 
 \end{align*}
This proves \eqref{error integral} and thus \eqref{S12T}.

\subsection{Completion of the proof of Theorem~\ref{main thrm} }

By \eqref{eqn: E int split}, \eqref{S1 int 2}, and  \eqref{S12T}, 
\begin{equation}\notag
\begin{split}
 \frac 1T \int_{S_J (T)} \log | E( \sigma + it, Q) | dt 
   =& \int_{  (I^- _T )^{J-1} \times I_T \times (I_T)^J } 
 \bigg( u_J + \log \bigg| 1+ \sum_{j<J} e^{u_j + iv_J } \bigg| \bigg) d \Psi (\u, \v) \\
 &+ O( e^{-b \sqrt{ \sL}} ) \\
   =& \E \bigg[   {\bold 1}_{\mathscr I_{J,T}}\cdot \bigg(  \log| c_J L_J ( \sigma ,X ) | + \log \bigg| 1 + \sum_{j < J } \frac{ c_jL_j}{c_J L_J} ( \sigma ,X ) \bigg|  \bigg) \bigg] \\
 &+ O( e^{-b \sqrt{ \sL}} ) \\ 
 & = \E \bigg[   {\bold 1}_{\mathscr I_{J,T}}\cdot  \log \bigg|   \sum_{j \leq J }   c_jL_j  ( \sigma ,X) \bigg| \bigg]+ O( e^{-b \sqrt{ \sL}} ),
 \end{split}\end{equation}
where $ \mathscr I _{J,T} $ is the event
$$   \L ( \sigma ,X ) \in   (I^- _T )^{J-1} \times I_T \times (I_T)^J , $$
and $   {\bold 1}_{\mathscr I_{J,T}} $ is its characteristic function. By Lemma \ref{2kth mmt random} 
$$ \E \bigg[   {\bold 1}_{\mathscr I_{J,T}}\cdot  \log \bigg|   \sum_{j \leq J }   c_jL_j  ( \sigma ,X) \bigg| \bigg]  + O( e^{-b \sqrt{ \sL}} )  = \E \bigg[   {\bold 1}_{\mathscr I_J }\cdot  \log \bigg|   \sum_{j \leq J }   c_jL_j  ( \sigma ,X) \bigg| \bigg]  + O( e^{-b \sqrt{ \sL}} ) , $$
where
 $ \mathscr I _{J} $ is the event
$$   \L ( \sigma ,X ) \in   (-\infty, 0]^{J-1} \times \mathbb R^{J+1} ,$$
and $   {\bold 1}_{\mathscr I_J } $ is its characteristic function. Hence $ \mathscr I_J $ is the event
$$ | L_J ( \sigma , X ) | = \max_{j }  | L_j ( \sigma , X)| . $$
Reversing all the steps in \eqref{log E est 1},    we therefore obtain
\begin{align}\label{log E int}
  \frac1T \int_T^{2T} \log |E (\sigma+it, Q)| dt  
 = &  \sum_{  j \leq J} \frac1T \int_{S_j(T)} \log |E(\sigma+it, Q)| dt + O \bigg( \frac{ \sL^2}{ ( \log T)^\s } \bigg) \notag \\
=  &  \sum_{  j \leq J}   \E \bigg[   {\bold 1}_{\mathscr I_j }\cdot  \log \bigg|   \sum_{j \leq J }   c_jL_j  ( \sigma ,X) \bigg| \bigg]  + O( e^{-b \sqrt{ \sL}} )  \\
= & \mathcal{M}(\sigma) + O( e^{- b \sqrt{ \sL }}),\notag
\end{align}
where
$$ \mathcal{M} (\sigma) = \E \bigg[   \log \bigg| \sum_{   j \leq J }   c_j  L_j (\sigma , X ) \bigg|   \bigg] .$$
 In~\cite{Le2} the second author proved that $\mathcal{M} (\sigma)$ is twice differentiable. We use this   to show that 
 \begin{equation}\label{eqn: zero density}
  N_E ( \sigma_1, \sigma_2 ; T) = - \frac{T}{2 \pi} ( \mathcal{M}'(\sigma_1) - \mathcal{M}' ( \sigma_2 ) ) +   O( e^{-  ({ b}/{2}) \sqrt{ \sL}}). 
\end{equation}

Applying  Littlewood's lemma in a standard way, we find that
$$ 
 \int_{\sigma}^{\sigma_0} 
\bigg(\sum_{ \substack{ \beta > u \\ T \leq \gamma \leq 2T }}  1 \bigg) du 
= \frac{1}{2 \pi} \int_T^{2T} \log |   E( \sigma+it, Q) | dt
-\frac{1}{2 \pi} \int_T^{2T} \log |   E( \sigma_0+it, Q) | dt + O( \log T)  ,
$$
where $ \sigma_0 $ is a large but fixed real number such that $E(s,Q)$ has no zero in $ \Re s \geq \sigma_0 $. By \eqref{log E int} 
$$  \int_{\sigma}^{\sigma_0} 
\bigg(\sum_{ \substack{ \beta > u \\ T \leq \gamma \leq 2T }}  1 \bigg)  du 
= \frac{T}{2 \pi}\mathcal{M} (\sigma) 
- \frac{1}{2 \pi} \int_T^{2T} \log |   E( \sigma_0+it, Q) | dt
  + O(T e^{- b \sqrt{ \sL }}). $$
Differencing this at $\s$ and $\s+h$,  with $h>0$ small, we deduce that
\begin{align}  
 \frac1h \int_{\sigma}^{\sigma+h} 
\bigg(\sum_{ \substack{ \beta > u \\ T \leq \gamma \leq 2T }}  1 \bigg)  du 
=\frac{T}{2 \pi}\frac{\mathcal{M} (\sigma) -\mathcal{M} (\sigma+h) }{h}
  + O\Big(\frac{T}{h} e^{- b \sqrt{ \sL }}\Big). 
\end{align}   
Since $\mathcal{M} (\sigma)$ is twice differentiable, we may write this as 
$$
\frac1h \int_{\sigma}^{\sigma+h} 
\bigg(\sum_{ \substack{ \beta > u \\ T \leq \gamma \leq 2T }}  1 \bigg)  du =
-\frac{T}{2 \pi} \mathcal{M}' (\sigma)  
  + O\Big(hT+\frac{T}{h} e^{- b \sqrt{ \sL }}\Big). 
$$
The integrand is a non increasing function of $u$, so 
 \begin{align*}
 \sum_{ \substack{ \beta >\sigma+h  \\ T \leq \gamma \leq 2T }}  1   
 & \leq  - \frac{T}{2 \pi}  \mathcal{M}'(\sigma ) 
 +  O\Big(hT+\frac{T}{h} e^{- b \sqrt{ \sL }}\Big)
 \leq  \sum_{ \substack{ \beta >\sigma  \\ T \leq \gamma \leq 2T }}  1  . 
   \end{align*}
On the left-hand side  we replace $\s$ by $\s-h$ and use 
$\mathcal{M}'(\sigma-h ) =\mathcal{M}'(\sigma ) +O(h)$. We then find that
 \begin{align*}
 \sum_{ \substack{ \beta >\sigma   \\ T \leq \gamma \leq 2T }}  1   
 & =  - \frac{T}{2 \pi}  \mathcal{M}'(\sigma ) 
 +  O\Big(hT+\frac{T}{h} e^{- b \sqrt{ \sL }}\Big) . 
   \end{align*}
Taking $h= e^{- \frac{ b}{2} \sqrt{ \sL}}$, we obtain
 $$  \sum_{ \substack{ \beta> \sigma  \\ T \leq \gamma \leq 2T }}  1 
 =  - \frac{T}{2 \pi}   \mathcal{M}'(\sigma)     + O(Te^{- (b/2)  \sqrt{ \sL} }).
 $$
 Equation  \eqref{eqn: zero density} follows easily from this.


\noindent Department of Mathematics, University of Rochester, Rochester, NY 14627, USA\\
\textsl{E-mail address} : Steven Gonek (gonek@math.rochester.edu)

\noindent Department of Mathematics, Incheon National University, \\
119 Academy-ro, Yeonsu-gu, Incheon, 22012, Korea\\
\textsl{E-mail address} : Yoonbok Lee (leeyb@inu.ac.kr, leeyb131@gmail.com)


\begin{thebibliography}{15}


\bibitem{BJ} V. Borchsenius and B. Jessen, \emph{Mean motions and values of the Riemann zeta function}, Acta Math. {\bf 80} (1948), 97--166.

 
\bibitem{DH} H. Davenport and H. Heilbronn, \emph{On the zeros of certain Dirichlet series}, J. Lond. Math. Soc. {\bf 11} (1936), 181--185, 307--312.


\bibitem{IK} H. Iwaniec and E. Kowalski, \emph{Analytic number theory}, American Mathematical Society Colloquium Publications, 53. American Mathematical Society, Providence, RI, 2004. 

 \bibitem{KV} A.A. Karatsuba and S.M. Voronin, \emph{The Riemann Zeta-Function}, Berlin, New York: Walter de Gruyter, 1992.


\bibitem{Le} Y. Lee, \emph{The universality theorem for Hecke $L$-functions}, Math. Z. {\bf 271} (2012), 893--909.

\bibitem{Le1} Y. Lee, \emph{Zero-density estimates and fractional moments of Hecke $L$-functions}, J. of Number Theory, {\bf 141} (2014), 225--241.

\bibitem{Le2} Y. Lee, \emph{On the zeros of Epstein zeta functions}, Forum Math. {\bf 26} (2014), 1807--1836.

\bibitem{LLR} Y. Lamzouri, S. Lester and M. Radziwi\l\l, \emph{Discrepancy bounds for the distribution of the Riemann zeta-function and applications}, preprint.
 %
%
%
 \bibitem{T} E.C. Titchmarsh, \emph{The theory of the Riemann zeta-function}, 2nd ed., revised by D.R. Heath-brown, Oxford University Press, Oxford, 1986.
%

\bibitem{Ts} K.M. Tsang, \emph{The Distribution of the Values of the Riemann Zeta-Function}, Ph. D. Thesis, Princeton University, 1984.
%

\bibitem{V} S.M. Voronin, \emph{The zeros of zeta-functions of quadratic forms (in Russian)}, Tr. Mat. Inst. Steklova {\bf 142} (1976), 135--147.
 
\end{thebibliography}
\end{document}